\title{Lasserre Hierarchy for Graph Isomorphism and Homomorphism Indistinguishability}
\Crefname{claim}{Claim}{Claims}
\Crefname{fact}{Fact}{Facts}
\Crefname{observation}{Observation}{Observations}
\newcommand{\abs}[1]{\left| #1 \right|}
\newcommand{\down}[1]{\soe(#1)} 
\renewcommand\phi\varphi
\renewcommand\epsilon\varepsilon
\DeclareMathOperator{\soe}{soe}
\DeclareMathOperator{\pw}{pw}
\DeclareMathOperator{\rel}{atp}
\DeclareMathOperator{\id}{id}
\DeclareMathOperator{\tr}{tr}
\DeclareMathOperator{\tw}{tw}
\DeclareMathOperator{\ISO}{ISO}
\newcommand\multiset[1]{\left\{ \!\! \left\{ #1 \right\} \!\! \right\}}
\begin{document}
	\maketitle

	\begin{abstract}
    We show that feasibility of the $t^\text{th}$ level of the Lasserre semidefinite programming hierarchy for graph isomorphism can be expressed as a homomorphism indistinguishability relation. In other words, we define a class $\mathcal{L}_t$ of graphs such that graphs $G$ and $H$ are not distinguished by the $t^\text{th}$ level of the Lasserre hierarchy if and only if they admit the same number of homomorphisms from any graph in $\mathcal{L}_t$.  By analysing the treewidth of graphs in $\mathcal{L}_t$, we prove that the $3t^\text{th}$ level of Sherali--Adams linear programming hierarchy is as strong as the $t^\text{th}$ level of Lasserre. Moreover, we show that this is best possible in the sense that $3t$ cannot be lowered to $3t-1$ for any $t$. The same result holds for the Lasserre hierarchy with non-negativity constraints, which we similarly characterise in terms of homomorphism indistinguishability over a family $\mathcal{L}_t^+$ of graphs.  Additionally, we give characterisations of level-$t$ Lasserre with non-negativity constraints in terms of logical equivalence and via a graph colouring algorithm akin to the Weisfeiler--Leman algorithm. This provides a polynomial time algorithm for determining if two given graphs are distinguished by the $t^\text{th}$ level of the Lasserre hierarchy with non-negativity constraints.
 \end{abstract}

    \section{Introduction}

    The aim of this paper is to relate two rich sets of tools used to distinguish non-isomorphic graphs: the Lasserre semidefinite programming hierarchy and homomorphism indistinguishability.
    
    Distinguishing non-isomorphic graphs is a ubiquitous problem in the theoretical and practical study of graphs. The ability of certain graph invariants to distinguish graphs has long been a rich area of study, leading to fundamental questions such as the long-standing open problem of whether almost all graphs are determined by their spectrum \cite{van_dam_which_2003}. 
    In practice, deploying e.g.\  machine learning architectures powerful enough to distinguish graphs with different features is of great importance \cite{grohe_word2vec_2020}. 
    This motivates an in-depth study of the power of various graph invariants and tools used to distinguish graphs.

    Among such techniques is the Lasserre semidefinite programming hierarchy \cite{lasserre_global_2001} which can be used to relax the  integer quadratic program for graph isomorphism $\ISO(G, H)$, cf.\  \cref{sec:systems}.
    This yields a sequence of semidefinite programs, i.e.\  the level-$t$ Lasserre relaxation of $\ISO(G, H)$ for $t\geq 1$, which are infeasible for more and more non-isomorphic graphs as $t$ grows. 
    In~\cite{snook_graph_2014,odonnell_hardness_2014,berkholz_limitations_2015}, it was shown that in general only the level-$\Omega(n)$ Lasserre system of equations can distinguish all non-isomorphic $n$-vertex graphs. 
    In~\cite{atserias_definable_2023}, the Lasserre hierarchy was compared with the Sherali--Adams linear programming hierarchy \cite{sherali_hierarchy_1990}, which is closely related to the Weisfeiler--Leman algorithm \cite{weisfeiler_construction_1976,atserias_sheraliadams_2013,grohe_pebble_2015,malkin_sheraliadams_2014}, the arguably most relevant combinatorial method for distinguishing graphs. 
    In general and not only for graph isomorphism, feasibility of the level-$t$ Lasserre relaxation of an integer program implies feasibility of its level-$t$ Sherali--Adams relaxation \cite{laurent_comparison_2003}.
    For graph isomorphism, it was shown in \cite{atserias_definable_2023} that the converse holds up to multiplicative offset in the number of levels.
    Thus, perhaps surprisingly, the Lasserre hierarchy is not more powerful than the Sherali--Adams hierarchies when applied to $\ISO(G, H)$.
    More precisely, by \cite[Corollary~6.7]{atserias_definable_2023},
    there exists a constant $c$ such that if the level-$ct$ Sherali--Adams relaxation of $\ISO(G, H)$  is feasible for two graphs $G$ and $H$ then so is the level-$t$ Lasserre relaxation.
    
    Another set of expressive equivalence relations comparing graphs is given by homomorphism indistinguishability, a notion originating from the study of graph substructure counts.
    Two graphs $G$ and $H$ are \emph{homomorphism indistinguishable} over a family of graphs $\mathcal{F}$, in symbols $G \equiv_{\mathcal{F}} H$, if the number of homomorphisms from $F$ to $G$ is equal to the number of homomorphisms from $F$ to $H$ for every graph $F \in \mathcal{F}$. The study of this notion began in 1967, when Lovász~\cite{Lovasz67} showed that two graphs $G$ and $H$ are isomorphic if and only if they are homomorphism indistinguishable over all graphs.
    In recent years, many prominent equivalence relations comparing graphs were characterised as homomorphism indistinguishability relations over restricted graph classes \cite{dell_lovasz_2018,dvorak_recognizing_2010,grohe_counting_2020,dawar_lovasz_2021,mancinska_quantum_2019,grohe_homomorphism_2022,atserias_expressive_2021,montacute_pebble_2022,abramsky_discrete_2022,roberson_oddomorphisms_2022,rattan_weisfeiler_2023}. For example, a folklore result asserts that two graphs have cospectral adjacency matrices iff they are homomorphism indistinguishable over all cycle graphs, cf.\  \cite{grohe_homomorphism_2022}. Two graphs are quantum isomorphic iff they are homomorphism indistinguishable over all planar graphs \cite{mancinska_quantum_2019}. 
    Furthermore, feasibility of the level-$t$ Sherali--Adams relaxation of $\ISO(G, H)$ has been characterised as homomorphism indistinguishability over all graphs of treewidth at most $t-1$ \cite{atserias_sheraliadams_2013,grohe_pebble_2015,malkin_sheraliadams_2014}. 
    In this way, notions from logic \cite{dvorak_recognizing_2010,grohe_counting_2020,rattan_weisfeiler_2023}, category theory \cite{dawar_lovasz_2021,montacute_pebble_2022,abramsky_discrete_2022}, algebraic graph theory \cite{dell_lovasz_2018,grohe_homomorphism_2022}, and quantum groups \cite{mancinska_quantum_2019} have been related to homomorphism indistinguishability.

    \subsection{Contributions}

    Although feasibility of the level-$t$ Lasserre relaxation of $\ISO(G, H)$ was sandwiched between feasibility of the level-$ct$ and level-$t$ Sherali--Adams relaxation in \cite{atserias_definable_2023},  
    the constant $c$ remained unknown. 
    In fact, this $c$ is not explicit and depends on the implementation details of an algorithm developed in that paper.
	Our main result asserts that $c$ can be taken to be three and that this constant is best possible.\footnote{The constant $c$ in \cite[Theorem~6.3]{atserias_definable_2023} depends on the implementation details of the algorithm that yields their Corollary~5.1. This algorithm is also dependent on the precise version of the Lasserre system of equations used there. 
    As discussed in \cref{sec:systems,app:lasserre}, our Lasserre system of equations is defined slightly differently.
    In \cref{thm:main}, we abstract from these details by proving a statement that involves only the equivalence relations $\simeq_t^{\textup{SA}}$ and $\simeq_t^{\textup{L}}$. 
    Since our Lasserre formulation and the one in \cite{atserias_definable_2023} are equivalent (\cref{lem:systems}), \cref{thm:main} yields that $c$ in \cite[Theorem~6.3]{atserias_definable_2023} can be taken to be three (and that this is best possible). Our results do not imply bounds on the complexity of the algorithm yielding \cite[Corollary~5.1]{atserias_definable_2023}.}

	\begin{theorem} \label{thm:main}
		For two graphs $G$ and $H$ and every $t \geq 1$, the following implications hold:
		\[
		G \simeq_{3t}^{\textup{SA}} H 
		\implies
		G \simeq_t^{\textup{L}} H
		\implies
		G \simeq_{t}^{\textup{SA}} H
		\]
        Furthermore, for every $t \geq 1$, there exist graphs $G$ and $H$ such that $G \simeq_{3t-1}^{\textup{SA}} H$ and $G \not\simeq_t^{\textup{L}} H$.
	\end{theorem}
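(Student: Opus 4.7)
The overall plan is to first establish the homomorphism indistinguishability characterization $G \simeq_t^{\textup{L}} H \iff G \equiv_{\mathcal{L}_t} H$ announced in the abstract, and then to prove tight treewidth bounds for graphs in $\mathcal{L}_t$. Combined with the known Sherali--Adams characterization, by which $G \simeq_t^{\textup{SA}} H$ iff $G$ and $H$ are homomorphism indistinguishable over the class of graphs of treewidth at most $t-1$, both implications in \Cref{thm:main} reduce to containments between graph classes parametrised by treewidth.

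For the forward implication $G \simeq_{3t}^{\textup{SA}} H \implies G \simeq_t^{\textup{L}} H$, I would establish that every $F \in \mathcal{L}_t$ satisfies $\tw(F) \leq 3t-1$. The graphs in $\mathcal{L}_t$ should arise from the combinatorial structure of the Lasserre moment matrix, whose rows and columns are indexed by $t$-tuples of vertices; a natural tree decomposition of a witness graph would then cover each vertex or edge using three such $t$-tuples (two indices plus a joining index), giving bags of size at most $3t$. For the second implication $G \simeq_t^{\textup{L}} H \implies G \simeq_t^{\textup{SA}} H$, I would show that $\mathcal{L}_t$ contains all graphs of treewidth at most $t-1$, so that $\equiv_{\mathcal{L}_t}$ refines homomorphism indistinguishability over treewidth-$(t-1)$ graphs; alternatively, this follows abstractly from the classical fact that Lasserre dominates Sherali--Adams at the same level \cite{laurent_comparison_2003}.

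For the tightness statement, the goal is to exhibit, for every $t \geq 1$, a graph $F_t \in \mathcal{L}_t$ with $\tw(F_t) = 3t-1$ together with a pair $(G_t, H_t)$ such that $\hom(F_t, G_t) \neq \hom(F_t, H_t)$ while $\hom(F, G_t) = \hom(F, H_t)$ for every $F$ of treewidth at most $3t-2$. A natural candidate for $F_t$ is a member of $\mathcal{L}_t$ whose minimum tree decomposition has bags of size exactly $3t$, likely built around a clique on $3t$ vertices or a similar dense gadget arising naturally in the moment-matrix description. The graph pair $(G_t, H_t)$ could then be produced by a Cai--Fürer--Immerman-style construction tailored to the treewidth level $3t-1$, adapting the techniques of \cite{atserias_definable_2018} or invoking known separations on the Sherali--Adams side \cite{grohe_pebble_2015}.

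The main obstacles I foresee are twofold. First, pinning down a precise enough combinatorial description of the generators of $\mathcal{L}_t$ to prove the exact upper bound $\tw(F) \leq 3t-1$ and to show it is achieved; the constant three is delicate and will only emerge from a careful analysis of how Lasserre pseudo-expectations interact in three-fold products. Second, the tightness part requires not merely realising treewidth $3t-1$ inside $\mathcal{L}_t$, but also ruling out that any graph of treewidth $3t-2$ distinguishes the constructed pair $(G_t, H_t)$; this will most likely demand a CFI-type gadget calibrated specifically to $F_t$, whose properties must be verified both via the Lasserre characterisation and via the classical Sherali--Adams treewidth characterisation.
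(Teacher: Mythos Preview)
Your plan matches the paper's approach for both implications: they follow from the characterisation $G \simeq_t^{\textup{L}} H \iff G \equiv_{\mathcal{L}_t} H$ together with the inclusions $\mathcal{TW}_{t-1} \subseteq \mathcal{L}_t \subseteq \mathcal{TW}_{3t-1}$, the upper bound being proved by structural induction (each series or parallel composition keeps all labelled vertices in a single bag of size at most $3t$). Your candidate $F_t = K_{3t}$ is exactly the paper's witness; the paper exhibits $K_{3t} \in \mathcal{L}_t$ via an explicit construction $\boldsymbol{E} \odot (\boldsymbol{E} \cdot \boldsymbol{E})$ where $\boldsymbol{E}$ is the atomic clique on the $2t$ labels.

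For the tightness statement, however, the paper does not build a bespoke CFI gadget calibrated to $K_{3t}$. Instead it invokes a result of Neuen that the class $\mathcal{TW}_{3t-2}$ is \emph{homomorphism distinguishing closed}: for every graph $F \notin \mathcal{TW}_{3t-2}$ there exist $G$ and $H$ with $G \equiv_{\mathcal{TW}_{3t-2}} H$ but $\hom(F,G) \neq \hom(F,H)$. Applying this with $F = K_{3t}$ immediately gives the required pair. This black-boxes precisely the obstacle you flagged; CFI-style techniques (from \cite{dawar_power_2007,roberson_oddomorphisms_2022}) do underlie Neuen's proof, but you need not engineer a gadget specific to $K_{3t}$ yourself. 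Without this lemma your proposed route would face the genuine difficulty that standard CFI pairs at level $3t-1$ are merely non-isomorphic, which does not by itself force $\hom(K_{3t},\cdot)$ to differ.
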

    Here,  $G \simeq_t^{\textup{L}} H$ and $G \simeq_t^{\textup{SA}} H$ denote that the level-$t$ Lasserre relaxation and respectively the level-$t$ Sherali--Adams relaxation of $\ISO(G, H)$ are feasible.
    
	\Cref{thm:main} is proven using the framework of homomorphism indistinguishability. 
    In previous works \cite{dell_lovasz_2018,mancinska_graph_2020,grohe_homomorphism_2022,rattan_weisfeiler_2023}, 
    the feasibility of various systems of equations associated to graphs like the Sherali--Adams relaxation of $\ISO(G, H)$ was characterised in terms of homomorphism indistinguishability over certain graph classes. 
    We continue this line of research by characterising the feasibility of the level-$t$ Lasserre relaxation of $\ISO(G, H)$ by homomorphism indistinguishability of $G$ and $H$ over the novel class of graphs~$\mathcal{L}_t$
    introduced in \cref{def:l-lplus}.
	
	\begin{theorem} \label{thm:main2}
		For every integer $t \geq 1$, there is a minor-closed graph class $\mathcal{L}_t$ of graphs of treewidth at most $3t-1$ such that for all graphs $G$ and $H$ it holds that $G \simeq_t^{\textup{L}} H $ if and only if $G \equiv_{\mathcal{L}_t} H$.
	\end{theorem}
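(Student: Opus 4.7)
The plan is to express feasibility of the level-$t$ Lasserre SDP for $\ISO(G,H)$ as a positive semidefinite linear functional on a finite-dimensional polynomial algebra and then identify the relevant ``test polynomials'' with a minor-closed graph class $\mathcal{L}_t$ of bounded treewidth, building on the template of prior characterisations such as Sherali--Adams-versus-treewidth \cite{atserias_sheraliadams_2013,grohe_pebble_2015,dvorak_recognizing_2010}.

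I would begin by reformulating feasibility in standard moment form: the level-$t$ Lasserre system admits a solution iff there exists a linear functional $L$ on polynomials in the variables $\{x_{ij}\}_{i \in V(G), j \in V(H)}$ of total degree at most $2t$ that vanishes on the ideal generated by the $\ISO(G,H)$ constraints and satisfies $L(p^2)\geq 0$ for every $p$ of degree $\leq t$. Cholesky-factoring the corresponding moment matrix yields a family of vectors $v_\alpha$ indexed by degree-$\leq t$ monomials $\alpha$ whose Gram matrix encodes the moments. Because the $\ISO$ ideal is generated by finitely many binomial-type relations, the quotient algebra is finite-dimensional and the moment matrix has combinatorial entries counting joint occurrences of vertex assignments.

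Next, I would set $\mathcal{L}_t$ to be the minor closure of graphs $F_{\alpha,\beta}$ obtained by taking the ``variable gadgets'' corresponding to two degree-$\leq t$ monomials $\alpha,\beta$ and gluing them along a shared set of size $\leq t$ encoding the product $\alpha\beta$ after application of the $\ISO(G,H)$ constraints. One then verifies that $\hom(F_{\alpha,\beta}, G)$ coincides, up to a normalisation, with a canonical $(\alpha,\beta)$-entry of a moment matrix $M_G$ built directly from $G$. The equivalence $G \simeq_t^{\textup{L}} H \iff G \equiv_{\mathcal{L}_t} H$ then follows by a convex-duality argument in the spirit of \cite{mancinska_quantum_2019,grohe_homomorphism_2022}: coincidence of hom counts over $\mathcal{L}_t$ ensures $M_G = M_H$, which witnesses feasibility of both tautological solutions simultaneously; conversely, if the SDP is infeasible, a Farkas-type separating functional on the cone of PSD moment matrices yields an explicit non-negative combination of $\hom(F,\cdot)$, $F \in \mathcal{L}_t$, distinguishing $G$ from $H$.

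Finally, for the treewidth bound I would exhibit on each generator $F_{\alpha,\beta}$ a tree decomposition whose bags contain at most $t$ vertices from the $\alpha$ side, $t$ from the $\beta$ side, and $t$ from the shared overlap, hence size $\leq 3t$; since treewidth is minor-monotone, the whole class $\mathcal{L}_t$ has treewidth $\leq 3t-1$, and minor-closedness holds by fiat. The main obstacle is arranging the gluing so that the overlap is large enough to transport the algebraic information between the two factors but small enough to keep each bag at $3t$ rather than the $4t$ that a naive concatenation would produce. Tightening from the easy $4t$ to the optimal $3t-1$ (which the second half of \cref{thm:main} shows cannot be improved) will require exploiting the idempotence $x_{ij}^2 = x_{ij}$ and the doubly stochastic row/column relations of $\ISO(G,H)$ to identify duplicated variables across the two monomials before gluing, so that the separator is a genuine $t$-vertex overlap rather than a disjoint $2t$-vertex cut.
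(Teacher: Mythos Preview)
Your outline captures the right high-level picture—encode Lasserre moments combinatorially and bound the treewidth of the resulting gadgets—but the backward implication $G \equiv_{\mathcal{L}_t} H \Rightarrow G \simeq_t^{\textup{L}} H$ has a genuine gap. A Farkas-type separating functional certifies \emph{infeasibility}; it does not produce a feasible point. Even if all $\hom(F,\cdot)$ agree for $F \in \mathcal{L}_t$, you must still \emph{construct} a positive semidefinite moment matrix over $(V(G)\times V(H))^t$, and nothing in your duality sketch does so. The paper's route is essentially constructive and quite different: it identifies the span of homomorphism tensors $\{\boldsymbol{F}_G : \boldsymbol{F} \in \mathcal{L}_t\}$ with a self-adjoint unital matrix algebra $\widehat{\mathcal{A}}^t_G$ (the partially coherent algebra), shows that homomorphism indistinguishability over $\mathcal{L}_t$ yields an algebra isomorphism $\widehat{\mathcal{A}}^t_G \to \widehat{\mathcal{A}}^t_H$ respecting matrix product, Schur product with atomic tensors, and the $\mathfrak{S}_{2t}$-action on indices, and then invokes a representation-theoretic lemma to realise this isomorphism as conjugation by a unitary. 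Composing with the orthogonal projection onto $\widehat{\mathcal{A}}^t_G$ gives a completely positive map whose Choi matrix is the Lasserre witness. The $\mathfrak{S}_{2t}$-equivariance, absent from your proposal, is not cosmetic: Lasserre variables are indexed by \emph{sets}, so any tuple-indexed reformulation must carry this symmetry, and closure under label permutation is one of the three generating operations of $\mathcal{L}_t$.

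Your treewidth sketch is close in spirit to the paper's argument, but the $3t$-bag does not come from an ``overlap'' between two degree-$t$ monomials; it comes from \emph{series composition} of two $(t,t)$-bilabelled graphs, where the new bag holds the $t$ in-labels of the first factor, the $t$ out-labels of the second, and the $t$ identified middle vertices—three disjoint $t$-blocks, not two overlapping ones. Defining $\mathcal{L}_t$ as the minor closure of a fixed gadget family, as you propose, would not obviously coincide with the class generated inductively by series composition, Schur product with atomic graphs, and label permutation, and it is precisely this inductive description that makes the algebra correspondence and the minor-closedness proof (handled in the paper via a separate theory of bilabelled minors) go through.
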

 
    The bound on the treewidth of graphs in $\mathcal{L}_t$ in \cref{thm:main2} yields the upper bound in \cref{thm:main} given the result of \cite{atserias_sheraliadams_2013,grohe_pebble_2015,atserias_definable_2023,dvorak_recognizing_2010} that two graphs $G$ and $H$ satisfy $G \simeq_t^{\text{SA}} H$ if and only if they are homomorphism indistinguishable over the class $\mathcal{TW}_{t-1}$ of graphs of treewidth at most $t-1$.
    To our knowledge, \cref{thm:main} is the first result which tightly relates equivalence relations on graphs by comparing the graph classes which characterise them in terms of homomorphism indistinguishability.

    Our techniques extend to a stronger version of the Lasserre hierarchy which imposes non-negativity constraints on all variables. 
    Denoting feasibility of the level-$t$ Lasserre relaxation of $\ISO(G, H)$ with non-negativity constraints by $G \simeq_t^{\textup{L}^+} H$, 
    we characterise $\simeq_t^{\textup{L}^+}$ in terms of homomorphism indistinguishability over the graph class $\mathcal{L}_t^+$, defined in \cref{def:l-lplus} as a super class of $\mathcal{L}_t$.
    This is in line with previous work in \cite{dell_lovasz_2018,grohe_homomorphism_2022}, where the feasibility of the level-$t$ Sherali--Adams relaxation of $\ISO(G, H)$ without non-negativity constraints was characterised as homomorphism indistinguishable over the class $\mathcal{PW}_{t-1}$ of graphs of pathwidth at most $t-1$. 
    
	\begin{theorem} \label{thm:main3}
		For every integer $t \geq 1$, there is a minor-closed graph class $\mathcal{L}_t^+$ of graphs of treewidth at most $3t-1$ such that for all graphs $G$ and $H$ it holds that $G \simeq_t^{\textup{L}^+} H $ if and only if $G \equiv_{\mathcal{L}_t^+} H$.
	\end{theorem}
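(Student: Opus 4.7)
The plan is to prove Theorem~\ref{thm:main3} by adapting the strategy used for Theorem~\ref{thm:main2}, modifying the associated graph class so as to reflect the additional non-negativity constraints. The class $\mathcal{L}_t^+$, defined in \cref{def:l-lplus}, should be obtained from $\mathcal{L}_t$ by admitting a broader family of gluings that encode the bilinear forms $\langle u_\alpha, u_\beta \rangle$ being non-negative, rather than only the diagonal forms.

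First, I would recast feasibility of the level-$t$ Lasserre relaxation of $\ISO(G, H)$ with non-negativity constraints as a Gram condition: there exist vectors $u_\alpha$ indexed by multisets $\alpha$ of pairs from $V(G) \times V(H)$ of size at most $t$, satisfying the linear relations induced by the $\ISO$ constraints and the inequalities $\langle u_\alpha, u_\beta \rangle \geq 0$ for every pair $\alpha, \beta$ (rather than only the diagonal entries). Using the graph encoding from the proof of Theorem~\ref{thm:main2}, each such inner product is a linear combination of $\hom(F, G) - \hom(F, H)$ over certain ``glued'' graphs $F$, and collecting all of these $F$ produces $\mathcal{L}_t^+ \supseteq \mathcal{L}_t$.

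Second, I would establish the equivalence $G \simeq_t^{\textup{L}^+} H$ iff $G \equiv_{\mathcal{L}_t^+} H$ by the standard two-way argument. The forward direction applies the Gram construction to vectors whose coordinates are homomorphism counts $\hom(F_\alpha, G) = \hom(F_\alpha, H)$, yielding a feasible moment matrix with non-negative entries. The reverse direction reads off equality $\hom(F, G) = \hom(F, H)$ for each generator $F \in \mathcal{L}_t^+$ from the inner-product relations of a feasible Gram family. Minor-closure is then verified on the generators, showing that edge deletion and contraction commute with the closure operations.

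The main obstacle is establishing the treewidth bound $\tw(F) \leq 3t - 1$ for every $F \in \mathcal{L}_t^+$. The analogous bound for $\mathcal{L}_t$ rests on decomposing each level-$t$ gluing into bags containing the $2t$ ``endpoint'' vertices coming from the pair indices together with at most $t$ ``linking'' vertices, yielding bags of size at most $3t$. The difficulty is to check that the enlarged family of mixed gluings admitted by $\mathcal{L}_t^+$ preserves this decomposition. I expect this to go through because each mixed gluing still identifies two labelled interfaces of size at most $t$ along a common multiset, so the resulting tree decomposition inherits bags of size at most $3t$ from the two constituent decompositions. Carefully executing the induction on the construction tree, and in particular ensuring that no step of the non-negativity closure introduces interfaces exceeding $t$ labels, is the most delicate point of the argument.
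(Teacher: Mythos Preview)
Your outline misses the central mechanism that connects the non-negativity constraints to the enlarged graph class. The extra constraint $y_I \geq 0$ for all $I$ does indeed say that all Gram inner products $\langle u_\alpha, u_\beta\rangle$ are non-negative, but your claim that ``each such inner product is a linear combination of $\hom(F,G)-\hom(F,H)$'' is not well-defined: the entries $y_I$ of a feasible solution are values of a specific solution, not differences of homomorphism counts, and there is no natural graph $F_\alpha$ attached to a multiset $\alpha$ of vertex pairs. Likewise, the proposed forward direction---building a feasible moment matrix from ``vectors whose coordinates are homomorphism counts''---does not type-check, since the Lasserre variables are indexed by subsets of $V(G)\times V(H)$, not by graphs.

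The paper's route is substantially different. It first reformulates feasibility of level-$t$ Lasserre with non-negativity as the existence of a doubly non-negative matrix satisfying \cref{m1,m2,m3,m4} (\cref{thm:lasserre-dnn-iso}), then passes via the Choi correspondence to a completely $\mathcal{DNN}$-preserving linear map $\Phi$ (\cref{thm4.7}), and shows that such a map restricts to an isomorphism of the \emph{coherent} $t$-adjacency algebras of $G$ and $H$ (\cref{thm:equivalent}). The key algebraic point you are missing is that entrywise non-negativity of the Choi matrix is exactly what forces $\Phi$ to preserve \emph{arbitrary} Schur products (not just Schur products with atomic tensors), via a separate argument \cite[Lemma~6.2]{mancinska_graph_2020}. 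On the combinatorial side, Schur product corresponds to parallel composition, so this is precisely why $\mathcal{L}_t^+$ is closed under arbitrary parallel composition while $\mathcal{L}_t$ is not. Without this Schur-product-preservation step, neither direction of the equivalence goes through: you cannot ``read off'' homomorphism equalities for graphs built with arbitrary gluings, nor can you manufacture entrywise non-negativity from homomorphism indistinguishability alone. Your treewidth and minor-closure sketches are essentially on the right track (cf.\ \cref{lem:tw3t,lem:minor-closed}), but the core equivalence requires the coherent-algebra machinery rather than a direct Gram argument.
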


    Given the aforementioned correspondence between the Sherali--Adams relaxation with and without non-negativity constraints and homomorphism indistinguishability over graphs of bounded treewidth and pathwidth, we conduct a detailed study of the relationship between the class of graphs of bounded treewidth, pathwidth, and the classes $\mathcal{L}_t$ and $\mathcal{L}_t^+$. Their results, depicted in \cref{fig:relationship}, yield independent proofs of the known relations between feasibility of the Lasserre relaxation with and without non-negativity constraints and the Sherali--Adams relaxation with and without non-negativity constraints \cite{berkholz_limitations_2015,atserias_definable_2023,grohe_homomorphism_2022} using the framework of homomorphism indistinguishability.

    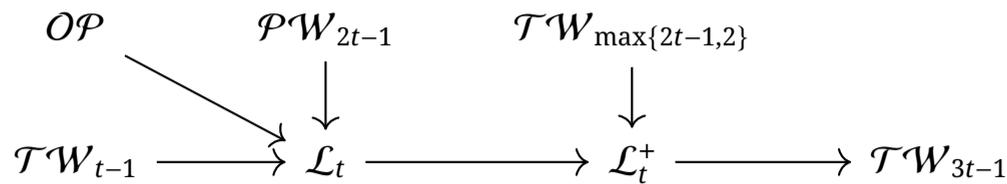
\begin{figure}
        \centering
		\begin{tikzcd}
			\mathcal{OP} \arrow[dr] & \mathcal{PW}_{2t-1} \arrow[d]                        & \mathcal{TW}_{\max\{2t-1, 2\}} \arrow[d] &                     \\
			\mathcal{TW}_{t-1} \arrow[r] & \mathcal{L}_t \arrow[r] & \mathcal{L}_t^+ \arrow[r]     & \mathcal{TW}_{3t-1}
		\end{tikzcd}
		\caption{Relationship between $\mathcal{L}_t$, $\mathcal{L}_t^+$, the classes of graphs of bounded treewidth, bounded pathwidth, and the class of outerplanar graphs.	
			An arrow $\mathcal{A} \rightarrow \mathcal{B}$ indicates that $\mathcal{A} \subseteq \mathcal{B}$ and thus that $G \equiv_{\mathcal{B}} H$ implies $G \equiv_{\mathcal{A}} H$ for all graphs $G$ and $H$. For formal statements, see \cref{sec:lt-ltplus,sec:t-equals-one}.}
		\label{fig:relationship}
	\end{figure}

    In the course of proving \cref{thm:main2,thm:main3}, we derive further equivalent characterisations of $\simeq_t^{\textup{L}}$ and $\simeq_t^{\textup{L}^+}$. These characterisations, which are mostly of a linear algebraic nature, ultimately yield a characterisation of $\simeq_t^{\textup{L}^+}$ in terms of a fragment of first-order logic with counting quantifiers and indistinguishability under a polynomial time algorithm akin to the Weisfeiler--Leman algorithm.
    In this way, we obtain the following algorithmic result.
    It implies that \emph{exact} feasibility of the Lasserre semidefinite program with non-negativity constraints can be tested in polynomial time.
    In general, only the \emph{approximate} feasibility of semidefinite programs can be decided efficiently, e.g.\  using the ellipsoid method \cite{grotschel_geometric_2012,atserias_definable_2023}.
    Our reformulations of $\simeq_t^{\textup{L}}$ fall short of yielding a polynomial-time algorithm for exact feasibility of the Lasserre semidefinite program without non-negativity constraints.
    
    \begin{theorem} \label{thm:main5}
		Let $t \geq 1$.
		Given graphs $G$ and $H$, it can be decided in polynomial time whether $G \simeq_t^{\textup{L}^+} H $.
	\end{theorem}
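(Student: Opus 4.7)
The plan is to exploit the alternative characterisation of $\simeq_t^{\textup{L}^+}$ via a Weisfeiler--Leman-style colour refinement algorithm announced in the introduction and established in the body of the paper. Exact feasibility of a semidefinite program is not generally known to be computable in polynomial time---only approximate feasibility can be tested via the ellipsoid method---so the content of the theorem is precisely that, for $\simeq_t^{\textup{L}^+}$, the SDP can be bypassed entirely in favour of a combinatorial procedure.

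Concretely, I would run the colour refinement algorithm on the disjoint union of $G$ and $H$. The algorithm colours tuples of vertices of arity bounded by a function of $t$ alone, and each refinement step aggregates colour information from boundedly many related tuples; thus a single round is implementable in $\mathrm{poly}(n)$ time. Since refinement only increases the number of colour classes and there are at most $n^{O(t)}$ such tuples in total, the process stabilises within $n^{O(t)}$ rounds, yielding an overall polynomial running time.

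To decide $\simeq_t^{\textup{L}^+}$, I would then compare the stable colour partitions induced on $V(G)^k$ and $V(H)^k$, where $k$ is the relevant arity: by the characterisation theorem, $G \simeq_t^{\textup{L}^+} H$ holds if and only if the two multisets of colour classes coincide, and this equality is checkable in polynomial time by sorting.

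The main obstacle lies not in the complexity analysis, which is routine, but in establishing the underlying characterisation: one must verify that a purely combinatorial refinement rule captures exactly the positive semidefiniteness and non-negativity constraints of the level-$t$ Lasserre relaxation, so that no semidefinite solver is ever invoked. Once this characterisation is in place, the above argument converts it directly into a polynomial-time decision procedure; the characterisation via a fragment of counting logic mentioned in the introduction should simultaneously fall out of the same refinement-based analysis.
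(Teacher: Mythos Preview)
Your proposal is correct and follows essentially the same approach as the paper: the paper defines a Weisfeiler--Leman-style refinement algorithm $\mathsf{mwl}$ on $2t$-tuples (\cref{def:alg-mwl}), observes that it stabilises in at most $n^{2t}-1$ rounds and hence runs in polynomial time, and then invokes \cref{thm:l+logic-global} together with \cref{thm:summary-pos} to conclude that $\mathsf{mwl}$-indistinguishability coincides with $\simeq_t^{\textup{L}^+}$. Your identification of the characterisation theorem as carrying all the real weight, with the complexity analysis being routine, matches the paper exactly; the only cosmetic difference is that the paper compares the multisets of stable colours on $G$ and $H$ separately rather than running the algorithm on their disjoint union, but these are equivalent.
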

	
	Finally, for $t = 1$, we show that $\mathcal{L}_1$ and $\mathcal{L}_1^+$ are respectively equal to the class $\mathcal{OP}$ of outerplanar graphs and to the class of graphs of treewidth at most $2$. The following \cref{thm:main4} parallels a result of \cite{mancinska_quantum_2019} asserting that two graphs $G$ and $H$ are indistinguishable under the $2$-WL algorithm iff $G \simeq_1^{\textup{L}^+} H$.
	
	\begin{theorem} \label{thm:main4}
		Two graphs $G$ and $H$ satisfy $G \simeq_1^{\textup{L}} H$ if and only if $G \equiv_{\mathcal{OP}} H$.
	\end{theorem}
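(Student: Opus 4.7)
The plan is to derive Theorem~\ref{thm:main4} from Theorem~\ref{thm:main2} specialised to $t=1$, which already gives $G \simeq_1^{\textup{L}} H \iff G \equiv_{\mathcal{L}_1} H$. It then suffices to prove that $\mathcal{L}_1 = \mathcal{OP}$ as graph classes, so that $\equiv_{\mathcal{L}_1}$ and $\equiv_{\mathcal{OP}}$ coincide. This is a structural claim about the class $\mathcal{L}_1$ introduced in Definition~\ref{def:l-lplus}.

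For the inclusion $\mathcal{OP} \subseteq \mathcal{L}_1$, I would invoke the containment indicated in Figure~\ref{fig:relationship}, which ultimately comes from unpacking the definition of $\mathcal{L}_t$ and exhibiting an explicit construction of every outerplanar graph as a member of $\mathcal{L}_1$ (intuitively, the tree of triangles/cliques witnessing an outerplanar decomposition should directly yield the structure required by $\mathcal{L}_1$).

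For the reverse inclusion $\mathcal{L}_1 \subseteq \mathcal{OP}$, I would use Wagner's characterisation: a graph is outerplanar iff it has no $K_4$ and no $K_{2,3}$ minor. Since $\mathcal{L}_1$ is minor-closed by Theorem~\ref{thm:main2}, it is enough to verify that neither $K_4$ nor $K_{2,3}$ lies in $\mathcal{L}_1$. The graph $K_4$ is excluded automatically since $\tw(K_4) = 3 > 2 = 3t-1$ for $t=1$, exceeding the treewidth bound from Theorem~\ref{thm:main2}. The real content is to show $K_{2,3} \notin \mathcal{L}_1$, which amounts to constructing graphs $G$ and $H$ with $G \simeq_1^{\textup{L}} H$ but $\hom(K_{2,3}, G) \ne \hom(K_{2,3}, H)$, or equivalently to observing (using the linear-algebraic/SDP characterisations developed on the way to Theorems~\ref{thm:main2} and~\ref{thm:main3}) that no level-$1$ Lasserre certificate forces equality of $K_{2,3}$-homomorphism counts.

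The main obstacle I anticipate is precisely this separation: while $K_4$ is ruled out for the soft reason of treewidth, the $K_{2,3}$ obstruction is what distinguishes $\mathcal{L}_1$ from the full class $\mathcal{TW}_2$ (which by Theorem~\ref{thm:main3} characterises the stronger relation $\simeq_1^{\textup{L}^+}$). I would attempt this either by a direct witness pair $(G,H)$ known to be distinguished by $2$-WL but not by level-$1$ Lasserre without non-negativity constraints, or by an inductive combinatorial argument that any graph admissible by the recursive definition of $\mathcal{L}_t$ at $t=1$ must be built from a nested clique-sum structure that precludes a $K_{2,3}$ minor. The former is more direct once a suitable construction (for instance, appropriately chosen CFI-type graphs or strongly regular pairs distinguished by $K_{2,3}$-homomorphism counts) is available; the latter is more self-contained but requires careful induction over the syntactic definition of $\mathcal{L}_1$.
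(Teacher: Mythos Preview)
Your high-level reduction—apply Theorem~\ref{thm:main2} at $t=1$ and then prove that $\mathcal{L}_1$ coincides with the outerplanar graphs—is exactly the paper's route. The execution, however, has a real gap.

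For $\mathcal{OP} \subseteq \mathcal{L}_1$, invoking Figure~\ref{fig:relationship} is circular: that arrow \emph{is} Proposition~\ref{lem:op}, which is what you are proving. Your tree-of-triangles intuition points in the right direction; the paper makes it precise as a decomposition lemma (Lemma~\ref{lem:generation}) that, given a $(1,1)$-bilabelled outerplanar graph with distinct non-adjacent labels, finds a cut vertex separating them and hence a series factorisation inside $\mathcal{L}_1$.

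For $\mathcal{L}_1 \subseteq \mathcal{OP}$, reducing via minor-closure to the exclusion of $K_4$ and $K_{2,3}$ is a genuinely different tack from the paper's, and the $K_4$ case is indeed free from the treewidth bound. But for $K_{2,3}$ you give no completed argument: the witness-pair route would be a legitimate alternative proof, yet you produce no pair, and the paper does not go this way either. The paper instead runs the direct structural induction you list as your second option, but with a strengthening of the inductive invariant that you are missing. It works with a bilabelled class (Definition~\ref{def:op}) of $(1,1)$-bilabelled graphs whose \emph{expansion}—the graph obtained by attaching a fresh length-two path between the two labelled vertices—is outerplanar, and proves this class closed under series composition and under Schur product with atomic graphs (Lemma~\ref{lem:closure}). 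The naive invariant ``underlying graph is outerplanar'' does not survive Schur product with $\boldsymbol{A}^{12}$: take $K_{2,3}$ minus one edge, place the labels on its endpoints, and adding the edge back recovers $K_{2,3}$. The expansion condition is precisely what forbids such label placements, and identifying it is the missing idea in your inductive sketch.
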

	
    \subsection{Techniques}

    In the first part of the paper (\cref{sec:lasserre-to-tensor}), linear algebraic tools developed in \cite{mancinska_relaxation_2017,mancinska_quantum_2019} are generalised to yield reformulations of the entire Lasserre hierarchy with and without non-negativity constraints. 
    \Cref{sec:hi} is concerned with the graph theoretic properties of the graph classes $\mathcal{L}_t$ and $\mathcal{L}_t^+$.
    For understanding the homomorphism indistinguishability relations over these graph classes, 
    the framework of bilabelled graphs and their homomorphism tensors developed in \cite{mancinska_graph_2020,grohe_homomorphism_2022} is used.
    Despite this, our approach is different from \cite{grohe_homomorphism_2022,rattan_weisfeiler_2023} in the sense that here the graph classes $\mathcal{L}_t$ and $\mathcal{L}_t^+$ are inferred from given systems of equations, namely the Lasserre relaxation, rather than that a system of equations is built for a given graph class.

	\section{Preliminaries}
	
	\subsection{Linear Algebra}
	
	Let $\mathcal{PSD}$ denote the family of real \emph{positive semidefinite matrices}, i.e.\  of matrices $M$ of the form $M_{ij} = v_i^Tv_j$ for vectors $v_1, \dots, v_n$, the \emph{Gram vectors} of $M$. Write $M \succeq 0$ iff $M \in \mathcal{PSD}$.
	Let $\mathcal{DNN}$ denote the family of \emph{doubly non-negative matrices}, i.e.\  of entry-wise non-negative positive semidefinite matrices.

    Let $n,m \geq 1$. Write $\id_n \in \mathbb{C}^{n \times n}$ for the identity matrix.
    The \emph{tensor product} of two matrices $X = (x_{ij}) \in \mathbb{C}^{n\times n}$ and $Y \in \mathbb{C}^{m \times m}$ is the block matrix \[ X \otimes Y = \left( \begin{matrix} x_{11}Y & \dots & x_{1n} Y \\ \vdots & \ddots & \vdots \\ x_{n1}Y & \dots & x_{nn} Y \end{matrix} \right) \in \mathbb{C}^{nm \times nm}. \]

    A \emph{tensor} is an element $A \in \mathbb{C}^{n^t \times n^t}$ for some $n, t \in \mathbb{N}$.
    For a tensor $A \in \mathbb{C}^{n^t \times n^t}$,
    write $\soe(A) \coloneqq \sum_{i,j =1}^{n^t} A_{ij}$ for its \emph{sum-of-entries}.
    The symmetric group $\mathfrak{S}_{2t}$ acts on $\mathbb{C}^{n^t \times n^t}$ by permuting the coordinates, i.e.\ 
    for all $\boldsymbol{u},\boldsymbol{v} \in [n]^t$ and $\sigma \in \mathfrak{S}_{2t}$,
    $A^\sigma(\boldsymbol{u},\boldsymbol{v}) \coloneqq A(\boldsymbol{x}, \boldsymbol{y})$ where $\boldsymbol{x}_i \coloneqq (\boldsymbol{uv})_{\sigma^{-1}(i)}$ and  $\boldsymbol{y}_{j-t} \coloneqq (\boldsymbol{uv})_{\sigma^{-1}(j)}$ for all $1 \leq i \leq t < j \leq 2t$.

    We recall the following lemmas from \cite{mancinska_graph_2020}.
    A linear map $\Phi \colon \mathbb{C}^{m\times m} \to \mathbb{C}^{n \times n}$ is
		 \emph{trace-preserving} if $\tr(\Phi(X)) =\tr(X)$ for all $X \in \mathbb{C}^{m\times m}$,
		 \emph{unital} if $\Phi(\id_m) = \id_n$,
		 \emph{$\mathcal{K}$-preserving} for a family of matrices $\mathcal{K}$ if $\Phi(K) \in \mathcal{K}$ for all $K \in \mathcal{K}$,
		 \emph{positive} if it is $\mathcal{PSD}$-preserving, i.e.\  if $\Phi(X)$ is positive semidefinite for all positive semidefinite $X$,
		 \emph{completely positive} if $\id_r \otimes \Phi$ is positive for all $r \in \mathbb{N}$.
	The \emph{Choi matrix} of $\Phi$ is $C_\Phi = \sum_{i,j=1}^m E_{ij} \otimes \Phi(E_{ij}) \in \mathbb{C}^{mn \times mn}$. 
 Here, $E_{ij} \in \mathbb{C}^{m \times m}$ denotes the matrix whose $(i,j)$-th entry is $1$ and all whose other entries are zero.
    The statement of \cref{lem:mg-4.4} for $\mathcal{PSD}$ is well-known, cf.\ e.g.\ \cite{choi_linear_1975}.
    
    \begin{lemma}[{\cite[Lemma~4.4]{mancinska_graph_2020}}] \label{lem:mg-4.4}
		Consider a family of matrices $\mathcal{K} \in \{\mathcal{DNN}, \mathcal{PSD}\}$ and a linear map $\Phi \colon \mathbb{C}^{m\times m} \to \mathbb{C}^{n\times n}$.
		The following are equivalent:
		\begin{enumerate}
			\item the map $\id_m \otimes \Phi$ is $\mathcal{K}$-preserving,
			\item the Choi matrix $C_\Phi$ lies in $\mathcal{K}$,
			\item $\Phi$ is \emph{completely $\mathcal{K}$-preserving}, i.e.\  $\id_r \otimes \Phi$ is $\mathcal{K}$-preserving for all $r \in \mathbb{N}.$
		\end{enumerate}
	\end{lemma}

    For $\Phi \colon \mathbb{C}^{m\times m} \to \mathbb{C}^{n\times n}$, write $\Phi^* \colon \mathbb{C}^{n\times n} \to \mathbb{C}^{m \times m}$ for the \emph{adjoint} of $\Phi$.
    As a matrix, $\Phi^*$ is the conjugate transpose of $\Phi$.
	
	\begin{lemma}[{\cite[Lemma~4.10]{mancinska_graph_2020}}] \label{lem:mg-4.9}
		Let $\Phi \colon \mathbb{C}^{n\times n} \to \mathbb{C}^{n\times n}$ be a linear map which is completely positive, trace-preserving, and unital.
		Then for any matrix $X$ such that $\Phi^*(\Phi(X)) = X$ it holds that
		$\Phi(XW) = \Phi(X)\Phi(W)$ and $\Phi(WX) = \Phi(W)\Phi(X)$ for all $W \in \mathbb{C}^{n\times n}$.
	\end{lemma}

    A vector space $\mathcal{A} \subseteq \mathbb{C}^{n\times n}$ is an \emph{algebra} if it is closed under matrix multiplication.
    It is \emph{unital} if it contains the identity matrix $\id_n$.
    It is \emph{self-adjoint} if it is closed under taking conjugate transposes.
 
	\begin{lemma}[{\cite[Lemma~5.1]{mancinska_graph_2020}}] \label{lem:mg-A.1}
		Let $\mathcal{A}$ and $\mathcal{B}$ be self-adjoint unital subalgebras of $\mathbb{C}^{n\times n}$
		and $\phi \colon \mathcal{A} \to \mathcal{B}$ be a trace-preserving isomorphism such that $\phi(X^*) = \phi(X)^*$ for all $X \in \mathcal{A}$. Then there exists a unitary $U \in \mathbb{C}^{n\times n}$ such that $\phi(X) = UXU^*$ for all $X \in \mathcal{A}$.
	\end{lemma}

    For two vectors $v, w \in \mathbb{C}^n$, write $v \odot w$ for their \emph{Schur product}, i.e.\ $(v \odot w)(i) \coloneqq v(i)w(i)$ for all $i\in [n]$.
    
	\begin{lemma}[{\cite[Lemma~4.5]{mancinska_graph_2020}}] \label{lem:mg-4.5}
		Let $D \in \mathbb{C}^{m \times n}$ be a matrix and let $u \in \mathbb{C}^n$ and $v \in \mathbb{C}^m$. Then the following are equivalent:
		\begin{enumerate}
			\item $D(u \odot w) = v \odot (Dw)$ for all $w \in \mathbb{C}^n$,
			\item $D_{ij} = 0$  for all $i \in [m]$ and $j \in [n]$ such that $v_i \not= u_j$,
			\item $D^*(v \odot z) = u \odot (D^*z)$ for all $z \in \mathbb{C}^m$.
		\end{enumerate}
	\end{lemma}

    \subsection{Bilabelled Graphs and Homomorphism Tensors}
    \label{sec:bilabelled}
    All graphs in this article are undirected, finite, and without multiple edges. A graph is \emph{simple} if it does not contain any loops. 
    A \emph{homomorphism} $h \colon F \to G$ from a graph $F$ to a graph $G$ is a map $V(F) \to V(G)$ such that for all $uv \in E(F)$ it holds that $h(u)h(v) \in E(G)$. Note that this implies that any vertex in $F$ carrying a loop must be mapped to a vertex carrying a loop in $G$. 
    Write $\hom(F, G)$ for the number of homomorphisms from $F$ to $G$. For a family of graphs $\mathcal{F}$ and graphs $G$ and $H$ write $G \equiv_{\mathcal{F}} H$ if $G$ and $H$ are \emph{homomorphism indistinguishable over $\mathcal{F}$}, i.e.\  $\hom(F, G) = \hom(F, H)$ for all $F \in \mathcal{F}$.
    Since the graphs $G$ and $H$ into which homomorphisms are counted are throughout assumed to be simple, looped graphs in $\mathcal{F}$ can generally be disregarded as they do not admit any homomorphisms into simple graphs.

    We recall the following definitions from \cite{mancinska_quantum_2019,grohe_homomorphism_2022}.
    Let $k, \ell \geq 1$. A \emph{$(k, \ell)$-bilabelled graph} is a tuple $\boldsymbol{F} = (F, \boldsymbol{u}, \boldsymbol{v})$ where $F$ is a graph and $\boldsymbol{u} \in V(F)^k$, $\boldsymbol{v} \in V(F)^\ell$.
    The $\boldsymbol{u}$ are the \emph{in-labelled vertices} of $\boldsymbol{F}$ while the $\boldsymbol{v}$ are the \emph{out-labelled vertices} of $\boldsymbol{F}$. Given a graph $G$, the \emph{homomorphism tensor} of $\boldsymbol{F}$ for $G$ is $\boldsymbol{F}_G \in \mathbb{C}^{V(G)^k \times V(G)^\ell}$ whose $(\boldsymbol{x}, \boldsymbol{y})$-th entry is the number of homomorphisms $h \colon F \to G$ such that $h(\boldsymbol{u}_i) = \boldsymbol{x}_i$ and $h(\boldsymbol{v}_j) = \boldsymbol{y}_j$ for all $i \in [k]$ and $j \in [\ell]$.

    For a $(k, \ell)$-bilabelled graph $\boldsymbol{F} = (F, \boldsymbol{u}, \boldsymbol{v})$, write $\soe(\boldsymbol{F}) \coloneqq F$ for the underlying unlabelled graph of $\boldsymbol{F}$. 
    Here, $\soe$ stands for ``sum-of-entries''.
    If $k = \ell$, write $\tr(\boldsymbol{F})$ for the unlabelled graph underlying the graph obtained from $\boldsymbol{F}$ by identifying $\boldsymbol{u}_i$ with $\boldsymbol{v}_i$ for all $i\in [\ell]$.
    For $\sigma \in \mathfrak{S}_{k+\ell}$, write $\boldsymbol{F}^\sigma \coloneqq (F, \boldsymbol{x}, \boldsymbol{y})$ where $\boldsymbol{x}_i \coloneqq (\boldsymbol{uv})_{\sigma(i)}$ and  $\boldsymbol{y}_{j-k} \coloneqq (\boldsymbol{uv})_{\sigma(j)}$ for all $1 \leq i \leq k < j \leq k+\ell$, i.e.\  $\boldsymbol{F}^\sigma$ is obtained from $\boldsymbol{F}$ by permuting the labels according to $\sigma$.
    As a special case, define $\boldsymbol{F}^* \coloneqq (F, \boldsymbol{v}, \boldsymbol{u})$ the graph obtained by swapping in- and out-labels.
    
    Let $\boldsymbol{F} = (F, \boldsymbol{u}, \boldsymbol{v})$ and $\boldsymbol{F}' = (F', \boldsymbol{u}', \boldsymbol{v}')$ be $(k,\ell)$-bilabelled and $(m,n)$-bilabelled, respectively.
    If $\ell = m$, write $\boldsymbol{F} \cdot \boldsymbol{F}'$ for the $(k, n)$-bilabelled graph obtained from them by \emph{series composition}. 
    That is, the underlying unlabelled graph of $\boldsymbol{F} \cdot \boldsymbol{F}'$ is the graph obtained from the disjoint union of $F$ and $F'$ by identifying $\boldsymbol{v}_i$ and $\boldsymbol{u}'_i$ for all $i \in [\ell]$. 
    Multiple edges arising in this process are removed. 
    Loops are retained.
    The in-labels of $\boldsymbol{F} \cdot \boldsymbol{F}'$ lie on $\boldsymbol{u}$, the out-labels on $\boldsymbol{v}'$.
    Moreover, if $k = m$ and $\ell = n$, write $\boldsymbol{F} \odot \boldsymbol{F}'$ for the \emph{parallel composition} of $\boldsymbol{F}$ and $\boldsymbol{F}'$. 
    That is, the underlying unlabelled graph of the $(k, \ell)$-bilabelled graph $\boldsymbol{F} \odot \boldsymbol{F}'$ is the graph obtained from the disjoint union of $F$ and $F'$ by identifying $\boldsymbol{u}_i$ with $\boldsymbol{u}'_i$ and $\boldsymbol{v}_j$ with $\boldsymbol{v}'_j$  for all $i \in [k]$ and $j \in [\ell]$. 
    Again, multiple edges are dropped and loops retained. 
    The in-labels of $\boldsymbol{F} \odot \boldsymbol{F}'$ lie on $\boldsymbol{u}$, the out-labels on $\boldsymbol{v}$.
    
    As observed in \cite{mancinska_quantum_2019,grohe_homomorphism_2022}, the benefit of these combinatorial operations is that they have an algebraic counterpart. Formally, for all graphs $G$ and all $(\ell, \ell)$-bilabelled graphs $\boldsymbol{F}, \boldsymbol{F}'$, it holds that
    \begin{enumerate}
        \item $\soe(\boldsymbol{F}_G) = \hom(\soe(\boldsymbol{F}), G) $,
        \item $\tr(\boldsymbol{F}_G) = \hom(\tr(\boldsymbol{F}), G)$, 
        \item $(\boldsymbol{F}_G)^\sigma = (\boldsymbol{F}^\sigma)_G$, 
        \item $\boldsymbol{F}_G \cdot \boldsymbol{F}'_G = (\boldsymbol{F} \cdot \boldsymbol{F}')_G $, and
        \item $\boldsymbol{F}_G \odot \boldsymbol{F}'_G = (\boldsymbol{F} \odot \boldsymbol{F}')_G$.
    \end{enumerate}

    Slightly abusing notation, we say that two graphs $G$ and $H$ are homomorphism indistinguishable over a family of bilabelled graphs $\mathcal{S}$, in symbols $G \equiv_{\mathcal{S}} H$ if $G$ and $H$ are homomorphism indistinguishable over the family $\{\soe(\boldsymbol{S}) \mid \boldsymbol{S} \in \mathcal{S}\}$ of the underlying unlabelled graphs of the $\boldsymbol{S} \in \mathcal{S}$.

    \subsection{Pathwidth and Treewidth}
	
	\begin{definition} \label{def:definition}
		For graphs $F$ and $T$, a \emph{$T$-decomposition of $F$} is a map $\beta \colon V(T) \to 2^{V(F)}$ such that
		\begin{enumerate}
			\item $\bigcup_{t \in V(T)} \beta(t) = V(F)$,
			\item for every $e \in E(F)$, there is $t \in V(T)$ such that $e \subseteq \beta(t)$,
			\item for every $v \in V(F)$, the set of $t \in V(T)$ such that $v \in \beta(t)$ induces a connected subgraph of~$T$.
		\end{enumerate}
		The \emph{width} of a $T$-decomposition $\beta$ is $\max_{t\in V(T)} \abs{\beta(t)} - 1$.
	\end{definition}
    A $T$-decomposition for a tree $T$ is called a \emph{tree decomposition}.
    A $P$-decomposition for a path $P$ is called a \emph{path decomposition}.
	The \emph{treewidth} $\tw F$ of a graph $F$ is the minimal width of a tree decomposition. 
    Similarly, the \emph{pathwidth} $\pw F$  is the minimal width of a path decomposition. 
    For every $t \geq 0$, write $\mathcal{TW}_t$ and $\mathcal{PW}_t$ for the classes of all graphs of treewidth and respectively pathwidth at most $t$.
    The following slight generalisation of \cite[Lemma~8]{bodlaender_partial_1998} is used repeatedly in \cref{sec:lt-ltplus}.

    \begin{lemma}[{\cite[Lemma~8]{bodlaender_partial_1998}}] \label{lem:bodlaender8}  \label{lem:bodlaender8pw}
        Let $G$ be a graph and $k \geq 0$ such that $\tw G \leq k$ and $|V(G)| \geq k+1$.
        Then $G$ has a tree decomposition $\beta \colon V(T) \to 2^{V(G)}$ such that
        $\lvert \beta(t) \rvert = k+1$ for all $t \in V(T)$ and
        $\lvert \beta(s) \cap \beta(t) \rvert = k$ for all $st \in E(T)$.
        Furthermore, if $\pw G \leq k$, then $T$ can be chosen to be a path.
    \end{lemma}
    \begin{proof}
        Suppose that the treewidth of $G$ is $\ell \leq k$. 
        If $|V(G)| = k+1$ then the tree decomposition over the one-vertex tree is as desired.
        Otherwise, any tree decomposition of width at most $k$ must be over a graph on at least two vertices.
        Let $\beta \colon V(T) \to 2^{V(G)}$ be any tree decomposition of width $\ell$. 
        We repeatedly apply the following steps:
        \begin{itemize}
            \item If $st \in E(T)$ is such that $\beta(s) \subseteq \beta(t)$ or $\beta(t) \subseteq \beta(s)$ then the edge $st$ in $T$ can be contracted and the set $\beta(s) \cup \beta(t)$  can be taken to be the bag at the vertex obtained by contraction.
            \item If $st \in E(T)$ and $|\beta(s)| < k+1$ and $\beta(t) \not\subseteq \beta(s)$ then $\beta(s)$ can be enlarged by a vertex $v \in \beta(s) \setminus \beta(t)$.
            \item If $st \in E(T)$ and $|\beta(s)| = |\beta(t)| = k+1$ and $|\beta(s) \cap \beta(t)| < k$ then subdivide the edge $st$ in $T$ by introducing a fresh vertex $r$.
            Choose vertices $v \in \beta(s) \setminus \beta(t)$ and $w \in \beta(t) \setminus \beta(s)$ and let $\beta(r) \coloneqq (\beta(s) \setminus \{v\}) \cup \{w\}$.
        \end{itemize}
        If none of these operations can be applied, the tree decomposition is as desired.

        The operations used  to manipulate the decomposition tree were contraction and subdivision. If the initial decomposition tree is in fact a path then the resulting tree will also be a path. This yields the last assertion.
    \end{proof}

	\subsection{Systems of Equations for Graph Isomorphism}
    \label{sec:systems}

    Let $G$ and $H$ be simple graphs with vertices $g_1, \dots, g_\ell \in V(G)$ and $h_1, \dots, h_\ell \in V(H)$ for $\ell \geq 1$.
    The \emph{atomic type} of a tuple of vertices of a graph is defined as follows:
    Let $\rel_G(g_1, \dots, g_\ell) = \rel_H(h_1,\dots, h_\ell)$ if $g_i = g_j \Leftrightarrow h_i = h_j$ and $g_ig_j \in E(G) \Leftrightarrow h_ih_j \in E(H)$ for all $i, j \in [\ell]$. 
    In this case, the set $\{g_1h_1, \dots g_\ell h_\ell\} \in \binom{V(G) \times V(H)}{\ell}$ is  called a \emph{partial isomorphism}.
    
    Two simple graphs $G$ and $H$ are isomorphic if and only if there exists a $\{0,1\}$-solution to quadratic integer program $\ISO(G, H)$ which comprises variables $X_{gh}$ for $gh \in V(G) \times V(H)$ and equations
	\begin{align}
		\sum_{h \in V(H)} X_{gh} -1 &= 0 && \label{s1} \text{for all } g \in V(G), \\
		\sum_{g \in V(G)} X_{gh} - 1 &= 0 &&\label{s2} \text{for all } h \in V(H), \\
		X_{gh} X_{g'h'} &= 0 && \text{for all $gh, g'h' \in V(G) \times V(H)$ s.t.\  $\rel_G(g,g') \neq \rel_H(h, h')$.} \label{type}
	\end{align}
 
    We define the Lasserre relaxation of $\ISO(G, H)$ following \cite{mancinska_quantum_2019}.  
    See also \cref{app:lasserre} for a comparison to the version used in \cite{atserias_definable_2023}.
	
	\begin{definition} \label{def:lasserre}
		Let $t \geq 1$. The \emph{level-$t$ Lasserre relaxation for graph isomorphism} has variables $y_I$ ranging over $\mathbb{R}$ for $I \in \binom{V(G) \times V(H)}{\leq 2t}$. The constraints are
		\begin{align}
			M_t(y) \coloneqq (y_{I \cup J})_{I, J \in \binom{V(G) \times V(H)}{\leq t}}  &\succeq 0,&&\label{lassere1} \\
			\sum_{h \in V(H)} y_{I \cup \{gh\}} &= y_{I} &&
            \parbox{9cm}{for all $I \in \binom{V(G) \times V(H)}{\leq 2t-2}$ and all $g \in V(G)$,}
            \label{lassere2} \\
			\sum_{g \in V(G)} y_{I \cup \{gh\}} &= y_{I} && \parbox{9cm}{for all $I \in \binom{V(G) \times V(H)}{\leq 2t-2}$ and all $h \in V(H)$,} \label{lassere3} \\
			y_I & = 0 && \parbox{9cm}{for all $I \in \binom{V(G) \times V(H)}{\leq 2t}$  such that $I$ is not a partial isomorphism,} \label{lassere4}\\
			y_\emptyset &= 1.&& \label{lassere5}
        \end{align}
        If the system is feasible for two graphs $G$ and $H$, write $G \simeq_t^{\textup{L}} H$.
        If the system together with the constraint $y_I \geq 0$ for all $I \in \binom{V(G)  \times V(H}{\leq 2t}$ is feasible, write $G \simeq_t^{\textup{L}^+} H$.
	\end{definition}
    In \cref{thm:lasserre-s+-iso}, we show that the $2t-2$ in \cref{lassere2,lassere3} can be replaced by $2t-1$ without loss of generality.
    That is, the system in \cref{def:lasserre} has a (non-negative)  real solution if and only if the system obtained replacing \cref{lassere2,lassere3} with \cref{lassere2a,lassere3a} has a (non-negative) real solution.
    The $2t-2$ in \cref{def:lasserre} is an artefact of its construction from $\ISO(G, H)$, cf.\ \cite[Section~10]{mancinska_graph_2020} and \cite[Equations~(2d)--(2e)]{snook_graph_2014}.
    
    The second hierarchy of integer programming relaxation considered in this article is the Sherali--Adams relaxation \cite{sherali_hierarchy_1990}.
    It has been applied both to the  integer linear program and to $\ISO(G, H)$, the integer quadratic program for graph isomorphism. 
    For the linear program, it was shown in \cite{atserias_sheraliadams_2013} that the Sherali--Adams levels are sandwiched between the levels of the Weisfeiler--Leman hierarchy. 
    Subsequently,  variants of these linear programs were proposed in \cite{grohe_pebble_2015} which correspond precisely to the levels of the latter hierarchy. 
    In this work, we focus on the Sherali--Adams relaxations of the integer quadratic program $\ISO(G, H)$.
    See \cite[Section~2.7]{grohe_homomorphism_2021_arxiv} for a definition.
    In \cite{malkin_sheraliadams_2014}, it was shown that the level-$t$ Sherali--Adams relaxation of $\ISO(G,H)$ has a non-negative rational solution if and only if $G$ and $H$ are not distinguished by the $(t-1)$-dimensional Weisfeiler--Leman algorithm. 
    The following \cref{thm:sa} summarises equivalent formulations.
    \begin{theorem}[\cite{malkin_sheraliadams_2014,dvorak_recognizing_2010,cai_optimal_1992}] \label{thm:sa}
        Let $t \geq 1$. For graphs $G$ and $H$, the following are equivalent:
        \begin{enumerate}
            \item the level-$t$ Sherali--Adams relaxation of $\ISO(G,H)$ has a non-negative rational solution, i.e.\  $G \simeq_t^{\textup{SA}} H$,
            \item $G$ and $H$ satisfy the same sentences of $t$-variable first order logic with counting quantifiers,
            \item $G$ and $H$ are homomorphism indistinguishable over the graphs of treewidth at most $t-1$,
            \item $G$ and $H$ are not distinguished by the $(t-1)$-dimensional Weisfeiler--Leman algorithm,
        \end{enumerate}
    \end{theorem}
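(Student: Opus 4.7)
The plan is to prove the theorem as a hub-and-spoke equivalence with $(t-1)$-dimensional Weisfeiler--Leman at the centre, reducing each of (1), (2), (3) to indistinguishability under that algorithm. The three pairings $(1)\Leftrightarrow(4)$, $(2)\Leftrightarrow(4)$, $(3)\Leftrightarrow(4)$ correspond respectively to the cited works of Atserias--Ochremiak, Cai--F\"urer--Immerman, and Dvo\v{r}\'ak, so the task is to assemble the three arguments into a coherent whole rather than to invent new proofs.

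I would begin with $(2)\Leftrightarrow(4)$ by a simultaneous induction on WL-refinement rounds and on the quantifier depth of $C^t$ formulas. The key observation is that a counting quantifier binding one of $t$ variables whose remaining free variables form a $(t-1)$-tuple is exactly captured by one WL-refinement step on $(t-1)$-tuples: both operations read off the multiset of colours of tuples differing by a single coordinate. The base case is the atomic type of a tuple, which matches the initial WL colouring by isomorphism type of the induced substructure.

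For $(4)\Leftrightarrow(3)$, I would run a dynamic program on a width-$(t-1)$ tree decomposition of any pattern graph $F$: the state at each bag is an assignment of its vertices to $V(G)$, and equivalence of two such assignments up to the WL colour on tuples of length at most $t$ is precisely what is needed to read $\hom(F,G)$ off the WL colouring, giving the easy direction. For the converse, I would associate to each WL-refinement step a finite family of bounded-treewidth gadget graphs whose homomorphism counts encode the refined colour; this is essentially the construction later used in the paper, where bilabelled graphs under series and parallel composition play this role for the Lasserre hierarchy.

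Finally, for $(1)\Leftrightarrow(4)$, I would show that a feasible level-$t$ Sherali--Adams solution $y$ induces a $(t-1)$-WL-stable partition of partial isomorphisms of size at most $t$ via the support and values $y_I$, using the marginalisation identities and the partial-isomorphism constraint to verify stability; conversely, a stable WL colouring yields a distribution on partial isomorphisms whose marginals satisfy the Sherali--Adams system. I expect the main obstacle to be bookkeeping: the paper uses the Atserias--Ochremiak variant of Sherali--Adams, whose level-$t$ constraints are tuned so that feasibility matches $(t-1)$-WL exactly, and the translation between set-indexed variables, tuple-indexed WL colours, and $t$-variable counting logic must be carried out with care to avoid an off-by-one shift at either endpoint.
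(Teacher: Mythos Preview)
The paper does not supply its own proof of this theorem: it is stated in the preliminaries with citations to \cite{atserias_definable_2018,dvorak_recognizing_2010,cai_optimal_1992} and used as a black box throughout. Your hub-and-spoke plan correctly identifies the three classical equivalences and attributes them to the right sources, and the sketches you give for each direction are accurate high-level summaries of the standard arguments.

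One small correction on attribution: the equivalence $(1)\Leftrightarrow(4)$ in the variant used here is originally due to Atserias--Maneva \cite{atserias_sheraliadams_2013} and Grohe--Otto \cite{grohe_pebble_2015} rather than Atserias--Ochremiak; the paper's footnote on the Sherali--Adams hierarchy makes exactly this point. Otherwise your outline is sound, and the off-by-one bookkeeping you flag is indeed the only place where the argument requires care.
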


	\section{From Lasserre to Homomorphism Tensors}
    \label{sec:lasserre-to-tensor}

    In this section, the tools are developed which will be used to translate a solution to the level-$t$ Lasserre relaxation into a statement on homomorphism indistinguishability.
	For this purpose, three equivalent characterisations of $\simeq_t^{\text{L}}$ and $\simeq_t^{\text{L}^+}$ are introduced.
	    \Cref{thm:summary,thm:summary-pos} summarise our results. The notions in items 2--4 and the graph classes $\mathcal{L}_t$ and $\mathcal{L}_t^+$ are defined in \cref{sec:K-isomorphic,sec:choi,sec:matrix-algebras,sec:hi}, respectively.
    Most of the proofs are of a linear algebraic nature. Graph theoretical repercussions are discussed in \cref{sec:hi}.

    \begin{theorem} \label{thm:summary}
		Let $t \geq 1$.
		For graphs $G$ and $H$, the following are equivalent:
		\begin{enumerate}
			\item the level-$t$ Lasserre relaxation of $\ISO(G, H)$ is feasible,
    		\item $G$ and $H$ are level-$t$ $\mathcal{PSD}$-isomorphic, cf.\ \cref{def:K-isomorphic},
    		\item there is a level-$t$ $\mathcal{PSD}$-isomorphism map from $G$ to $H$, cf.\ \cref{thm4.7},
    		\item $G$ and $H$ are partially $t$-equivalent, cf.\ \cref{def:pce},
    		\item $G$ and $H$ are homomorphism indistinguishable over $\mathcal{L}_t$, cf.\ \cref{def:l-lplus}.
		\end{enumerate}
	\end{theorem}
	
	\begin{theorem} \label{thm:summary-pos}
		Let $t \geq 1$.
		For graphs $G$ and $H$, the following are equivalent:
		\begin{enumerate}
			\item the level-$t$ Lasserre relaxation of $\ISO(G, H)$ with non-negativity constraints is feasible,
    		\item $G$ and $H$ are level-$t$ $\mathcal{DNN}$-isomorphic, cf.\ \cref{def:K-isomorphic},
    		\item there is a level-$t$ $\mathcal{DNN}$-isomorphism map from $G$ to $H$, cf.\ \cref{thm4.7},
    		\item $G$ and $H$ are $t$-equivalent, cf.\ \cref{def:ce},
    		\item $G$ and $H$ are homomorphism indistinguishable over $\mathcal{L}_t^+$, cf.\ \cref{def:l-lplus}.
		\end{enumerate}
	\end{theorem}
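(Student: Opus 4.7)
My plan is to prove the theorem by establishing the cycle $(1) \Rightarrow (3) \Rightarrow (2) \Rightarrow (4) \Rightarrow (5) \Rightarrow (1)$, following the same general architecture as \cref{thm:summary} but propagating the non-negativity condition at each step. Two features distinguish the positive case from the unconstrained one: the relevant linear maps and their Choi matrices must belong to $\mathcal{DNN}$ rather than merely $\mathcal{S}_+$, and consequently the witnessing graph class $\mathcal{L}_t^+$ must be closed under additional combinatorial operations — namely parallel composition $\odot$, whose algebraic counterpart is the Schur product of homomorphism tensors and which is the exact feature distinguishing $\mathcal{DNN}$-preservation from mere $\mathcal{S}_+$-preservation.

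For $(1) \Rightarrow (3)$, given a feasible solution $y \geq 0$ to the level-$t$ Lasserre system, I would take a Gram factorisation of the moment matrix $M_t(y)$ and rearrange the Gram vectors into a block structure indexed by $V(G)^{\leq t}$ and $V(H)^{\leq t}$ so as to obtain a linear map $\Phi$. Positivity of $M_t(y)$ yields that $\Phi$ is $\mathcal{S}_+$-preserving, while the extra assumption $y_I \geq 0$ forces the Choi matrix $C_\Phi$ to be entry-wise non-negative, so that $\Phi$ is in fact $\mathcal{DNN}$-preserving. The implications $(3) \Rightarrow (2)$ and $(2) \Rightarrow (4)$ are essentially bookkeeping: the existence of a map yields the isomorphism relation, and unpacking that relation against bilabelled homomorphism tensors up to order $t$ gives $t$-equivalence, provided that $\mathcal{DNN}$ is closed under the operations $\cdot$, partial traces, label permutations, and — crucially in the positive case — also $\odot$.

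The hardest step is $(4) \Rightarrow (5)$. The combinatorial closure operations used to generate $\mathcal{L}_t^+$ from atomic bilabelled graphs must match exactly the closure properties of $\mathcal{DNN}$ under the corresponding tensor operations, and the additional parallel-composition closure (absent from $\mathcal{L}_t$) reflects the algebraic fact $A, B \in \mathcal{DNN} \Rightarrow A \odot B \in \mathcal{DNN}$, which has no analogue for general $\mathcal{S}_+$. Using the bilabelled-graph framework of \cite{mancinska_graph_2020,grohe_homomorphism_2022} together with the identities $\hom(\soe \boldsymbol{F}, G) = \soe \boldsymbol{F}_G$ and $\hom(\tr \boldsymbol{F}, G) = \tr \boldsymbol{F}_G$, $t$-equivalence transfers to $\hom(\soe \boldsymbol{F}, G) = \hom(\soe \boldsymbol{F}, H)$ for every $\boldsymbol{F}$ obtained by iterating series composition, parallel composition, label permutations, and the $\soe/\tr$ operations on atomic generators, which is precisely the class $\mathcal{L}_t^+$ of \cref{def:l-lplus}. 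Finally, $(5) \Rightarrow (1)$ proceeds by defining, for each $I \in \binom{V(G)\times V(H)}{\leq 2t}$, a value $y_I$ as a normalised homomorphism count of a specific bilabelled graph encoding $I$; closure of $\mathcal{L}_t^+$ under parallel composition ensures $y_I \geq 0$ (each $y_I$ is expressible via a Schur-product construction), while the axioms \eqref{lassere1}--\eqref{lassere5} translate to identities of homomorphism counts over members of $\mathcal{L}_t^+$, verified by the assumption $G \equiv_{\mathcal{L}_t^+} H$.
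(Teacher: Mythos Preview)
Your overall architecture is close to the paper's, but there are two genuine gaps.

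First, the step $(5)\Rightarrow(1)$ as you sketch it does not work. The variables $y_I$ are indexed by subsets of $V(G)\times V(H)$ and must form a positive semidefinite moment matrix; they are not expressible as ``a normalised homomorphism count of a specific bilabelled graph encoding $I$'', since homomorphism counts go into $G$ or $H$ separately, not into the pair. The paper does \emph{not} close the cycle directly at $(5)\Rightarrow(1)$. Instead it proves $(5)\Rightarrow(4)$ using inner-product compatibility of $\mathcal{L}_t^+$ (\cref{lem:ipc}) and the general \cref{thm:ipc}: because $\langle \boldsymbol{R}_G,\boldsymbol{S}_G\rangle=\tr(\boldsymbol{R}_G\boldsymbol{S}_G^*)$ equals $\soe(\boldsymbol{T}_G)$ for some $\boldsymbol{T}\in\mathcal{L}_t^+$, a Gram--Schmidt argument produces a well-defined vector space isomorphism $\phi$ sending $\boldsymbol{S}_G\mapsto\boldsymbol{S}_H$, which one then checks is a $t$-equivalence. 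From $(4)$ one reaches $(3)$ by the non-trivial construction $\Phi\coloneqq\widehat{\phi}\circ\Pi$, where $\widehat{\phi}(X)=UXU^*$ for a unitary $U$ (obtained via \cref{lem:mg-A.1}) and $\Pi$ is the orthogonal projection onto $\mathcal{A}^t_G$; one must then argue that $C_\Phi\in\mathcal{DNN}$, for which entry-wise non-negativity is the crux (cf.\ the proof of \cref{thm:equivalent}). You are missing both the inner-product compatibility idea and the unitary-plus-projection construction.

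Second, you call $(2)\Rightarrow(4)$ ``essentially bookkeeping'', but it is not. Restricting the $\mathcal{DNN}$-isomorphism map $\Phi$ to $\mathcal{A}^t_G$ does not automatically give an algebra homomorphism: multiplicativity $\Phi(MN)=\Phi(M)\Phi(N)$ requires \cref{lem:mg-4.9}, which uses that $\Phi$ is completely positive, trace-preserving and unital, together with $\Phi(\boldsymbol{A}_G)=\boldsymbol{A}_H$ and $\Phi^*(\boldsymbol{A}_H)=\boldsymbol{A}_G$. Preservation of \emph{arbitrary} Schur products (not just with atomic tensors) is a further non-trivial step (Lemma~6.2 of \cite{mancinska_graph_2020}). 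Relatedly, your explanation that ``$A,B\in\mathcal{DNN}\Rightarrow A\odot B\in\mathcal{DNN}$ has no analogue for general $\mathcal{S}_+$'' is incorrect: the Schur product theorem gives $A,B\in\mathcal{S}_+\Rightarrow A\odot B\in\mathcal{S}_+$. The real distinction is that a $\mathcal{DNN}$-isomorphism map preserves all Schur products, whereas an $\mathcal{S}_+$-isomorphism map is only guaranteed to satisfy \eqref{i2}, i.e.\ Schur products with atomic tensors.
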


    Variants of the notions in items 2--4 have already been defined  for the case $t=1$ in  \cite{mancinska_graph_2020}.
    Our contribution amounts to extending these definitions to the entire Lasserre hierarchy. 
    A recurring theme in this context is accounting for additional symmetries. 
    The variables $y_I$ of the Lasserre system of equations, cf.\  \cref{def:lasserre}, are indexed by sets of vertex pairs rather than by tuples of such. 
    Hence, when passing from such variables to tuple-indexed matrices, one must impose the additional symmetries arising this way.
    This is formalised at various points using an action of the symmetric group on the axes of the matrices.
    In the case $t = 1$, 
    such a set-up is not necessary since indices $I$ are of size at most $2$ and all occurring matrices can be taken to be invariant under transposition.

    In the subsequent sections, \cref{thm:summary,thm:summary-pos} will be proven in parallel. The equivalence of items 1 and 2, 2 and 3, and 3 and 4 are established in \cref{sec:connection-lasserre}, \cref{sec:choi}, and \cref{sec:matrix-algebras}, respectively. The statements on homomorphism indistinguishability are proven in \cref{sec:hi}.

    \subsection{Isomorphism Relaxations via Matrix Families}
	\label{sec:K-isomorphic}

    In this section,
    as a first step towards proving \cref{thm:summary,thm:summary-pos}, the notion of level-$t$ $\mathcal{K}$-isomorphic graphs for arbitrary families of matrices $\mathcal{K}$ is introduced. 
    In \cite{mancinska_graph_2020}, level-$1$ $\mathcal{K}$-isomorphic graphs where studied for various families of matrices $\mathcal{K}$.
    In this work, the main interest lies on the family of positive semidefinite matrices $\mathcal{PSD}$ and the family of entry-wise non-negative positive semidefinite matrices $\mathcal{DNN}$.
    Level-$t$ isomorphism for these families is proven to correspond to  $\simeq_t^{\text{L}}$ and $\simeq_t^{\text{L}^+}$ respectively, cf.\  \cref{thm:lasserre-s+-iso,thm:lasserre-dnn-iso}.

	\begin{definition} \label{def:K-isomorphic}
		Let $\mathcal{K}$ be a family of matrices. 
        Graphs $G$ and $H$ are said to be \emph{level-$t$ $\mathcal{K}$-isomorphic}, in symbols $G \cong^t_\mathcal{K} H$, if there is a matrix $M \in \mathcal{K}$ with rows and columns indexed by $(V(G) \times V(H))^t$ such that for every $g_1h_1 \dots g_th_t, g_{t+1}h_{t+1} \dots g_{2t}h_{2t} \in (V(G) \times V(H))^t$ the following equations hold:
		\begin{align}
            \intertext{For every $i \in [2t]$,}
			\sum_{g_i \in V(G)} M_{g_1h_1 \dots g_th_t, g_{t+1}h_{t+1} \dots g_{2t}h_{2t}} &= \sum_{h_i\in V(H)} M_{g_1h_1 \dots g_th_t, g_{t+1}h_{t+1} \dots g_{2t}h_{2t}}, \label{m1}\\
			\sum_{g_1, \dots, g_{2t} \in V(G)} M_{g_1h_1 \dots g_th_t, g_{t+1}h_{t+1} \dots g_{2t}h_{2t}}&= 1 = 	\sum_{h_1, \dots, h_{2t} \in V(H)} M_{g_1h_1 \dots g_th_t, g_{t+1}h_{t+1} \dots g_{2t}h_{2t}}. \label{m2}
            \intertext{If $\rel_G(g_1, \dots, g_{2t}) \neq  \rel_H(h_1, \dots, h_{2t})$ then }
			M_{g_1h_1 \dots g_th_t, g_{t+1}h_{t+1} \dots g_{2t}h_{2t}} &= 0. \label{m3}
            \intertext{For all  $\sigma \in \mathfrak{S}_{2t}$,}
			M_{g_1h_1 \dots g_th_t, g_{t+1}h_{t+1} \dots g_{2t}h_{2t}} &= M_{g_{\sigma(1)}h_{\sigma(1)} \dots g_{\sigma(t)}h_{\sigma(t)}, g_{\sigma(t+1)}h_{\sigma(t+1)} \dots g_{\sigma(2t)}h_{\sigma(2t)}} \label{m4}.
		\end{align}
	\end{definition}
	
	Note that for  $t = 1$ and any family of matrices $\mathcal{K}$ closed under taking transposes \cref{m4} is vacuous.

    Systems of equations comparing graphs akin to \cref{m1,m2,m3,m4} were also studied by~\cite{grohe_homomorphism_2022}.
    Feasibility of such equations is typically invariant under taking the complements of the graphs as remarked below. 
    This semantic property of the relation $\cong^t_\mathcal{K}$ is relevant in the context of homomorphism indistinguishability as shown by \cite{seppelt_logical_2023}.

	\begin{remark} \label{rem:complements}
		For a simple graph $G$, write $\overline{G}$ for its complement, i.e.\  $V(\overline{G}) \coloneqq V(G)$ and $E(\overline{G}) \coloneqq \binom{V(G)}{2} \setminus E(G)$. For all graphs $G$ and $H$ and 
		$g_1, \dots, g_{2t} \in V(G)$, $h_1, \dots, h_{2t} \in V(H)$, it holds that
		\[
			\rel_G(g_1, \dots, g_{2t}) = \rel_H(h_1, \dots, h_{2t})
			\iff
			\rel_{\overline{G}}(g_1, \dots, g_{2t}) = \rel_{\overline{H}}(h_1, \dots, h_{2t}).
		\]
		Thus, $G \cong^t_\mathcal{K} H$ if and only if $\overline{G} \cong^t_\mathcal{K} \overline{H}$ for all families of matrices $\mathcal{K}$ and $t \in \mathbb{N}$.
	\end{remark}

	\subsection{Choi Matrices and Isomorphism Maps}
    \label{sec:choi}
    
    In this section, an alternative characterisation for level-$t$ $\mathcal{K}$-isomorphism is given.
    Intuitively, the indices of the matrix $M \in \mathbb{C}^{(V(G) \times V(H))^t \times (V(G) \times V(H))^t}$ 
    from \cref{def:K-isomorphic} are regrouped yielding a linear map $\Phi \colon \mathbb{C}^{V(G)^t \times V(G)^t} \to \mathbb{C}^{V(H)^t \times V(H)^t}$. In linear algebraic terms, $M$ is the Choi matrix of $\Phi$.
    The map $\Phi$ will later be interpreted as a function sending homomorphism tensors of $(t,t)$-bilabelled graphs $\boldsymbol{F}_G \in \mathbb{C}^{V(G)^t \times V(G)^t}$ with respect to $G$ to their counterparts $\boldsymbol{F}_H$ for $H$.
    
    The most basic bilabelled graphs, so-called \emph{atomic} graphs, make their first appearance in \cref{thm4.7}.
    These graphs are used to reformulate \cref{m3,lassere4}. The atomic graphs are also the graphs which the sets $\mathcal{L}_t$ and $\mathcal{L}_t^+$ of \cref{thm:main2,thm:main3} are generated by, cf.\  \cref{def:l-lplus}. Examples are depicted in \cref{fig:atomic,fig:atomic-t1}.

    \begin{figure}
        \centering
        \begin{subfigure}[t]{.3\linewidth}
            \centering
            \includegraphics[page=1,scale=1.2]{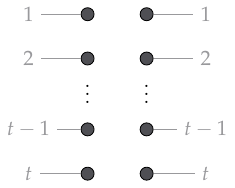}
            \caption{$\boldsymbol{J}$}
        \end{subfigure}
        \begin{subfigure}[t]{.3\linewidth}
            \centering
            \includegraphics[page=2,scale=1.2]{figure-basal.pdf}
            \caption{$\boldsymbol{A}^{2,2t}$}
        \end{subfigure}
        \begin{subfigure}[t]{.3\linewidth}
            \centering
            \includegraphics[page=3,scale=1.2]{figure-basal.pdf}
            \caption{$\boldsymbol{I}^{2,2t}$}
        \end{subfigure}
        \caption{Examples of the atomic graphs from \cref{def:atomic}. The gray lines (the \emph{wires} \cite{mancinska_quantum_2019}) indicate the in-labels (left) and out-labels (right). }
        \label{fig:atomic}
    \end{figure}
 
	\begin{definition}  \label{def:atomic}\label{obs:atomic}
		Let $t \geq 1$.
		A $(t,t)$-bilabelled graph $\boldsymbol{F} = (F, \boldsymbol{u}, \boldsymbol{v})$ is \emph{atomic} if all its vertices are labelled. Write $\mathcal{A}_t$ for the set of $(t,t)$-bilabelled atomic graphs. Note that the  set of atomic graphs $\mathcal{A}_t$ is generated under parallel composition by the graphs
		\begin{itemize}
			\item $\boldsymbol{J} \coloneqq (J, (1,\dots, t), (t+1, \dots, 2t))$ with $V(J) = [2t]$, $E(J) = \emptyset$,
			\item $\boldsymbol{A}^{ij} \coloneqq (A^{ij}, (1,\dots, t), (t+1, \dots, 2t))$ with $V(A^{ij}) = [2t]$, $E(A^{ij}) = \{ij\}$ for $1 \leq i < j \leq 2t$,
			\item $\boldsymbol{I}^{ij}$ for $1 \leq i < j \leq 2t$ which is obtained from $\boldsymbol{A}^{ij}$ by contracting the edge $ij$ and removing the resulting loop.
		\end{itemize}
	\end{definition}

    The following \cref{thm4.7} relates the properties of $\Phi$ and $M$. 
    In \cref{i3}, $J$ denotes the all-ones matrix of appropriate dimension.

	\begin{theorem} \label{thm4.7}
		Let $t \geq 1$. Let $G$ and $H$ be graphs and $\mathcal{K} \in \{\mathcal{DNN}, \mathcal{PSD}\}$ be a family of matrices.
		Let $\Phi \colon \mathbb{C}^{V(G)^t \times V(G)^t} \to \mathbb{C}^{V(H)^t \times V(H)^t}$ be a linear map. Then the following are equivalent.
		\begin{enumerate}
			\item The Choi matrix $C_\Phi$ of $\Phi$ satisfies \cref{m1,m2,m3,m4} and $C_\Phi \in \mathcal{K}$,\label{item:thm4.7.1}
			\item $\Phi$ is a \emph{level-$t$ $\mathcal{K}$-isomorphism map from $G$ to $H$}, i.e.\  it satisfies\label{item:thm4.7.2}
			\begin{align}
				&\Phi \text{ is completely } \mathcal{K}\text{-preserving}, \label{i1}\\
				&\Phi(\boldsymbol{A}_G \odot X) = \boldsymbol{A}_H \odot \Phi(X) \text{ for all atomic } \boldsymbol{A} \in \mathcal{A}_t \text{ and all } X \in \mathbb{C}^{V(G)^t \times V(G)^t},\label{i2}\\
				&\Phi(J) = J = \Phi^*(J),\label{i3}\\
				&\Phi(X^\sigma) = \Phi(X)^\sigma \text{ for all } \sigma \in \mathfrak{S}_{2t} \text{ and all } X \in \mathbb{C}^{V(G)^t \times V(G)^t}.\label{i4}
			\end{align}
			\item $\Phi^*$ is a level-$t$ $\mathcal{K}$-isomorphism map from $H$ to $G$.\label{item:thm4.7.3}
		\end{enumerate}
	\end{theorem}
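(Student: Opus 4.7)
The plan is to prove the chain $(1) \Leftrightarrow (2) \Leftrightarrow (3)$ by systematic unpacking of the Choi--Jamiolkowski correspondence, followed by a duality argument exchanging $\Phi$ and $\Phi^*$. The starting point is to re-index the rows and columns of $C_\Phi \in \mathbb{C}^{mn \times mn}$ (with $m = \abs{V(G)}^t$ and $n = \abs{V(H)}^t$) as elements of $(V(G) \times V(H))^t$, grouping the pairs $(g_i, h_i)$ together, so that
\[
M_{g_1h_1 \dots g_th_t,\, g_{t+1}h_{t+1} \dots g_{2t}h_{2t}} = \Phi(E_{\boldsymbol{g}_1, \boldsymbol{g}_2})_{\boldsymbol{h}_1, \boldsymbol{h}_2},
\]
where $\boldsymbol{g}_1 = (g_1, \dots, g_t)$, $\boldsymbol{g}_2 = (g_{t+1}, \dots, g_{2t})$ and similarly for $\boldsymbol{h}_1, \boldsymbol{h}_2$.

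For $(1) \Leftrightarrow (2)$, each constraint on $M$ translates directly into one on $\Phi$. Membership $C_\Phi \in \mathcal{S}_+$ is Choi's classical characterisation of complete positivity, while $C_\Phi \in \mathcal{DNN}$ additionally encodes entrywise non-negativity of the $\Phi(E_{\boldsymbol{g}_1, \boldsymbol{g}_2})$, which by linearity yields complete $\mathcal{DNN}$-preservation, matching \cref{i1}. The symmetry \cref{m4} $\Leftrightarrow$ \cref{i4} is immediate: the $\mathfrak{S}_{2t}$-action on the paired indices of $M$ matches the action on $X$ and $\Phi(X)$ in \cref{i4}. For \cref{m2} $\Leftrightarrow$ \cref{i3}, one computes $\Phi(J)_{\boldsymbol{h}_1, \boldsymbol{h}_2} = \sum_{\boldsymbol{g}_1, \boldsymbol{g}_2} M$ and $\Phi^*(J)_{\boldsymbol{g}_1, \boldsymbol{g}_2} = \sum_{\boldsymbol{h}_1, \boldsymbol{h}_2} M$, so $\Phi(J) = J = \Phi^*(J)$ is exactly \cref{m2}. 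The relation-type condition \cref{m3} $\Leftrightarrow$ \cref{i2} follows because the generators $\boldsymbol{J}, \boldsymbol{A}^{ij}, \boldsymbol{I}^{ij}$ of $\mathcal{A}_t$ have $\{0,1\}$-valued homomorphism tensors precisely encoding edge and identification relations at pairs $(i,j)$, and their Schur products realise the indicator of every relation type. Evaluating \cref{i2} on matrix units $X = E_{\boldsymbol{g}_1, \boldsymbol{g}_2}$ forces $\Phi(E_{\boldsymbol{g}_1, \boldsymbol{g}_2})_{\boldsymbol{h}_1, \boldsymbol{h}_2} = 0$ whenever $\rel_G \neq \rel_H$; conversely, \cref{m3} combined with $(\boldsymbol{A} \odot \boldsymbol{B})_G = \boldsymbol{A}_G \odot \boldsymbol{B}_G$ recovers \cref{i2} for all atomic $\boldsymbol{A}$ by linearity.

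The most delicate ingredient is the marginal equality \cref{m1}. I would derive it by noting that $\sum_{g_i} M = \Phi(\sum_{g_i} E_{\boldsymbol{g}_1, \boldsymbol{g}_2})_{\boldsymbol{h}_1, \boldsymbol{h}_2}$, expressing this rank-one ``partial all-ones'' input via the Schur structure enforced by \cref{i2}, and then leveraging $\Phi(J) = J = \Phi^*(J)$ from \cref{i3} together with the $\mathfrak{S}_{2t}$-symmetry \cref{i4} to transfer the marginalisation from the $g$-side to the $h$-side. Carrying out this derivation, together with the accompanying $\mathfrak{S}_{2t}$-symmetric bookkeeping throughout, is the main technical hurdle beyond the $t = 1$ setting already treated in~\cite{mancinska_graph_2020}.

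For $(2) \Leftrightarrow (3)$, the key observation is that $C_{\Phi^*}$ is obtained from $C_\Phi$ by the involution swapping $g_i \leftrightarrow h_i$ within each paired coordinate, explicitly $C_{\Phi^*}((\boldsymbol{h}_1, \boldsymbol{g}_1), (\boldsymbol{h}_2, \boldsymbol{g}_2)) = C_{\Phi}((\boldsymbol{g}_1, \boldsymbol{h}_1), (\boldsymbol{g}_2, \boldsymbol{h}_2))$. Each of \cref{m1,m2,m3,m4} is visibly invariant under this swap: \cref{m1} interchanges the two summation sides, \cref{m2} is symmetric in $g, h$, \cref{m3} is symmetric in $G, H$, and \cref{m4} acts pairwise; membership in $\mathcal{S}_+$ or $\mathcal{DNN}$ is likewise preserved under coordinate swaps. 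Hence $(1)$ for $\Phi$ is equivalent to $(1)$ for $\Phi^*$, and applying $(1) \Leftrightarrow (2)$ to $\Phi^*$ yields $(3)$.
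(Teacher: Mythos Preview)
Your approach mirrors the paper's: translate each condition on $M = C_\Phi$ into a condition on $\Phi$ via the Choi correspondence, establishing the pairings $C_\Phi \in \mathcal{K} \leftrightarrow$ \cref{i1}, \cref{m3} $\leftrightarrow$ \cref{i2}, \cref{m2} $\leftrightarrow$ \cref{i3}, and \cref{m4} $\leftrightarrow$ \cref{i4}. The paper packages the first two via \cref{lem:mg-4.4,lem:mg-4.5}; for $(2) \Leftrightarrow (3)$, your swap $g_i \leftrightarrow h_i$ on the Choi matrix is exactly what those lemmas encode.

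There is, however, a real gap in your handling of \cref{m1}. You correctly flag it as the delicate step (the paper's own proof, by contrast, is simply silent on \cref{m1}), but your proposed derivation uses only \cref{i2,i3,i4}, and these alone cannot yield \cref{m1}. Concretely, take $t=1$ and $G = H$ the edgeless graph on $\{a,b\}$, and let $M$ be the symmetric $4 \times 4$ matrix (rows and columns indexed $aa, ab, ba, bb$) with diagonal entries $1/2$, with $M_{ab,ba} = M_{ba,ab} = 1$, and all other entries zero. This $M$ satisfies \cref{m2,m3,m4}, so the associated $\Phi$ satisfies \cref{i2,i3,i4}; yet \cref{m1} fails, since $\sum_{g_1} M_{g_1 a,\, ab} = 1 \neq 1/2 = \sum_{h_1} M_{a h_1,\, ab}$. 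The only thing that rules this $M$ out is that it is not positive semidefinite. Hence any argument for $(2) \Rightarrow \cref{m1}$ must invoke \cref{i1}; the natural route is a Gram-vector norm computation of the kind carried out in \cref{cl1} within the proof of \cref{thm:lasserre-s+-iso}, not the Schur-product and symmetry manipulations you sketch.
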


    We remark that \cref{thm4.7} and in particular its \cref{i3} have brought us closer to interpreting the Lasserre system of equation from the perspective of homomorphism indistinguishability.
    As argued in \cref{rem:num-vertices}, the map $\Phi$, which will be understood as mapping homomorphism tensors $\boldsymbol{F}_G$ to $\boldsymbol{F}_H$, is sum-preserving. 
    Since the sum of the entries of these tensors equals the number of homomorphisms from their underlying unlabelled graphs to $G$ and $H$, respectively,
    this is relevant
    for establishing a connection between $\mathcal{K}$-isomorphism maps  and homomorphism indistinguishability.
    
	\begin{remark} \label{rem:num-vertices}
		If a linear map $\Phi \colon \mathbb{C}^{n \times n} \to \mathbb{C}^{m \times m}$ is such that $J = \Phi^*(J)$ then it is \emph{sum-preserving}, i.e.\  $\soe(X) = \soe(\Phi(X))$ for all $X \in \mathbb{C}^{n \times n}$.
		Indeed, $\soe(X) = \left< X, J \right> = \left< X, \Phi^*(J) \right> = \left< \Phi(X), J \right> = \soe( \Phi(X))$ where $\left< A, B \right> \coloneqq \tr(AB^*)$.
		In particular, if there is $\Phi$ satisfying \cref{i2,i3} for graphs $G$ and $H$ then $|V(G)| = |V(H)|$.
	\end{remark}
	
    Equipped with \cref{rem:num-vertices}, we conduct the proof of \cref{thm4.7}.
    
    \begin{proof}[Proof of Theorem~\ref{thm4.7}]
        The equivalence of \cref{item:thm4.7.2,item:thm4.7.3} follows immediately from \cref{lem:mg-4.4,lem:mg-4.5}.

        For the equivalence of \cref{item:thm4.7.1,item:thm4.7.2}, first note that, by \cref{lem:mg-4.4}, $C_\Phi \in \mathcal{K}$ if and only if Property~\eqref{i1} holds.
        Moreover,
        for $g_1, \dots, g_{2t} \in V(G)$ and $h_1, \dots, h_{2t} \in V(H)$, the assertions
        \[
        \forall \boldsymbol{A} \in \mathcal{A}(t,t),\quad \boldsymbol{A}_G(g_1 \dots g_t, g_{t+1} \dots g_{2t})
        =
        \boldsymbol{A}_H(h_1 \dots h_t, h_{t+1} \dots h_{2t})
        \]
        and $\rel_G(g_1, \dots, g_{2t}) = \rel_H(h_1, \dots, h_{2t})$ are equivalent. By \cref{lem:mg-4.5},
        \cref{i2,m3} are equivalent. 
        Furthermore, \cref{i4,m4}  and \cref{i3,m2} are respectively equivalent.
        
        Finally, we argue that \cref{item:thm4.7.2,item:thm4.7.3} imply \cref{m1}.
        To that end, consider the atomic graph $\boldsymbol{K}^j \in \mathcal{A}(t,t)$ for $j \in [t]$ as defined in \cref{eq:kj} and depicted by \cref{fig:kj}.
        \begin{figure}
            \centering
            \includegraphics[page=4,scale=1.2]{figure-basal.pdf}
            \caption{The atomic graph $\boldsymbol{K}^j$ as defined in \cref{eq:kj}.}
            \label{fig:kj}
        \end{figure}
        \begin{equation}\label{eq:kj}
            \boldsymbol{K}^j \coloneqq \boldsymbol{I}^{1, t+1} \odot \dots \odot \boldsymbol{I}^{j-1, t+j-1} \odot \boldsymbol{I}^{j+1, t+j+1} \odot \dots \odot \boldsymbol{I}^{t, 2t}. 
        \end{equation}
        In order to apply \cref{lem:mg-4.9}, we first argue that $\Phi$ is trace-preserving. 
        Let 
        \[
            \boldsymbol{I} \coloneqq \boldsymbol{I}^{1, t+1} \odot \dots \odot \boldsymbol{I}^{t, 2t} \in \mathcal{A}(t,t).
        \]
        By \cref{i3,rem:num-vertices}, $\Phi$ is sum-preserving.
        For every $X \in \mathbb{C}^{V(G)^t \times V(G)^t}$, 
        \[
            \tr(\Phi(X))
            = \soe(\boldsymbol{I}_H \odot \Phi(X))
            \stackrel{\eqref{i2}}{=} \soe( \Phi(\boldsymbol{I}_G \odot X))
            = \soe(\boldsymbol{I}_G \odot X)
            = \tr(X).
        \]
        Thus, \cref{lem:mg-4.9,i2} yield that for all $j \in [t]$ and all $X \in \mathbb{C}^{V(G)^t \times V(G)^t}$, 
        \begin{equation} \label{eq:helper-kj}
            \Phi( \boldsymbol{K}^j_G X) = \Phi(\boldsymbol{K}^j_G) \Phi(X)
            \quad \text{and} \quad
            \Phi(  X\boldsymbol{K}^j_G) = \Phi(X)\Phi(\boldsymbol{K}^j_G).
        \end{equation}
        Next, we substitute standard basis elements for $X$ in \cref{eq:helper-kj}.
        For $g_1,\dots, g_{2t} \in V(G)$, write $E^{g_1\dots g_{2t}} \in \mathbb{C}^{V(G)^t \times V(G)^t}$ for the corresponding standard basis vector.
        To ease notation, we verify \cref{m1} for $i=1$.
        For all $g_1,\dots, g_{2t} \in V(G)$ and $h_1, \dots, h_{2t} \in V(H)$,
        \begin{align*}
            \sum_{g \in V(G)} M_{gh_1 g_2h_2 \dots g_{2t} h_{2t}}
            &= 	\sum_{g \in V(G)} \Phi_{h_1\dots h_{2t}, gg_2 \dots g_{2t}} \\
            &= \sum_{g\in V(G)} \Phi(E^{gg_2\dots g_{2t}})_{h_1\dots h_{2t}} \\
            &= \Phi(\boldsymbol{K}^1_G E^{g_1\dots g_{2t}})_{h_1\dots h_{2t}} \\
            & \stackrel{\mathclap{\eqref{eq:helper-kj}}}{=} \  (\Phi(\boldsymbol{K}^1_G)  \Phi(E^{g_1\dots g_{2t}}))_{h_1\dots h_{2t}} \\
            & \stackrel{\mathclap{\eqref{i2}}}{=} \  ( \boldsymbol{K}^1_H  \Phi(E^{g_1\dots g_{2t}}))_{h_1\dots h_{2t}} \\
            &= \sum_{h \in V(H)} \Phi_{hh_2 \dots h_{2t}, g_1 \dots g_{2t}} \\
            &= \sum_{h \in V(G)} M_{g_1h g_2h_2 \dots g_{2t} h_{2t}},
        \end{align*}
        as desired.
	\end{proof}

	\subsection{From \texorpdfstring{$\mathcal{K}$}{K}-Isomorphism Maps to the Lasserre Hierarchy }
    \label{sec:connection-lasserre}

    By the following \cref{thm:lasserre-s+-iso,thm:lasserre-dnn-iso}, the notions introduced in \cref{def:K-isomorphic,thm4.7} are equivalent to the object of our main interest, namely feasibility of the level-$t$ Lasserre relaxation with and without non-negativity constraints. Our results extend those of \cite[Lemma~10.1]{mancinska_graph_2020} to the entire Lasserre hierarchy.

	\begin{theorem} \label{thm:lasserre-s+-iso}
		Let $t \geq 1$.
		Two graphs $G$ and $H$ are level-$t$ $\mathcal{PSD}$-isomorphic if and only if  $G \simeq_t^{\textup{L}} H $. \end{theorem}

    \begin{proof}Suppose that $(y_I)_{I \in \binom{V(G) \times V(H)}{ \leq 2t}}$ is a solution to \cref{lassere1,lassere2,lassere3,lassere4,lassere5}. 
		It is argued that the matrix defined via $M_{g_1h_1 \dots g_th_t, g_{t+1}h_{t+1} \dots g_{2t}h_{2t}} \coloneqq y_{\{g_1h_1, \dots, g_{2t}h_{2t}\}}$ satisfies \cref{m1,m2,m3,m4}. \Cref{m3} follows directly from \cref{lassere4}. \Cref{m4} is immediate from the definition.
		
		By \cref{lassere1}, let $v_I$ for $I \in \binom{V(G) \times V(H)}{ \leq t}$ be vectors such that $y_{I \cup J} = \left< v_I, v_J \right>$ for $I, J \in \binom{V(G) \times V(H)}{ \leq t}$. Then
		\[
			M_{g_1h_1 \dots g_th_t, g_{t+1}h_{t+1}} = y_{\{g_1h_1, \dots, g_{2t}h_{2t}\}} = \left< v_{\{g_1h_1, \dots, g_{t}h_{t}\}}, v_{\{g_{t+1}h_{t+1}, \dots, g_{2t}h_{2t}\}} \right>.
		\]
		Thus, $M$ is positive semidefinite.
		It remains to verify \cref{m1,m2}.
		
		\begin{claim} \label{cl1}
			Every $I \in \binom{V(G) \times V(H)}{\leq t-1}$ satisfies $\sum_{g \in V(G)} v_{I \cup \{gh\}} = v_I = \sum_{h \in V(H)} v_{I \cup \{gh\}}$.
		\end{claim}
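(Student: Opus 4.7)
The plan is to use the standard Gram-vector trick for SDP hierarchies: to prove that a linear combination of Gram vectors vanishes, show that its squared norm is zero, and then translate this into a scalar identity among the $y_I$'s which is handled by the Lasserre equations in \cref{def:lasserre}. Specifically, I fix $I \in \binom{V(G) \times V(H)}{\leq t - 1}$ and $h \in V(H)$, set
\[
w \coloneqq v_I - \sum_{g \in V(G)} v_{I \cup \{gh\}},
\]
and aim to show $\langle w, w \rangle = 0$. Note that every set appearing here has size at most $|I|+1 \leq t$, so all these Gram vectors are defined, and inner products of them correspond to $y_J$'s with $|J| \leq 2t$.

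Expanding and using $\langle v_J, v_{J'} \rangle = y_{J \cup J'}$ gives
\[
\langle w, w \rangle \;=\; y_I \;-\; 2\sum_{g \in V(G)} y_{I \cup \{gh\}} \;+\; \sum_{g, g' \in V(G)} y_{I \cup \{gh, g'h\}}.
\]
The key observation is that in the double sum, whenever $g \neq g'$, the set $I \cup \{gh, g'h\}$ contains the pairs $gh$ and $g'h$ which map two distinct vertices of $G$ to the same vertex $h$ of $H$; hence this set is not a partial isomorphism, and \cref{lassere4} forces the corresponding $y$-value to vanish. Only the diagonal terms $g = g'$ survive, contributing $\sum_g y_{I \cup \{gh\}}$. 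Using \cref{lassere3} (applicable since $|I| \leq t - 1 \leq 2t - 2$), this diagonal contribution equals $y_I$, and likewise $\sum_g y_{I \cup \{gh\}} = y_I$ in the middle term, giving
\[
\langle w, w \rangle \;=\; y_I - 2 y_I + y_I \;=\; 0,
\]
so $w = 0$. The identity $v_I = \sum_{h \in V(H)} v_{I \cup \{gh\}}$ for any fixed $g \in V(G)$ is proven symmetrically, now invoking \cref{lassere2} and using that $\{gh, gh'\}$ with $h \neq h'$ is not a partial isomorphism.

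The only subtle point to be careful about is the bookkeeping: we must verify that all indices $I \cup \{gh, g'h\}$ have size $\leq 2t$ so that $y_{I \cup \{gh, g'h\}}$ is actually a variable of the level-$t$ relaxation, which holds because $|I| \leq t - 1$. Everything else is direct substitution into the Gram decomposition guaranteed by \cref{lassere1}, combined with the crucial observation that collisions $g=g'$ (respectively $h=h'$) on the non-fixed coordinate yield indices that fail the partial isomorphism test \cref{lassere4}.
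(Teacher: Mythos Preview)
Your proof is correct and follows essentially the same approach as the paper: expand $\lVert v_I - \sum_g v_{I\cup\{gh\}}\rVert^2$ via the Gram relation $\langle v_J,v_{J'}\rangle=y_{J\cup J'}$, kill the off-diagonal terms using \cref{lassere4}, and collapse the remaining sums with \cref{lassere3} (respectively \cref{lassere2}). The paper organises the computation slightly differently, first evaluating $\bigl\langle \sum_g v_{I\cup\{gh\}},\sum_g v_{I\cup\{gh\}}\bigr\rangle$ and $\bigl\langle v_I,\sum_g v_{I\cup\{gh\}}\bigr\rangle$ separately before combining, but the substance is identical.
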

		\begin{subproof}
  		    Recall that $y_{I \cup J} = \left< v_I, v_J \right>$ for $I, J \in \binom{V(G) \times V(H)}{ \leq t}$.
			By \cref{lassere4,lassere3},
			\begin{align*}
				\left\langle \sum_{g \in V(G)} v_{I \cup \{gh\}}, \sum_{g \in V(G)} v_{I \cup \{gh\}} \right\rangle
				&= \sum_{g, g' \in V(G)} \left\langle v_{I \cup \{gh\}}, v_{I \cup \{g'h\}} \right\rangle \\
				&= \sum_{g, g' \in V(G)} y_{I \cup \{gh\} \cup \{g'h\}} \\
				&\stackrel{\eqref{lassere4}}{=} \sum_{g \in V(G)} y_{I \cup \{gh\}}
				\stackrel{\eqref{lassere3}}{=} y_I.
			\end{align*}
			Observe that \cref{lassere3} indeed applies since $t - 1 \leq 2t -2$ for all $t \geq 1$. Moreover,
			\begin{align*}
				\left< v_I, \sum_{g \in V(G)} v_{I \cup \{gh\}} \right>
				= \sum_{g \in V(G)} y_{I \cup \{gh\}} = y_I
			\end{align*}
			and hence combining the above equalities,
			\begin{align*}
				\left\lVert v_I - \sum_{g \in V(G)} v_{I \cup \{gh\}} \right\rVert^2 = y_I - 2y_I + y_I = 0
			\end{align*}
			The claim is proven analogously when summation is over $h \in V(H)$.
		\end{subproof}
		\cref{cl1,lassere5} imply \cref{m2}. Indeed,
		\begin{align*}
			\sum_{g_1 \dots g_{2t} \in V(G)} M_{g_1h_1 \dots g_th_t, g_{t+1}h_{t+1} \dots g_{2t}h_{2t}}
			&= \sum_{g_1 \dots g_{2t} \in V(G)} y_{\{g_1h_1, \dots, g_th_t\} \cup \{ g_{t+1}h_{t+1} \dots g_{2t}h_{2t}\}} \\
			&= \sum_{g_1 \dots g_{2t} \in V(G)} \left< v_{\{g_1h_1, \dots, g_th_t\}}, v_{\{ g_{t+1}h_{t+1} \dots g_{2t}h_{2t}\}} \right> \\
			&= \left< v_\emptyset, v_\emptyset \right> \\
			&= y_\emptyset\\
			&= 1.
		\end{align*}
		Moreover, for \cref{m1}, letting $i = 1$ to ease notation,
		\begin{align*}
			\sum_{g_1 \in V(G)} M_{g_1h_1 \dots g_th_t, g_{t+1}h_{t+1} \dots g_{2t}h_{2t}}
			&= \sum_{g_1 \in V(G)} y_{\{g_1h_1\} \cup \{g_2h_2 \dots g_th_t\} \cup \{g_{t+1}h_{t+1} \dots g_{2t}h_{2t}\}} \\
			&= \sum_{g_1 \in V(G)} \left<  v_{\{g_1h_1\} \cup \{g_2h_2 \dots g_th_t\} }, v_{\{g_{t+1}h_{t+1} \dots g_{2t}h_{2t}\}} \right> \\
			&= \sum_{h_1 \in V(G)} \left<  v_{\{g_1h_1\} \cup \{g_2h_2 \dots g_th_t\} }, v_{\{g_{t+1}h_{t+1} \dots g_{2t}h_{2t}\}} \right> \\
			&= \sum_{h_1 \in V(G)} M_{g_1h_1 \dots g_th_t, g_{t+1}h_{t+1} \dots g_{2t}h_{2t}}.
		\end{align*}
		This concludes the proof that $M$ satisfies \cref{m1,m2,m3,m4}.
	
		Conversely, let $v_{\vec{I}}$ for $\vec{I} \in (V(G) \times V(H))^t$ denote the Gram vectors of a matrix $M$ satisfying \cref{m1,m2,m3,m4}.
		Define $v_I \coloneqq v_{\vec{I}}$ for $|I| = t$ and any ordering. By \cref{m4}, $v_I$ is well-defined. Let furthermore,
		\[
			v_I^{g_{i+1}\dots g_{t}} \coloneqq \sum_{h_{i+1}, \dots, h_t \in V(H)} v_{\vec{I}g_{i+1}h_{i+1}\dots g_th_t}
		\]
		for $I \in \binom{V(G) \times V(H)}{i}$ and $g_{i+1} \dots g_t \in V(G)^{t-i}$.
		Define $v_I^{h_{i+1}\dots h_{t}}$ analogously.
		
		\begin{claim} \label{cl2}
			For all $g_{i+1} \dots g_t, g'_{i+1} \dots g'_t \in V(G)^{t-i}$,
            it holds that
			$v_I^{g_{i+1}\dots g_{t}} = v_I^{g'_{i+1}\dots g'_{t}}$.
		\end{claim}
		\begin{subproof}
			By definition, the term $\left\lVert v_I^{g_{i+1}\dots g_{t}} - v_I^{g'_{i+1}\dots g'_{t}} \right\rVert^2$ is equal to
			\begin{align*}
				\sum_{\substack{h_{i+1}, \dots, h_t \in V(H), \\ h'_{i+1}, \dots, h'_t \in V(H)}} \left(  M_{\vec{I}g_{i+1}h_{i+1}\dots g_th_t, \vec{I}g_{i+1}h'_{i+1}\dots g_th'_t} 
                - 2 M_{\vec{I}g_{i+1}h_{i+1}\dots g_th_t, \vec{I}g'_{i+1}h'_{i+1}\dots g'_th'_t} 
                + M_{\vec{I}g'_{i+1}h_{i+1}\dots g'_th_t, \vec{I}g'_{i+1}h'_{i+1}\dots g'_th'_t} \right).
			\end{align*}
			By \cref{m1}, this expression is zero.
		\end{subproof}
	
		By \cref{cl2}, the reference to $g_{i+1}\dots g_{t}$ can be dropped, yielding vectors $v_I^G$ and $v_I^H$. It follows that
		\begin{align*}
			|V(G)|^{t-i} v_I^G 
			&= \sum_{g_{i+1}\dots g_{t} \in V(G)^{t-i}} v_I^{g_{i+1}\dots g_{t}}
			= \sum_{\substack{g_{i+1}\dots g_{t} \in V(G)^{t-i}\\ h_{i+1}\dots h_{t} \in V(H)^{t-i}}} v_{\vec{I}g_{i+1}h_{i+1}\dots g_{t}h_t} \\
			&= \sum_{h_{i+1}\dots h_{t} \in V(H)^{t-i}} v_I^{h_{i+1}\dots h_{t}}
			= |V(H)|^{t-i} v_I^H.
		\end{align*}
		This implies that $v_I^G = v_I^H$ since $G$ and $H$ have the same number of vertices, cf.\  \cref{rem:num-vertices}. Let $v_I \coloneqq v_I^G = v_I^H$.
        The following \cref{cl:set-ip} is immediate from \cref{m4}:
  
		\begin{claim} \label{cl:set-ip}
			If $I \cup J = I' \cup J'$ for $I, I', J, J' \in \binom{V(G) \times V(H)}{\leq t}$ then $\left< v_I, v_J \right> = \left< v_{I'}, v_{J'} \right>$.
		\end{claim}
	
		Hence, $y_I$ for $I \in \binom{V(G) \times V(H)}{\leq 2t}$ can be set to $\left< v_{I'}, v_{I''} \right>$ for any $I', I'' \in \binom{V(G) \times V(H)}{\leq t}$ such that $I = I' \cup I''$.
		Then \cref{lassere1,lassere2,lassere3} holds by construction.
      In fact, it follows that \cref{lassere2a,lassere3a} below, which imply \cref{lassere2,lassere3}, hold:
        \begin{align}
            \sum_{h \in V(H)} y_{I \cup \{gh\}} &= y_{I} &&\parbox{9cm}{for all $I \in \binom{V(G) \times V(H)}{\leq 2t-1}$ and all $g \in V(G)$,} \label{lassere2a} \\
            \sum_{g \in V(G)} y_{I \cup \{gh\}} &= y_{I} && \parbox{9cm}{for all $I \in \binom{V(G) \times V(H)}{\leq 2t-1}$ and all $h \in V(H)$.} \label{lassere3a} 	
        \end{align}
		\cref{lassere4} follows from \cref{m3}.
	\end{proof}

    The following \cref{thm:lasserre-dnn-iso} is proven analogously, observing that the construction in the proof of \cref{thm:lasserre-s+-iso} preserves non-negativity in both directions.

	\begin{theorem} \label{thm:lasserre-dnn-iso}
		Let $t \geq 1$. 
		Two graphs $G$ and $H$ are level-$t$ $\mathcal{DNN}$-isomorphic if and only if $G \simeq_t^{\textup{L}^+} H $. \end{theorem}

	\subsection{Isomorphisms between Matrix Algebras}
    \label{sec:matrix-algebras}

    To the two reformulations of $\simeq_t^{\textup{L}}$ and $\simeq_t^{\textup{L}^+}$ from the previous sections, a third characterisation is added in this section.
    It is shown that two graphs are level-$t$ $\mathcal{PSD}$-isomorphic ($\mathcal{DNN}$-isomorphic) if and only if certain matrix algebras associated to them are isomorphic.
    These algebras will be identified as the algebras of homomorphism tensors for graphs from the families $\mathcal{L}_t$ and $\mathcal{L}_t^+$.
    The so-called (partially) coherent algebras considered in this section are natural generalisations of the coherent algebras which are well-studied in the context of the $2$-dimensional Weisfeiler--Leman algorithm \cite{chen_lectures_2018}.

	\subsubsection{Partially Coherent Algebras and \texorpdfstring{$\mathcal{PSD}$}{PSD}-Isomorphism Maps}

	Let $S \subseteq \mathbb{C}^{n^t \times n^t}$. A matrix algebra $\mathcal{A} \subseteq \mathbb{C}^{n^t \times n^t}$ is \emph{$S$-partially coherent} if it is unital, self-adjoint, contains the all-ones matrix, and is closed under Schur products with any matrix in $S$.
    A matrix algebra $\mathcal{A} \subseteq \mathbb{C}^{n^t \times n^t}$ is \emph{self-symmetrical} if for every $A \in \mathcal{A}$ and $\sigma \in \mathfrak{S}_{2t}$ also $A^\sigma \in \mathcal{A}$.
    Note that for $t=1$, an algebra $\mathcal{A}$ is self-symmetrical if for all $A \in \mathcal{A}$ also $A^T \in \mathcal{A}$ where $A^T$ is the transpose of $A$.
	
	\begin{definition} \label{def:pce}
		Given a graph $G$, define its \emph{$t$-partially coherent algebra} $\widehat{\mathcal{A}}^t_G$ as the minimal self-symmetrical $S$-partially coherent algebra where $S$ is the set of homomorphism tensors of $(t,t)$-bilabelled atomic graphs for $G$.
		
		Two $n$-vertex graphs $G$ and $H$ are \emph{partially $t$-equivalent} if there is a \emph{partial $t$-equivalence}, i.e.\  a vector space isomorphism $\phi \colon \widehat{\mathcal{A}}^t_G \to\widehat{\mathcal{A}}^t_H$ such that
		\begin{enumerate}
			\item $\phi(M^*) = \phi(M)^*$ for all $M \in \widehat{\mathcal{A}}^t_G$,
			\item $\phi(MN) = \phi(M)\phi(N)$ for all $M, N \in \widehat{\mathcal{A}}^t_G$,
			\item $\phi(I) = I$, $\phi(\boldsymbol{A}_G) = \boldsymbol{A}_H$ for all $\boldsymbol{A} \in \mathcal{A}_t$, and $\phi(J) = J$,
			\item $\phi(\boldsymbol{A}_G \odot M) = \boldsymbol{A}_H \odot \phi(M)$ for all $\boldsymbol{A} \in \mathcal{A}_t$ and any $M \in \widehat{\mathcal{A}}^t_G$.
			\item $\phi(M^\sigma) = \phi(M)^\sigma$ for all $M \in \widehat{\mathcal{A}}^t_G$ and all $\sigma \in \mathfrak{S}_{2t}$.
		\end{enumerate}
	\end{definition}
 
	The following \cref{thm:partially-equivalent} extends \cite[Theorem~6.2]{mancinska_graph_2020}.

	\begin{theorem} \label{thm:partially-equivalent}
		Let $t \geq 1$.
		Two graphs $G$ and $H$ are partially $t$-equivalent
		if and only if
		there is a level-$t$ $\mathcal{PSD}$-isomorphism map from $G$ to $H$.
	\end{theorem}
	\begin{proof}Let $\Phi \colon \mathbb{C}^{V(G)^t \times V(G)^t} \to \mathbb{C}^{V(H)^t \times V(H)^t}$ be a level-$t$ $\mathcal{PSD}$-isomorphism map from $G$ to $H$, i.e.\  it satisfies \cref{i1,i2,i3,i4}.
		By \cref{rem:num-vertices,i2,i3}, $\Phi(\boldsymbol{A}_G) = \boldsymbol{A}_H$ for all atomic $\boldsymbol{A} \in \mathcal{A}_t$ and $\vert V(G) \rvert = \lvert V(H) \rvert \eqqcolon n$.
		Similarly, $\Phi^*(\boldsymbol{A}_H) = \boldsymbol{A}_G$ for all atomic $\boldsymbol{A}$ by \cref{thm4.7}.
		By \cref{i1,i2}, $\Phi$ is completely positive and unital.
		By \cref{thm4.7}, $\Phi^*(I) = I$ and thus $\Phi$ is trace-preserving  \cite[Lemma~4.2]{mancinska_graph_2020}.
		Furthermore,
		\[
			\Phi(\boldsymbol{A}_G) = \boldsymbol{A}_H, \quad 
			\Phi^*(\boldsymbol{A}_H) = \boldsymbol{A}_G, \quad 
			\Phi(J) = J = \Phi^*(J).
		\]
		for all atomic $\boldsymbol{A} \in \mathcal{A}_t$.
		Thus, \cref{lem:mg-4.9} implies that for any $W \in \mathbb{C}^{V(G)^t\times V(G)^t}$ we have
		$\Phi(\boldsymbol{A}_GW) = \boldsymbol{A}_H\Phi(W)$ and $\Phi(W\boldsymbol{A}_G) = \Phi(W)\boldsymbol{A}_H$ for  all atomic $\boldsymbol{A} \in \mathcal{A}_t$.
		Hence, the restriction of $\Phi$ to $\widehat{\mathcal{A}}^t_G$ witnesses that $G$ and $H$ are partially $t$-equivalent.
		
		Conversely, suppose that $\phi \colon \widehat{\mathcal{A}}^t_G \to\widehat{\mathcal{A}}^t_H$ is as in \cref{def:pce}.
        By \cite[Lemma~5.3]{mancinska_graph_2020}, $\phi$ is trace-preserving.
        By \cref{lem:mg-A.1}, there exists a unitary matrix $U \in \mathbb{C}^{n^t \times n^t}$ such that $\phi(X) = UXU^*$ for all $X \in \widehat{\mathcal{A}}^t_G$.
		Let $\widehat{\phi} \colon \mathbb{C}^{V(G)^t \times V(G)^t} \to \mathbb{C}^{V(H)^t \times V(H)^t}$ be the map given by $\widehat{\phi}(X) = UXU^*$.
		Let $\Pi \colon \mathbb{C}^{V(G)^t \times V(G)^t} \to \widehat{\mathcal{A}}^t_G$ be the orthogonal projection onto $\widehat{\mathcal{A}}^t_G$.
		Define $\Phi \colon \mathbb{C}^{V(G)^t \times V(G)^t} \to \mathbb{C}^{V(H)^t \times V(H)^t}$ by $\Phi \coloneqq \widehat{\phi} \circ \Pi$.
		By \cite[Lemma~5.3]{mancinska_graph_2020}, $\widehat{\phi}$ is completely positive and trace-preserving. By \cite[Lemma~5.4]{mancinska_graph_2020}, so is $\Pi$ and hence their composition $\Phi$. Hence, \cref{i1} holds.
		
		Furthermore, $\Pi(J) = J$ and hence $\Phi(J) =J = \Phi^*(J)$. So $\Phi$ satisfies \cref{i3}.

		For \cref{i4}, consider the linear map $\Lambda_\sigma \colon X \mapsto X^\sigma$ for $\sigma \in \mathfrak{S}_{2t}$. Since $\widehat{\mathcal{A}}_G$ is closed under the action of $\mathfrak{S}_{2t}$, it holds that $\Lambda_\sigma \circ \Pi = \Pi \circ \Lambda_\sigma \circ \Pi $. Furthermore, $(\Lambda_{\sigma})^{*} = \Lambda_{\sigma^{-1}}$ and $\Pi$ is self-adjoint, i.e.\  $\Pi^* = \Pi$. Hence,
		\[
		\Pi \circ \Lambda_{\sigma}
		= \Pi^* \circ \Lambda_\sigma
		= (\Lambda_{\sigma^{-1}} \circ \Pi)^*
		= (\Pi \circ \Lambda_{\sigma^{-1}} \circ \Pi)^*
		= \Pi \circ \Lambda_{\sigma} \circ \Pi
		= \Lambda_\sigma \circ \Pi.
		\]
		So $\Pi$ and $\Lambda_\sigma$ commute. Hence,
		\[
			\Phi(X^\sigma) 
			= (\widehat{\phi} \circ \Pi \circ \Lambda_\sigma)(X)
			= (\widehat{\phi} \circ \Lambda_\sigma \circ \Pi )(X)
			= ((\widehat{\phi} \circ \Pi)(X))^\sigma
			= \Phi(X)^\sigma.
		\]
		\Cref{i2} follows similarly, cf.\  the proof of \cite[Theorem~6.2]{mancinska_graph_2020}.
	\end{proof}

	\subsubsection{Coherent Algebras and \texorpdfstring{$\mathcal{DNN}$}{DNN}-Isomorphism Maps}

     \begin{figure}
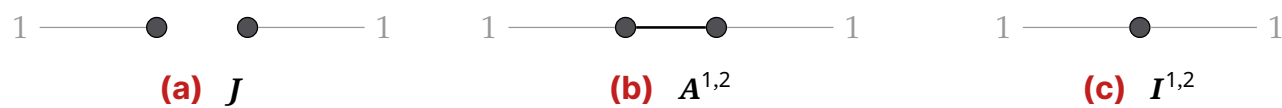

        \centering
        \begin{subfigure}[t]{.3\textwidth}
            \centering
            \includegraphics[page=5,scale=1.2]{figure-basal.pdf}
            \caption{$\boldsymbol{J}$}
        \end{subfigure}
        \begin{subfigure}[t]{.3\textwidth}
            \centering
            \includegraphics[page=6,scale=1.2]{figure-basal.pdf}
            \caption{$\boldsymbol{A}^{1,2}$}
        \end{subfigure}
        \begin{subfigure}[t]{.3\textwidth}
            \centering
            \includegraphics[page=7,scale=1.2]{figure-basal.pdf}
            \caption{$\boldsymbol{I}^{1,2}$}
        \end{subfigure}
        \caption{The three atomic graphs in $\mathcal{A}_1$.}
        \label{fig:atomic-t1}
    \end{figure}

	A matrix algebra $\mathcal{A} \subseteq \mathbb{C}^{n^t\times n^t}$ is \emph{coherent} if it is unital, self-adjoint, contains the all-ones matrix and is closed under Schur products.

    For $t=1$, the $1$-adjacency algebra as defined below is equal to the well-studied \emph{adjacency algebra} of a graph $G$, cf.\  \cite{chen_lectures_2018}.
    The latter is the smallest coherent algebra containing the adjacency matrix of the graph.
    The former is generated by the homomorphism tensors of $(1,1)$-bilabelled atomic graphs.
    These graphs are depicted in \cref{fig:atomic-t1}. Their homomorphism tensors are the all-ones matrix, the adjacency matrix of the graph, and the identity matrix.
	
	\begin{definition} \label{def:ce}
		Let $t \geq 1$.
		The \emph{$t$-adjacency algebra} $\mathcal{A}_G^t$ of a graph $G$ is the self-symmetrical coherent algebra generated by the homomorphism tensors of the atomic graphs $\mathcal{A}_t$.
		
		Two $n$-vertex graphs $G$ and $H$ are \emph{$t$-equivalent} if there is \emph{$t$-equivalence}, i.e.\  a vector space isomorphism $\phi \colon \mathcal{A}^t_G \to \mathcal{A}^t_H$ such that
		\begin{enumerate}
			\item $\phi(M^*) = \phi(M)^*$ for all $M \in \mathcal{A}^t_G$,
			\item $\phi(MN) = \phi(M)\phi(N)$ for all $M, N \in \mathcal{A}^t_G$,
   		  \item $\phi(I) = I$, $\phi(\boldsymbol{A}_G) = \boldsymbol{A}_H$ for all $\boldsymbol{A} \in \mathcal{A}_t$, and $\phi(J) = J$,
			\item $\phi(M \odot N) = \phi(M) \odot \phi(N)$ for all $M, N \in \mathcal{A}^t_G$.
			\item $\phi(M^\sigma) = \phi(M)^\sigma$ for all $M \in \mathcal{A}^t_G$ and all $\sigma \in \mathfrak{S}_{2t}$.
		\end{enumerate}
	\end{definition}
	
	The following \cref{thm:equivalent} extends \cite[Theorem~7.3]{mancinska_graph_2020}.

	\begin{theorem} \label{thm:equivalent}
		Let $t \geq 1$.
		Two graphs $G$ and $H$ are $t$-equivalent
		if and only if
		there is a level-$t$ $\mathcal{DNN}$-isomorphism map from $G$ to $H$.
	\end{theorem}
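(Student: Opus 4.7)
The plan is to adapt the strategy of \cref{thm:partially-equivalent} to the more structured setting of \cref{thm:equivalent}, tracking precisely how the strengthening from complete $\mathcal{S}_+$-preservation to complete $\mathcal{DNN}$-preservation at the level of the map $\Phi$ corresponds to the strengthening from Schur compatibility with atomic tensors to full Schur multiplicativity at the level of the algebra isomorphism $\phi$.

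For the backward implication, I would start with a level-$t$ $\mathcal{DNN}$-isomorphism map $\Phi$ from $G$ to $H$. Since $\mathcal{DNN}\subseteq \mathcal{S}_+$, $\Phi$ is in particular a level-$t$ $\mathcal{S}_+$-isomorphism map, so \cref{thm:partially-equivalent} immediately supplies a partial $t$-equivalence $\widehat\phi \colon \widehat{\mathcal{A}}^t_G \to \widehat{\mathcal{A}}^t_H$ given by restricting $\Phi$. Upgrading this to a $t$-equivalence reduces to two points: (i) $\Phi$ preserves arbitrary Schur products of elements of $\mathcal{A}^t_G$, and (ii) $\Phi(\mathcal{A}^t_G)\subseteq \mathcal{A}^t_H$. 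Point (ii) then follows from (i) because $\mathcal{A}^t_G$ is generated from the atomic homomorphism tensors $\boldsymbol{A}_G$ ($\boldsymbol{A}\in\mathcal{A}_t$) under operations already respected by $\widehat\phi$ together with Schur products. For (i), I would invoke the doubly non-negative factorisation $C_\Phi = \sum_i v_i v_i^T$ with entry-wise non-negative vectors $v_i$ guaranteed by $C_\Phi\in\mathcal{DNN}$. Regrouping this decomposition gives a Kraus-type representation of $\Phi$ by entry-wise non-negative operators, from which Schur multiplicativity of $\Phi$ on $\mathcal{A}^t_G$ can be read off, using the vertex-partition constraints \cref{m1,m2,m3} to ensure the supports of the $v_i$'s are compatible with the block structure of $\mathcal{A}^t_G$.

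For the forward implication, I would start from a $t$-equivalence $\phi\colon \mathcal{A}^t_G \to \mathcal{A}^t_H$ and build a linear extension $\Phi$ of $\phi$ whose Choi matrix lies in $\mathcal{DNN}$. The crucial structural tool is that every (self-symmetrical) coherent algebra possesses a canonical basis of entry-wise non-negative $\{0,1\}$-tensors, namely the characteristic tensors of the underlying coherent configuration refined by the $\mathfrak{S}_{2t}$-action. Since $\phi$ is unital, $*$-preserving, multiplicative, $\sigma$-equivariant, and Schur multiplicative, $\phi$ must bijectively send this canonical basis of $\mathcal{A}^t_G$ to the one of $\mathcal{A}^t_H$. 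These matched bases yield entry-wise non-negative Gram vectors realising $\phi$, which extend to an entry-wise non-negative Gram decomposition of the Choi matrix of the extension $\Phi$ (extending trivially off $\mathcal{A}^t_G$). Once $C_\Phi\in\mathcal{DNN}$ is in place, the remaining conditions \cref{i1,i2,i3,i4} of \cref{thm4.7} translate directly from the five axioms of a $t$-equivalence, so that \cref{thm4.7} identifies $\Phi$ as a level-$t$ $\mathcal{DNN}$-isomorphism map.

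The main obstacle I anticipate is precisely this correspondence between Schur multiplicativity on the entire algebra $\mathcal{A}^t_G$ and entry-wise non-negativity of the Choi matrix. For $t=1$ it essentially reduces to classical coherent configuration theory, but for $t>1$ one has to simultaneously respect the $\mathfrak{S}_{2t}$-action: the canonical non-negative basis of $\mathcal{A}^t_G$ must be constructed so as to be $\mathfrak{S}_{2t}$-closed, and Schur multiplicativity must be leveraged together with the $\sigma$-equivariance of $\phi$ to transport this basis to $\mathcal{A}^t_H$ consistently. Overcoming this amounts to a careful bookkeeping generalisation of the $t=1$ argument of \cite{mancinska_graph_2020}, using the algebraic machinery developed in \cref{sec:choi} to handle the symmetry group action.
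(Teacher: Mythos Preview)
Your overall strategy mirrors the paper's: both directions reduce to the correspondence between Schur multiplicativity on the coherent algebra and entry-wise non-negativity of the Choi matrix, with the canonical $\{0,1\}$-basis of a coherent algebra as the key structural input in the forward direction. The paper proceeds exactly this way, citing \cite[Lemma~6.2 and Theorem~6.3]{mancinska_graph_2020} for the two respective upgrades beyond \cref{thm:partially-equivalent}.

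However, your backward-direction mechanism contains a genuine error. You assert that $C_\Phi\in\mathcal{DNN}$ yields a factorisation $C_\Phi=\sum_i v_iv_i^T$ with entry-wise non-negative vectors $v_i$. That is precisely the definition of a \emph{completely positive} matrix, and it is classical that $\mathcal{CP}\subsetneq\mathcal{DNN}$ once the order is at least $5$; so no such factorisation, and hence no Kraus representation of $\Phi$ with entry-wise non-negative operators, is available from $C_\Phi\in\mathcal{DNN}$ alone. What membership in $\mathcal{DNN}$ actually buys you is that $\Phi$ and $\Phi^*$ are completely positive \emph{and} send entry-wise non-negative matrices to entry-wise non-negative matrices. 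The argument of \cite[Lemma~6.2]{mancinska_graph_2020} combines this with the algebra-multiplicativity already obtained via \cref{thm:partially-equivalent} to deduce Schur multiplicativity on $\mathcal{A}^t_G$; your proposed shortcut through non-negative Gram vectors of $C_\Phi$ does not go through.

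A smaller point on the forward direction: expressing $C_\Phi$ via the matched $\{0,1\}$-bases makes entry-wise non-negativity transparent, but not positive semidefiniteness. The paper handles the latter separately, realising $\phi$ by a unitary $U$ through \cref{lem:mg-A.1} and composing $X\mapsto UXU^*$ with the orthogonal projection $\Pi$ onto $\mathcal{A}^t_G$; both factors are completely positive, hence so is $\Phi=\widehat{\phi}\circ\Pi$, and the non-negativity of $C_\Phi$ then follows from the $\{0,1\}$-basis description of $\Pi$. Your phrase ``extending trivially off $\mathcal{A}^t_G$'' is the projection $\Pi$, but you still need an argument for $C_\Phi\succeq 0$.
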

    \begin{proof}Let $\Phi \colon \mathbb{C}^{V(G)^t \times V(G)^t} \to \mathbb{C}^{V(H)^t \times V(H)^t}$ be a level-$t$ $\mathcal{DNN}$-isomorphism map.
            Let $\phi$ be the restriction of $\Phi$ to $\mathcal{A}^t_G$.
    		Given the arguments in the proof of \cref{thm:partially-equivalent}, it suffices to show that $\phi(M \odot N) = \phi(M) \odot \phi(N)$  for all $M, N \in \mathcal{A}^t_G$ and that $\phi^*(M \odot N) = \phi^*(M) \odot \phi^*(N)$  for all $M, N \in \mathcal{A}^t_H$. This follows from \cite[Lemma~7.2]{mancinska_graph_2020}.
    		
    		Conversely, suppose that $\phi \colon \mathcal{A}^t_G \to \mathcal{A}^t_H$ is as in \cref{def:ce}.
            It follows as in \cite[Lemma~5.3]{mancinska_graph_2020} that $\phi$ is trace-preserving.
    		By \cref{lem:mg-A.1}, there exists a unitary matrix $U \in \mathbb{C}^{n^t \times n^t}$ such that $\phi(X) = UXU^*$ for all $X \in \mathcal{A}^t_G$.
    		Let $\widehat{\phi} \colon \mathbb{C}^{V(G)^t \times V(G)^t} \to \mathbb{C}^{V(H)^t \times V(H)^t}$ be the map given by $\widehat{\phi}(X) = UXU^*$.
    		Let $\Pi \colon \mathbb{C}^{V(G)^t \times V(G)^t} \to \mathcal{A}^t_G$ be the orthogonal projection onto $\mathcal{A}^t_G$.
    		Define $\Phi \colon \mathbb{C}^{V(G)^t \times V(G)^t} \to \mathbb{C}^{V(H)^t \times V(H)^t}$ by $\Phi \coloneqq \widehat{\phi} \circ \Pi$.
    		Given \cref{thm:partially-equivalent}, it suffices to argue that the Choi matrix of $\Phi$ is entry-wise non-negative. This can be done as in the proof of \cite[Theorem~7.3]{mancinska_graph_2020}.
    	\end{proof}

	\section{Homomorphism Indistinguishability}
	\label{sec:hi}
	
	Using techniques from \cite{grohe_homomorphism_2022}, we finally establish a characterisation of when the level-$t$ Lasserre relaxation of $\ISO(G, H)$ is feasible in terms of homomorphism indistinguishability of $G$ and $H$. In order to do so, we introduce the graph classes $\mathcal{L}_t$ and $\mathcal{L}_{t}^+$. 
	In \cref{sec:lt-ltplus,sec:further-relations}, we relate $\mathcal{L}_t$ and $\mathcal{L}_t^+$ to the classes of graphs of bounded treewidth and pathwidth obtaining the results depicted in \cref{fig:relationship}. In \cref{sec:t-equals-one}, $\mathcal{L}_1$ and $\mathcal{L}_1^+$ are identified as the classes of outerplanar graphs and graphs of treewidth two, respectively.

	\begin{definition} \label{def:l-lplus}
		Let $t \geq 1$.
		Write $\mathcal{L}_t^+$ for the class of $(t,t)$-bilabelled graphs generated by the set of atomic graphs $\mathcal{A}_t$ under parallel composition, series composition, and the action of $\mathfrak{S}_{2t}$ on the labels. 
        
        Write $\mathcal{L}_t \subseteq \mathcal{L}_t^+$ for the class of $(t,t)$-bilabelled graphs generated by the set of atomic graphs $\mathcal{A}_t$ under parallel composition with graphs from $\mathcal{A}_t$, series composition, and the action of $\mathfrak{S}_{2t}$ on the labels.
	\end{definition}

    Note that the only difference between $\mathcal{L}_t$ and $\mathcal{L}_t^+$ is that $\mathcal{L}_t$ is closed under parallel composition with atomic graphs only. This reflects an observation by \cite{grohe_homomorphism_2022} relating the closure under arbitrary gluing products to non-negative solutions to systems of equations characterising homomorphism indistinguishability. Intuitively, one may use arbitrary Schur products, the algebraic counterparts of gluing, for a Vandermonde interpolation argument, cf.\  \cite[Corollary~4.3]{grohe_homomorphism_2021_arxiv}.

    The following \cref{obs:lt-clique} illustrates how the operations in \cref{def:l-lplus} can be used to generate more complicated graphs from the atomic graphs, cf.\  \cref{fig:obs:lt-clique}.

    \begin{figure}
        \centering
        \includegraphics[page=10, scale=1.2]{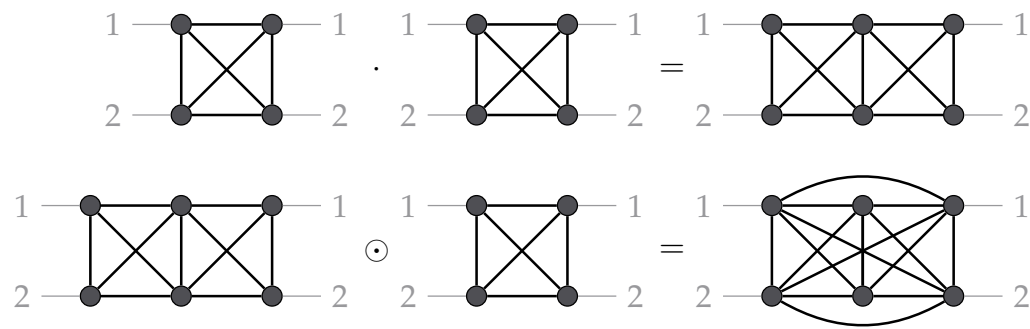}
        \caption{The bilabelled graphs in \cref{obs:lt-clique} for $t = 2$.}
        \label{fig:obs:lt-clique}
    \end{figure}
    \begin{observation} \label{obs:lt-clique}
		Let $t \geq 1$. The class $\mathcal{L}_t$ contains a bilabelled graph whose underlying unlabelled graph is isomorphic to the $3t$-clique $K_{3t}$.
	\end{observation}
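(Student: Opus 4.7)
The plan is to build such a bilabelled graph by a short sequence of operations allowed in the generation of $\mathcal{L}_t$: first form a $(t,t)$-bilabelled atomic clique on $2t$ vertices via parallel composition of atomic generators, then double its vertex set to $3t$ by a single series composition, and finally restore the missing edges by parallel composition with atomic edge graphs.

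First I would define $\boldsymbol{K} \coloneqq \boldsymbol{J} \odot \bigodot_{1 \leq i < j \leq 2t} \boldsymbol{A}^{ij}$. Since $\boldsymbol{J}$ and each $\boldsymbol{A}^{ij}$ lie in $\mathcal{A}_t$, and $\mathcal{A}_t$ is closed under parallel composition (as noted in \cref{obs:atomic}), $\boldsymbol{K}$ is atomic. By inspection, $V(\soe \boldsymbol{K}) = [2t]$ with every pair adjacent, so $\soe \boldsymbol{K} \cong K_{2t}$; its in-labels are $(1,\dots,t)$ and its out-labels are $(t+1,\dots,2t)$.

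Next I would consider the series composition $\boldsymbol{K} \cdot \boldsymbol{K}$, which is in $\mathcal{L}_t$ since series composition is one of the permitted operations. Its underlying graph has $3t$ vertices, namely the in-labels $L$ of the first copy, the $t$ middle vertices $M$ obtained by identifying the out-labels of the first copy with the in-labels of the second, and the out-labels $R$ of the second copy. The edges inherited from the first copy of $\boldsymbol{K}$ make $L \cup M$ a clique, and those from the second copy make $M \cup R$ a clique; the only missing edges are those between $L$ and $R$. The result is a $(t,t)$-bilabelled graph with in-labels $L$ and out-labels $R$, with $M$ unlabelled.

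Finally I would parallel compose with the atomic edges that cross from in-labels to out-labels, namely
\[
\boldsymbol{C} \coloneqq (\boldsymbol{K} \cdot \boldsymbol{K}) \odot \bigodot_{1 \leq i \leq t < j \leq 2t} \boldsymbol{A}^{ij}.
\]
Each such $\boldsymbol{A}^{ij}$ is atomic, so this is a permitted operation for $\mathcal{L}_t$. The effect of $\odot \boldsymbol{A}^{ij}$ for $1 \leq i \leq t < j \leq 2t$ is exactly to add the edge between the $i$-th vertex of $L$ and the $(j-t)$-th vertex of $R$, so after all such compositions every pair between $L$ and $R$ is adjacent. Hence $\soe \boldsymbol{C} \cong K_{3t}$ and $\boldsymbol{C} \in \mathcal{L}_t$, as claimed. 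The construction is essentially routine once the right decomposition of $K_{3t}$ into two overlapping $2t$-cliques is identified; the only thing to keep in mind is that the middle block $M$ must be filled with edges before the series composition (while it is still labelled), since after series composition $M$ becomes unlabelled and no further atomic parallel composition can touch it.
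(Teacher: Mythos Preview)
Your proof is correct and follows essentially the same approach as the paper. The paper defines $\boldsymbol{E} \coloneqq \bigodot_{1 \leq i < j \leq 2t} \boldsymbol{A}^{ij}$ and takes $\boldsymbol{E} \odot (\boldsymbol{E} \cdot \boldsymbol{E})$; your construction is the same up to the harmless inclusion of $\boldsymbol{J}$ in the definition of $\boldsymbol{K}$ and the fact that you glue on only the missing $L$--$R$ edges in the last step rather than the full atomic clique (which merely re-adds some already present edges).
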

	\begin{proof}
		Let $\boldsymbol{E} \coloneqq \bigodot_{1 \leq i < j \leq 2t} \boldsymbol{A}^{ij} \in \mathcal{A}_t$. The graph underlying $\boldsymbol{E} \odot (\boldsymbol{E} \cdot \boldsymbol{E})$ is isomorphic to $K_{3t}$.
	\end{proof}

    The only missing implications of \cref{thm:summary,thm:summary-pos} follow from the next two theorems:

    \begin{theorem} \label{thm:hi-l}
        Let $t \geq 1$. Two graphs $G$ and $H$ are homomorphism indistinguishable over~$\mathcal{L}_t$ if and only if they are partially $t$-equivalent.
    \end{theorem}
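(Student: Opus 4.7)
The plan is to identify the algebra $\widehat{\mathcal{A}}_G^t$ with $\operatorname{span}\{\boldsymbol{F}_G \mid \boldsymbol{F} \in \mathcal{L}_t\}$. Every operation generating $\mathcal{L}_t$---series composition, parallel composition with an atomic graph, and the $\mathfrak{S}_{2t}$-action---is mirrored on homomorphism tensors by matrix product, Schur product with an atomic tensor, and the $\mathfrak{S}_{2t}$-action, respectively. Setting $\boldsymbol{E} \coloneqq \boldsymbol{I}^{1,t+1} \odot \cdots \odot \boldsymbol{I}^{t,2t} \in \mathcal{A}_t$, one has $I = \boldsymbol{E}_G$ and $J = \boldsymbol{J}_G$, and the adjoint $M^*$ is the image of $M$ under a specific $\sigma \in \mathfrak{S}_{2t}$. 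Hence the span is a self-symmetrical, self-adjoint, $S$-partially coherent unital algebra containing $J$, and by minimality equals $\widehat{\mathcal{A}}_G^t$.

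For $(\Leftarrow)$, given a partial $t$-equivalence $\varphi$, a structural induction on $\boldsymbol{F} \in \mathcal{L}_t$ using axioms (2)--(5) of \cref{def:pce} yields $\varphi(\boldsymbol{F}_G) = \boldsymbol{F}_H$. To conclude, I will show $\varphi$ is sum-preserving. From the matrix identity $JMJ = (\soe M)\, J$, multiplicativity, and $\varphi(J) = J$, one obtains $(\soe M)\, J = \varphi(JMJ) = J\varphi(M)J = (\soe \varphi(M))\, J$, whence $\soe \varphi(M) = \soe M$. Taking $M = \boldsymbol{F}_G$ delivers $\hom(\soe \boldsymbol{F}, G) = \hom(\soe \boldsymbol{F}, H)$.

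For $(\Rightarrow)$, define $\varphi(\boldsymbol{F}_G) \coloneqq \boldsymbol{F}_H$ on the spanning set and extend by linearity. The main task is well-definedness. I will show that the Frobenius inner product is preserved on the generators. Using $\tr N = \soe(N \odot I)$ and $I = \boldsymbol{E}_G$,
\[
    \langle \boldsymbol{F}_G, \boldsymbol{F}'_G \rangle = \tr\bigl((\boldsymbol{F}^* \cdot \boldsymbol{F}')_G\bigr) = \hom\bigl(\soe\bigl((\boldsymbol{F}^* \cdot \boldsymbol{F}') \odot \boldsymbol{E}\bigr),\, G\bigr).
\]
Since $\mathcal{L}_t$ is closed under adjoint, series composition, and parallel composition with atomic graphs, $(\boldsymbol{F}^* \cdot \boldsymbol{F}') \odot \boldsymbol{E}$ lies in $\mathcal{L}_t$, and the hypothesis $G \equiv_{\mathcal{L}_t} H$ yields equality with $\langle \boldsymbol{F}_H, \boldsymbol{F}'_H \rangle$. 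Consequently, if $\sum_i c_i (\boldsymbol{F}_i)_G = 0$, then $N \coloneqq \sum_i c_i (\boldsymbol{F}_i)_H$ is orthogonal to its own span $\widehat{\mathcal{A}}_H^t$, so $\langle N, N \rangle = 0$ forces $N = 0$ by positive definiteness of the Frobenius product on real matrices. The axioms (1)--(5) of \cref{def:pce} are then routine to verify on generators from the closure properties of $\mathcal{L}_t$.

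The main obstacle is the well-definedness for $(\Rightarrow)$: the hypothesis controls only sums of entries of homomorphism tensors, whereas the non-degenerate pairing available on $\widehat{\mathcal{A}}_G^t$ is the trace-based Frobenius product. Bridging this gap hinges on writing $\tr N = \soe(N \odot I)$ with $I = \boldsymbol{E}_G$ for an atomic $\boldsymbol{E}$, which is precisely why $\mathcal{L}_t$ must be closed under parallel composition with every atomic graph, rather than only with $\boldsymbol{J}$.
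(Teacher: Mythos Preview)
Your proof is correct and follows essentially the same approach as the paper: identify $\widehat{\mathcal{A}}^t_G$ with the span of homomorphism tensors over $\mathcal{L}_t$, and use that $\mathcal{L}_t$ is inner-product compatible (your computation $\langle \boldsymbol{F}_G,\boldsymbol{F}'_G\rangle=\hom(\soe((\boldsymbol{F}^*\cdot\boldsymbol{F}')\odot\boldsymbol{E}),G)$ is exactly \cref{lem:ipc}) to get well-definedness of the map $\boldsymbol{F}_G\mapsto\boldsymbol{F}_H$, which the paper packages abstractly as \cref{thm:ipc}. One pleasant difference: for the converse direction the paper derives sum-preservation by invoking an external lemma to show $\phi$ is trace-preserving and then combining with $\phi(J)=J$, whereas your argument via $JMJ=(\soe M)\,J$ and multiplicativity is more direct and avoids that detour entirely.
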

    \begin{theorem} \label{thm:hi-l-pos}
        Let $t \geq 1$. Two graphs $G$ and $H$ are homomorphism indistinguishable over~$\mathcal{L}_t^+$ if and only if they are $t$-equivalent.
    \end{theorem}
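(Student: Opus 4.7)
The plan is to identify the $t$-adjacency algebra $\mathcal{A}^t_G$ with the linear span of the homomorphism tensors $\{\boldsymbol{F}_G : \boldsymbol{F} \in \mathcal{L}_t^+\}$ and then to transport this identification between $G$ and $H$ via the dictionary, recalled in \cref{sec:bilabelled}, between combinatorial operations on bilabelled graphs and algebraic operations on their homomorphism tensors. Since $\mathcal{L}_t^+$ is closed under parallel composition, series composition, and the $\mathfrak{S}_{2t}$-action, the set $\{\boldsymbol{F}_G : \boldsymbol{F} \in \mathcal{L}_t^+\}$ is closed under Schur products, matrix products, and the $\sigma$-action, and contains all atomic tensors (in particular the identity via $\bigodot_i \boldsymbol{I}^{i,t+i}$, the all-ones matrix via $\boldsymbol{J}$, and all adjoints via the swap permutation). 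Its linear span is therefore a self-symmetrical coherent algebra containing all atomic tensors, so by minimality it equals $\mathcal{A}^t_G$; the analogous statement holds for $H$.

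For the forward direction, I would run the standard Gram-matrix argument. For any $\boldsymbol{F}, \boldsymbol{F}' \in \mathcal{L}_t^+$, the bilabelled graph $\boldsymbol{F} \cdot \boldsymbol{F}'^*$ lies in $\mathcal{L}_t^+$, and parallel composition with the atomic graph $\boldsymbol{D} \coloneqq \bigodot_i \boldsymbol{I}^{i,t+i}$ produces an element of $\mathcal{L}_t^+$ whose underlying unlabelled graph has the same homomorphism count into any target as $\tr(\boldsymbol{F} \cdot \boldsymbol{F}'^*)$, because $\boldsymbol{D}_G = I$ and so $\soe(\boldsymbol{F}_G \odot I) = \tr \boldsymbol{F}_G$. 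Invoking $G \equiv_{\mathcal{L}_t^+} H$ therefore yields $\langle \boldsymbol{F}_G, \boldsymbol{F}'_G \rangle = \tr(\boldsymbol{F}_G (\boldsymbol{F}'_G)^*) = \tr(\boldsymbol{F}_H (\boldsymbol{F}'_H)^*) = \langle \boldsymbol{F}_H, \boldsymbol{F}'_H \rangle$, so the Gram matrices of the two spanning families agree. A standard kernel calculation then shows that $\boldsymbol{F}_G \mapsto \boldsymbol{F}_H$ extends to a well-defined linear isomorphism $\phi : \mathcal{A}^t_G \to \mathcal{A}^t_H$, and the five properties of a $t$-equivalence follow mechanically from the closure of $\mathcal{L}_t^+$ under the corresponding operations.

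For the reverse direction, a structural induction on the generation of $\mathcal{L}_t^+$ yields $\phi(\boldsymbol{F}_G) = \boldsymbol{F}_H$ for every $\boldsymbol{F} \in \mathcal{L}_t^+$: the base case $\boldsymbol{F} \in \mathcal{A}_t$ is condition (3) of $t$-equivalence, and the inductive steps for series composition, parallel composition, and the $\sigma$-action use conditions (2), (4), and (5) respectively. It remains to deduce $\soe(\boldsymbol{F}_G) = \soe(\boldsymbol{F}_H)$, which is not immediate since $\phi$ is a priori not sum-preserving. The key identity is $J \cdot M \cdot J = \soe(M) \cdot J$, valid for every matrix $M$: applying $\phi$, and using $\phi(J) = J$ together with multiplicativity, gives $\soe(\boldsymbol{F}_G) \cdot J = \phi(J \cdot \boldsymbol{F}_G \cdot J) = J \cdot \boldsymbol{F}_H \cdot J = \soe(\boldsymbol{F}_H) \cdot J$, whence $\hom(\soe\boldsymbol{F}, G) = \soe \boldsymbol{F}_G = \soe \boldsymbol{F}_H = \hom(\soe\boldsymbol{F}, H)$, as required.

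The main obstacle I anticipate is the well-definedness of the assignment $\boldsymbol{F}_G \mapsto \boldsymbol{F}_H$ in the forward direction, and more specifically the need to realise the Hilbert--Schmidt inner product $\langle \boldsymbol{F}_G, \boldsymbol{F}'_G \rangle = \hom(\tr(\boldsymbol{F} \cdot \boldsymbol{F}'^*), G)$ as $\hom(\soe\boldsymbol{G}, G)$ for some $\boldsymbol{G} \in \mathcal{L}_t^+$, so that the hypothesis $\equiv_{\mathcal{L}_t^+}$ can legitimately be invoked (recall that the definition of $\equiv_{\mathcal{S}}$ refers to underlying unlabelled graphs, not to traced ones). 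Parallel composition with $\boldsymbol{D}$ supplies exactly this bridge, trading the trace operation on tensors for the sum-of-entries operation on the appropriate labelled graph.
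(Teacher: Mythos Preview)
Your proposal is correct and follows essentially the same route as the paper: identify $\mathcal{A}^t_G$ with the span of $\{\boldsymbol{F}_G : \boldsymbol{F} \in \mathcal{L}_t^+\}$, use inner-product compatibility (your parallel composition with $\boldsymbol{D} = \bigodot_i \boldsymbol{I}^{i,t+i}$ is exactly the paper's \cref{lem:ipc}) together with a Gram-matrix argument (\cref{thm:ipc}) to build $\phi$, and then verify the $t$-equivalence axioms from the closure properties of $\mathcal{L}_t^+$. Your reverse-direction argument for sum-preservation via the identity $J\cdot M\cdot J = \soe(M)\,J$ is slightly more direct than the paper's, which instead invokes trace-preservation of $\phi$ (via \cite[Lemma~A.3]{mancinska_graph_2020}) together with $\phi(J)=J$; both reach the same conclusion.
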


    For the proofs of \cref{thm:hi-l,thm:hi-l-pos}, we extend the framework developed by \cite{grohe_homomorphism_2022}.
    In this work, the authors introduced tools for constructing systems of equations characterising homomorphism indistinguishably over classes of labelled graphs.
    A requirement of these tools is that the graph class in question is \emph{inner-product compatible} \cite[Definition 24]{grohe_homomorphism_2022}.
    This means that for every two labelled graphs $\boldsymbol{R}$ and $\boldsymbol{S}$ one can write the inner-product of their homomorphism vectors $\boldsymbol{R}_G$ and $\boldsymbol{S}_G$ as the sum-of-entries of some $\boldsymbol{T}_G$ where $\boldsymbol{T}$ is labelled graph from the class. 
    Due to the correspondence between combinatorial operations on labelled graphs and algebraic operations on their homomorphism vectors, cf.\  \cref{sec:bilabelled}, this is equivalent to the graph theoretic assumption that $\soe(\boldsymbol{R} \odot \boldsymbol{S}) = \soe(\boldsymbol{T})$, i.e.\  the unlabelled graph obtained by unlabelling the gluing product of $\boldsymbol{R}$ and $\boldsymbol{S}$ can be labelled such that the resulting labelled graph is in the class.

    We extend this notion to bilabelled graphs.
	A class of $(t,t)$-bilabelled graphs $\mathcal{S}$ is said to be \emph{inner-product compatible} if for all $\boldsymbol{R}, \boldsymbol{S} \in \mathcal{S}$ there is a graph $\boldsymbol{T} \in \mathcal{S}$ such that $\tr(\boldsymbol{R} \cdot \boldsymbol{S}^*) = \soe(\boldsymbol{T})$.
	This definition is inspired by the inner-product on $\mathbb{C}^{n\times n}$ given by $\left< A, B \right> \coloneqq \tr(AB^*)$.

	\begin{lemma} \label{lem:ipc}
		Let $t \geq 1$. The classes $\mathcal{L}_t$ and $\mathcal{L}_t^+$ are inner-product compatible.
	\end{lemma}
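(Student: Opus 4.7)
The plan is to exhibit, for any $\boldsymbol{R}, \boldsymbol{S}$ in the class, an explicit bilabelled graph $\boldsymbol{T}$ in the same class whose underlying unlabelled graph equals $\tr(\boldsymbol{R}\cdot\boldsymbol{S}^*)$. The crucial ingredient is the atomic ``identity'' graph $\boldsymbol{D}_t := \bigodot_{i=1}^t \boldsymbol{I}^{i,t+i}$, which lies in $\mathcal{A}_t$ because $\mathcal{A}_t$ is generated under parallel composition by the $\boldsymbol{J}$, $\boldsymbol{A}^{ij}$ and $\boldsymbol{I}^{ij}$. Inside $\boldsymbol{I}^{i,t+i}$ the vertex at in-label position $i$ coincides with the vertex at out-label position $i$, since $\boldsymbol{I}^{i,t+i}$ arises from $\boldsymbol{A}^{i,t+i}$ by contracting the edge between the two label positions. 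Parallel composition with $\boldsymbol{D}_t$ therefore identifies, for every $i$, the $i$-th in-labelled vertex with the $i$-th out-labelled vertex of whatever $(t,t)$-bilabelled graph it is combined with -- precisely the combinatorial operation performed by $\tr$, except that $\tr$ additionally forgets the labels.

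With this in hand I would set $\boldsymbol{T} := (\boldsymbol{R}\cdot\boldsymbol{S}^*)\odot \boldsymbol{D}_t$ and check the two required properties. For \emph{membership}, note that $\boldsymbol{S}^* = \boldsymbol{S}^\sigma$ for the permutation $\sigma\in\mathfrak{S}_{2t}$ swapping the first and second halves, so closure under the $\mathfrak{S}_{2t}$-action places $\boldsymbol{S}^*$ in the class; closure under series composition then yields $\boldsymbol{R}\cdot \boldsymbol{S}^*$ in the class; and since $\boldsymbol{D}_t \in \mathcal{A}_t$, the final parallel composition is permitted in $\mathcal{L}_t$ (which admits parallel composition with atomic graphs only) as well as, \emph{a fortiori}, in $\mathcal{L}_t^+$. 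For \emph{correctness}, the paragraph above shows that the unlabelled graph underlying $\boldsymbol{T}$ is exactly the graph obtained from $\boldsymbol{R}\cdot \boldsymbol{S}^*$ by merging $\boldsymbol{u}_i$ with $\boldsymbol{v}_i$ for each $i$, i.e.\@ $\tr(\boldsymbol{R}\cdot\boldsymbol{S}^*)$.

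If one prefers an algebraic check that sidesteps reasoning directly about unlabelled graphs, compositionality (end of \cref{sec:bilabelled}) gives $\boldsymbol{T}_G = (\boldsymbol{R}_G \boldsymbol{S}_G^*)\odot \bigodot_i (\boldsymbol{I}^{i,t+i})_G$ for every host graph $G$; a direct inspection shows $(\boldsymbol{I}^{i,t+i})_G(\boldsymbol{x},\boldsymbol{y}) = [\boldsymbol{x}_i = \boldsymbol{y}_i]$, so the Schur factors collectively force $\boldsymbol{x} = \boldsymbol{y}$. Summing over all $(\boldsymbol{x},\boldsymbol{y})$ therefore collapses to $\tr(\boldsymbol{R}_G \boldsymbol{S}_G^*)$, i.e.\@ $\soe(\boldsymbol{T}_G) = \tr((\boldsymbol{R}\cdot \boldsymbol{S}^*)_G)$ for all $G$, which is the required identity at the level of homomorphism data. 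I do not foresee a real obstacle: once the role of $\boldsymbol{D}_t$ is recognised, the proof is a direct chase through the closure rules of \cref{def:l-lplus}. The only step demanding care is the $\mathcal{L}_t$ case, where one must verify that $\boldsymbol{D}_t$ is genuinely \emph{atomic} and not merely an element of $\mathcal{L}_t$; this atomicity is what makes the construction work for the smaller class $\mathcal{L}_t$ and not only for $\mathcal{L}_t^+$.
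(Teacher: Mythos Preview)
Your proposal is correct and is essentially the paper's own proof: the paper likewise observes that $\tr(\boldsymbol{F}) = \soe(\boldsymbol{I}^{1,t+1}\odot\dots\odot\boldsymbol{I}^{t,2t}\odot\boldsymbol{F})$ (your $\boldsymbol{D}_t$) and uses closure of $\mathcal{L}_t$ under series composition, the $\mathfrak{S}_{2t}$-action, and parallel composition with atomic graphs. The only cosmetic difference is that the paper first reduces to showing $\tr(\boldsymbol{S})$ underlies some element of the class for arbitrary $\boldsymbol{S}$, whereas you apply the construction directly to $\boldsymbol{R}\cdot\boldsymbol{S}^*$.
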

	\begin{proof}
		Since  $\mathcal{L}_t$ is closed under matrix products and taking transposes, it suffices to show that for every $\boldsymbol{S} \in \mathcal{L}_t$ the graph $\tr(\boldsymbol{S})$ is the underlying unlabelled graph of some element of $\mathcal{L}_t$. Indeed, for every $(t,t)$-bilabelled graphs $\boldsymbol{F}$ it holds that 
		$\tr(\boldsymbol{F}) = \soe(\boldsymbol{I}^{1, t+1} \odot \dots \odot \boldsymbol{I}^{t, 2t} \odot \boldsymbol{F})$ where the $\boldsymbol{I}^{ij}$ are as in \cref{obs:atomic}. Since $\mathcal{L}_t$ is closed under parallel composition with atomic graphs, the claim follows. For $\mathcal{L}_t^+$, an analogous argument yields the claim.
	\end{proof}

    The following \cref{thm:ipc}, which extends the toolkit for constructing systems of equations characterising homomorphism indistinguishability over families of bilabelled graphs, is the bilabelled analogue of \cite[Theorem 13]{grohe_homomorphism_2022}.
    Write $\mathbb{C}\mathcal{S}_G \subseteq \mathbb{C}^{V(G)^t \times V(G)^t}$ for the vector space spanned by homomorphism tensors $\boldsymbol{S}_G$ for $\boldsymbol{S} \in \mathcal{S}$.

	\begin{theorem} \label{thm:ipc}
		Let $t \geq 1$ and $\mathcal{S}$ be an inner-product compatible class of $(t,t)$-bilabelled graphs containing~$\boldsymbol{J}$.
		For graphs $G$ and $H$, the following are equivalent:
		\begin{enumerate}
			\item $G$ and $H$ are homomorphism indistinguishable over $\mathcal{S}$,
			\item there exists a sum-preserving vector space isomorphism $\phi \colon \mathbb{C}\mathcal{S}_G \to \mathbb{C}\mathcal{S}_H$ such that $\phi(\boldsymbol{S}_G) = \boldsymbol{S}_H$ for all $\boldsymbol{S} \in \mathcal{S}$.
		\end{enumerate}
	\end{theorem}

    \begin{proof}For the forward direction, observe that for all $\boldsymbol{R}, \boldsymbol{S} \in \mathcal{S}$ it holds that $\left< \boldsymbol{R}_G, \boldsymbol{S}_G \right> = \tr(\boldsymbol{R}_G\boldsymbol{S}_G^*) = \left< \boldsymbol{R}_H, \boldsymbol{S}_H \right>$ by inner-product compatibility. Hence, by a Gram--Schmidt argument~\cite[Lemma~2.1]{grohe_homomorphism_2021_arxiv}, there exists a unitary map such that  $\phi(\boldsymbol{S}_G) = \boldsymbol{S}_H$ for all $\boldsymbol{S} \in \mathcal{S}$.
		Since $\phi(\boldsymbol{J}_G) = \boldsymbol{J}_H$ and $\phi$ is unitary, it is sum-preserving by \cref{rem:num-vertices}.
		Conversely, let $\phi$ be as stipulated. For every $\boldsymbol{S} \in \mathcal{S}$, it holds that $\soe(\boldsymbol{S}_H) = \soe(\phi(\boldsymbol{S}_G)) = \soe(\boldsymbol{S}_G)$ since $\phi$ is sum-preserving.
	\end{proof}

    This completes the preparations for the proof of \cref{thm:hi-l,thm:hi-l-pos}.

	\begin{proof}[Proof of Theorems \ref{thm:hi-l} and \ref{thm:hi-l-pos}]
        By comparing the operations from \cref{def:pce,def:l-lplus}, it follows that $\mathbb{C}\mathcal{S}_G = \widehat{\mathcal{A}}^t_G$ for $\mathcal{S} = \mathcal{L}_t$.
        By \cref{lem:ipc,thm:ipc}, $G$ and $H$ are homomorphism indistinguishable over $\mathcal{L}_t$ if and only if there is a sum-preserving vector space isomorphism $\phi \colon \widehat{\mathcal{A}}^t_G \to \widehat{\mathcal{A}}^t_H$ satisfying $\phi(\boldsymbol{S}_G) = \boldsymbol{S}_H$ for all $\boldsymbol{S} \in \mathcal{L}_t$.
		
		For all atomic $\boldsymbol{A} \in \mathcal{A}_t$, it holds that $\phi(\boldsymbol{A}_G) = \boldsymbol{A}_H$. Furthermore, since $\mathcal{L}_t$ is closed under the action of $\mathfrak{S}_{2t}$, $\phi(\boldsymbol{S}_G^\sigma) = \phi((\boldsymbol{S}^\sigma)_G) = (\boldsymbol{S}^\sigma)_H = \boldsymbol{S}_H^\sigma$ for all $\sigma \in \mathfrak{S}_{2t}$.
		Finally, for all $\boldsymbol{S}, \boldsymbol{T} \in \mathcal{L}_t$ it holds that $\phi(\boldsymbol{S}_G \cdot \boldsymbol{T}_G) = \phi((\boldsymbol{S} \cdot \boldsymbol{T})_G) = \boldsymbol{S}_H \cdot \boldsymbol{T}_H$ and $\phi(\boldsymbol{S}_G \odot \boldsymbol{T}_G) = \phi((\boldsymbol{S} \odot \boldsymbol{T})_G) = \boldsymbol{S}_H \odot \boldsymbol{T}_H$. 
        The homomorphism matrices $\boldsymbol{S}_G$ for $\boldsymbol{S} \in \mathcal{L}_t$ span  $\mathbb{C}\mathcal{S}_G = \widehat{\mathcal{A}}^t_G$. Hence, $\phi$ is a partial $t$-equivalence.
		
		Conversely, every partial $t$-equivalence $\phi \colon \widehat{\mathcal{A}}^t_G \to \widehat{\mathcal{A}}^t_H$ is such that $\phi(\boldsymbol{S}_G) = \boldsymbol{S}_H$ for all $\boldsymbol{S} \in \mathcal{L}_t$ by definition of $\mathcal{L}_t$.
		With slight modifications, \cite[Lemma~5.3]{mancinska_graph_2020} yields that $\phi$ is trace-preserving, which implies with $\phi(J) = J$ that $\phi$ is sum-preserving.
		The proof of \cref{thm:hi-l-pos} is analogous.
	\end{proof}

	\subsection{The Classes \texorpdfstring{$\mathcal{L}_t$}{Lt} and \texorpdfstring{$\mathcal{L}_t^+$}{Lt+} and Graphs of Bounded Treewidth}
	\label{sec:lt-ltplus}

	In this section, the classes $\mathcal{L}_t$ and $\mathcal{L}_t^+$ are compared to the classes of graphs of bounded treewidth and pathwidth. \Cref{fig:relationship} depicts the relationships between these classes. 
    The first result, \cref{lem:tw3t}, gives an upper bound on the treewidth of graphs in $\mathcal{L}_t^+$.

	\begin{lemma} \label{lem:tw3t}
		Let $t \geq 1$.
		The treewidth of an unlabelled graph $F$ underlying some  $\boldsymbol{F} = (F, \boldsymbol{u}, \boldsymbol{v}) \in \mathcal{L}_t^+$ is at most $3t - 1$.
	\end{lemma}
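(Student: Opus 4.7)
The plan is to prove a stronger statement by induction on the construction of $\boldsymbol{F}$ as an element of $\mathcal{L}_t^+$ via the generating operations in \cref{def:l-lplus}. The strengthened inductive hypothesis I would use is: every $\boldsymbol{F} = (F, \boldsymbol{u}, \boldsymbol{v}) \in \mathcal{L}_t^+$ admits a tree decomposition of width at most $3t-1$ in which some bag contains the set of all $2t$ labelled vertices $\{\boldsymbol{u}_i : i \in [t]\} \cup \{\boldsymbol{v}_j : j \in [t]\}$. This enriched hypothesis is needed so that the decomposition has a distinguished ``gluing bag'' where subsequent operations can attach, and it is precisely the slack between the $2t$ labels and the $3t$ bag size that will be consumed by series composition.

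For the base case, an atomic graph $\boldsymbol{F} \in \mathcal{A}_t$ has exactly $2t$ vertices, all of them labelled, so a single bag of size $2t$ furnishes a tree decomposition of width $2t-1 \le 3t-1$ which trivially contains all labels. The action of $\mathfrak{S}_{2t}$ on the labels leaves the underlying graph $F$ unchanged, so the inductive hypothesis is preserved automatically.

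For parallel composition $\boldsymbol{F} \odot \boldsymbol{F}'$, by hypothesis take decompositions $T, T'$ of $F, F'$ containing bags $B, B'$ that include all labels. After identifying $\boldsymbol{u}_i \sim \boldsymbol{u}'_i$ and $\boldsymbol{v}_i \sim \boldsymbol{v}'_i$, connect $T$ and $T'$ by inserting an edge between $B$ and $B'$. Every vertex shared between $F$ and $F'$ is a labelled vertex and hence lies in both $B$ and $B'$, so the connectivity condition is immediate; the edge- and vertex-coverage conditions hold trivially from $T$ and $T'$. The width is unchanged at $\max(3t-1, 3t-1) = 3t-1$, and $B$ still contains all labels of $\boldsymbol{F} \odot \boldsymbol{F}'$.

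The main step, where the bound $3t-1$ (rather than $2t-1$) becomes tight, is series composition $\boldsymbol{F} \cdot \boldsymbol{F}'$. Here I would again invoke the hypothesis to obtain $T, T'$ with bags $B \supseteq \boldsymbol{u} \cup \boldsymbol{v}$ and $B' \supseteq \boldsymbol{u}' \cup \boldsymbol{v}'$; after the identifications $\boldsymbol{v}_i \sim \boldsymbol{u}'_i$, introduce a fresh bag $C \coloneqq \boldsymbol{u} \cup \boldsymbol{v} \cup \boldsymbol{v}'$ of size exactly $3t$, and attach $C$ to both $B$ and $B'$ in the combined tree, so that the tree looks like $T$--$C$--$T'$. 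The bag $C$ contains all $2t$ labels of $\boldsymbol{F} \cdot \boldsymbol{F}'$, namely $\boldsymbol{u} \cup \boldsymbol{v}'$, preserving the inductive hypothesis, and the width is $3t-1$. The one verification that requires care is the connectivity condition for the identified vertices $\boldsymbol{v}_i = \boldsymbol{u}'_i$: such a vertex appears in the subtree of $T$ containing $B$, in the subtree of $T'$ containing $B'$, and in $C$, and these three parts are glued along the path $B$--$C$--$B'$. Labels $\boldsymbol{u}_i$ only appear in $T$ and in $C$, connected through the edge $B$--$C$; symmetrically for $\boldsymbol{v}'_i$; all other vertices live entirely on one side. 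Finally, permuting labels by $\sigma \in \mathfrak{S}_{2t}$ leaves the unlabelled underlying graph and hence the decomposition untouched, completing the induction. The only genuine obstacle is getting the bookkeeping of the series-composition step right, since this is where one must both verify the connectivity of the identified vertices and witness why exactly $3t$ (and not less) suffices.
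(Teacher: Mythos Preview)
Your proposal is correct and essentially identical to the paper's proof: both strengthen the inductive hypothesis to require a bag containing all $2t$ labelled vertices, and both introduce a fresh bag $\boldsymbol{u}\cup\boldsymbol{v}\cup\boldsymbol{v}'$ of size $3t$ for series composition. The only cosmetic differences are that the paper also inserts a fresh middle node for parallel composition (you connect $B$ and $B'$ directly, which works equally well), and that atomic graphs have \emph{at most} $2t$ vertices rather than exactly $2t$ (labels may coincide, as in $\boldsymbol{I}^{ij}$), though this does not affect your argument.
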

    \begin{proof}By structural induction, it is shown that every $\boldsymbol{F} = (F, \boldsymbol{u}, \boldsymbol{v}) \in \mathcal{L}^+_t$ admits a tree decomposition $\beta \colon V(T) \to 2^{V(F)}$ of width at most $3t-1$ such that the labelled vertices $\boldsymbol{u}$ and $\boldsymbol{v}$ lie together in one bag, i.e.\  there exists $x \in V(T)$ such that $\{\boldsymbol{u}_1, \dots, \boldsymbol{u}_t, \boldsymbol{v}_1, \dots, \boldsymbol{v}_t\} \subseteq \beta(x)$.
		
		  In the base case, i.e.\ if $\boldsymbol{F} \in \mathcal{A}_t$, then $\boldsymbol{F}$ has at most $2t$ vertices, which can all be placed in the single bag of a tree decomposition over the singleton tree.
    
		For the inductive step, let $\boldsymbol{F} = (F, \boldsymbol{u}, \boldsymbol{v})$ and $\boldsymbol{F}' = (F', \boldsymbol{u}', \boldsymbol{v}')$ from $\mathcal{L}^+_t$ be given. Suppose there are tree decompositions $\beta \colon V(T) \to 2^{V(F)}$ and $\beta' \colon V(T') \to 2^{V(F')}$ as in the inductive hypothesis. Let $x \in V(T)$ and  $x' \in V(T')$ be such that the labelled vertices of $\boldsymbol{F}$ and $\boldsymbol{F}'$ lie in $\beta(x)$ and $\beta'(x')$ respectively.
		Let $S$ be the tree obtained by taking the disjoint union of $T$, $T'$, and a fresh vertex $y$, and connecting $x$ and $x'$ to $y$. 
		
		For the graph $\boldsymbol{F} \cdot \boldsymbol{F}'$, an $S$-decomposition is given by the function
		\[
			\gamma \colon z \mapsto \begin{cases}
				\beta(z), & \text{if } z \in V(T), \\
				\beta'(z), & \text{if } z \in V(T'), \\
				\{\boldsymbol{u}_1, \dots, \boldsymbol{u}_t, \boldsymbol{v}'_1, \dots, \boldsymbol{v}'_t, \boldsymbol{v}_1, \dots, \boldsymbol{v}_t \},& \text{if } z = y.
			\end{cases}
		\]
		where one may note that $\boldsymbol{v}_i = \boldsymbol{u}'_i$ for every $i \in [t]$ in $\boldsymbol{F} \cdot \boldsymbol{F}'$. It is easy to check that \cref{def:definition} is satisfied. The decomposition is of width $3t -1$.
		
		For the graph $\boldsymbol{F} \odot \boldsymbol{F}'$, an $S$-decomposition is given by the function
		\[
		\gamma \colon z \mapsto \begin{cases}
			\beta(z), & \text{if } z \in V(T), \\
			\beta'(z), & \text{if } z \in V(T'), \\
			\{\boldsymbol{u}_1, \dots, \boldsymbol{u}_t,\boldsymbol{v}_1, \dots, \boldsymbol{v}_t \},& \text{if } z = y.
		\end{cases}
		\]
		where one may note that $\boldsymbol{u}_i = \boldsymbol{u}'_i$ and $\boldsymbol{v}_i = \boldsymbol{v}'_i$ for every $i \in [t]$ in $\boldsymbol{F} \odot \boldsymbol{F}'$. Again, it is easy to check that \cref{def:definition} is satisfied. The decomposition is of width at most $3t -1$.
	\end{proof}

    \Cref{lem:tw3t} in conjunction with \cref{thm:summary,thm:summary-pos} implies \cref{thm:main2,thm:main3}.
    As a corollary, this yields the upper bound in \cref{thm:main}.
    Indeed, by \cref{thm:sa}, $G \simeq_t^{\textup{SA}} H$ if and only if $G$ and $H$ are homomorphism indistinguishable over the class of graphs of treewidth at most $t-1$. Hence, if $G \simeq_{3t}^{\textup{SA}} H$ then $G \simeq_t^{\textup{L}^+} H$ and in particular $G \simeq_t^{\textup{L}} H$.

    It remains to show the lower bound asserted by \cref{thm:main}, i.e.\  that $3t$ cannot be replaced by $3t-1$ for no $t \geq 1$.
    To that end, first observe that \cref{obs:lt-clique} implies that the bound in \cref{lem:tw3t} is tight.
    However, this syntactic property of the graph class $\mathcal{L}_t$ does not suffice to derive the aforementioned semantic property of $\simeq_{t}^{\textup{SA}}$ and $\simeq_{t}^{\textup{L}}$.
    In fact, it could well be that for all graphs $G$ and $H$ if $G$ and $H$ are homomorphism indistinguishable over the graphs of treewidth at most $3t-2$ also $\hom(K_{3t}, G) = \hom(K_{3t}, H)$ despite that $\tw K_{3t} > 3t-2$.
    That this does not hold is implied by a conjecture of the first author \cite{roberson_oddomorphisms_2022} which asserts that every minor-closed graph class $\mathcal{F}$ which is closed under taking disjoint unions (\emph{union-closed}) is \emph{homomorphism distinguishing closed}, i.e.\  for all $F \not\in \mathcal{F}$ there exist graphs $G$ and $H$ such that $G \equiv_{\mathcal{F}} H$ but $\hom(F, G) \neq \hom(F, H)$.
    Although being generally open, this conjecture was proven by Neuen~\cite{neuen_homomorphism-distinguishing_2023} for the class of graphs of treewidth at most $t$ for every $t$. \Cref{thm:lower-bound} implies the last assertion of \cref{thm:main}.
 
	\begin{theorem} \label{thm:lower-bound}
        For every $t \geq 1$, there exist graphs $G$ and $H$ such that $G \simeq_{3t-1}^{\textup{SA}} H$ and $G \not\simeq_{t}^{\textup{L}} H$.
	\end{theorem}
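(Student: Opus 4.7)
The plan is to leverage the structural content already assembled in the paper, namely Observation~\ref{obs:lt-clique} (the $3t$-clique $K_{3t}$ arises as the underlying unlabelled graph of some element of $\mathcal{L}_t$), together with the homomorphism distinguishing closure result for bounded treewidth classes attributed to Neuen. The strategy is to exhibit $K_{3t}$ as a ``separator'' graph: it has treewidth exactly $3t-1$, so it lies outside $\mathcal{TW}_{3t-2}$, yet it corresponds to a graph in $\mathcal{L}_t$, so its homomorphism counts are controlled by $\simeq_t^{\textup{L}}$.

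Concretely, I would proceed as follows. First, invoke Neuen's theorem with the class $\mathcal{F} = \mathcal{TW}_{3t-2}$: this class is minor-closed and closed under disjoint unions, and since $\tw(K_{3t}) = 3t-1$ we have $K_{3t} \notin \mathcal{F}$. The homomorphism distinguishing closure property then supplies graphs $G$ and $H$ satisfying $G \equiv_{\mathcal{TW}_{3t-2}} H$ while $\hom(K_{3t}, G) \neq \hom(K_{3t}, H)$.

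Next, translate both statements into the language of the relaxation hierarchies. On the one hand, by \Cref{thm:sa}, homomorphism indistinguishability over $\mathcal{TW}_{3t-2}$ is precisely the relation $\simeq_{3t-1}^{\textup{SA}}$, hence $G \simeq_{3t-1}^{\textup{SA}} H$. On the other hand, Observation~\ref{obs:lt-clique} produces a bilabelled graph $\boldsymbol{K} \in \mathcal{L}_t$ with $\soe \boldsymbol{K} \cong K_{3t}$, so the inequality $\hom(K_{3t}, G) \neq \hom(K_{3t}, H)$ rules out $G \equiv_{\mathcal{L}_t} H$. Applying \Cref{thm:main2} (equivalently, the equivalence of items~1 and~5 in \Cref{thm:summary}), we conclude $G \not\simeq_t^{\textup{L}} H$, which completes the proof.

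The only genuinely nontrivial ingredient is Neuen's (currently unpublished) strengthening of the Roberson conjecture for the treewidth-$k$ classes; the rest is an immediate chaining of already-proved characterisations. The main obstacle, therefore, is not the argument itself but the reliance on this external result, and one should make clear in the exposition exactly which instance ($k = 3t-2$) of Neuen's theorem is being used and why the two hypotheses (minor-closed and union-closed) are satisfied by $\mathcal{TW}_{3t-2}$.
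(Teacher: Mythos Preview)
Your proposal is correct and follows essentially the same approach as the paper's proof: both use Neuen's result that $\mathcal{TW}_{3t-2}$ is homomorphism distinguishing closed together with $\tw K_{3t}=3t-1$ to obtain graphs $G,H$ with $G\equiv_{\mathcal{TW}_{3t-2}}H$ but $\hom(K_{3t},G)\neq\hom(K_{3t},H)$, then translate via \Cref{thm:sa} and \Cref{obs:lt-clique}/\Cref{thm:summary}. The only cosmetic difference is that the paper wraps the argument in a proof by contradiction, whereas you proceed directly; the logical content is identical.
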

    \begin{proof}
        Since $\tw(K_{3t}) = 3t-1$, there exist, by \cite[Theorem~2]{neuen_homomorphism-distinguishing_2023}, two graphs $G$ and $H$ such that $G \equiv_{\mathcal{TW}_{3t-2}} H$ and $\hom(K_{3t}, G) \neq \hom(K_{3t}, H)$.
        By \cref{thm:sa}, $G \simeq_{3t-1}^{\textup{SA}} H$.
        By \cref{obs:lt-clique,thm:summary},  $G \not\simeq_{t}^{\textup{L}} H$.
    \end{proof}

    \subsection{Bilabelled Minors}
    
    It is worth noting that the classes of unlabelled graphs underlying the elements of $\mathcal{L}_t$ and $\mathcal{L}_t^+$ are themselves minor-closed and union-closed. Hence, they are subject to the aforementioned conjecture. Furthermore, by the Robertson--Seymour Theorem and \cite{robertson_XIII_1995}, membership in $\mathcal{L}_t$ and $\mathcal{L}_t^+$ can be tested in polynomial time for every fixed $t \geq 1$.  
    \begin{lemma} \label{lem:minor-closed}
        Let $t \geq 1$.
        The class of graphs underlying the elements of $\mathcal{L}_t$ and the class of graphs underlying the elements of $\mathcal{L}_t^+$ are minor-closed and union-closed.
    \end{lemma}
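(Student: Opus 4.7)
The plan is to establish the two closure properties separately. Union-closedness admits a direct constructive argument using the atomic graph $\boldsymbol{J} \in \mathcal{A}_t$, consisting of $2t$ distinctly labelled, edgeless vertices, as a ``spacer'': given $\boldsymbol{F}_1, \boldsymbol{F}_2 \in \mathcal{L}_t^+$ with underlying graphs $F_1, F_2$, the series composition $\boldsymbol{F}_1 \cdot \boldsymbol{J} \cdot \boldsymbol{F}_2$ has underlying graph $F_1 \sqcup F_2$, since the first series composition identifies the out-labels of $\boldsymbol{F}_1$ with the in-labels of $\boldsymbol{J}$, leaving the out-labels of $\boldsymbol{J}$ as $t$ fresh isolated vertices which the second series composition then identifies with the in-labels of $\boldsymbol{F}_2$. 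Since $\boldsymbol{J}$ contributes no edges and series composition is available in both $\mathcal{L}_t$ and $\mathcal{L}_t^+$, this suffices for both classes.

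Minor-closedness is proven by structural induction on the construction of $\boldsymbol{F} \in \mathcal{L}_t^+$, viewed as a tree with atomic leaves and series composition, parallel composition, and permutation action at the internal nodes. It suffices to establish closure under the three basic minor operations: edge deletion, edge contraction, and isolated-vertex deletion, with arbitrary vertex deletion reducing to the latter two after edge deletions. The base case rests on the observation that every graph on at most $2t$ vertices is the underlying graph of some atomic graph in $\mathcal{A}_t$ --- obtained by parallel composing $\boldsymbol{I}^{ij}$-graphs to determine the vertex partition and $\boldsymbol{A}^{ij}$-graphs to determine the edges --- so atomic graphs are trivially minor-closed.

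For the inductive step, each minor operation is to be pushed down the construction tree. Edge deletion is straightforward: one traces the edge to the atomic leaves that introduce it and removes it there by the inductive hypothesis (accounting for the possibility that multiple leaves contribute, as multiple edges are dropped during compositions). For edge contraction of $e = xy$, the plan is to first delete $e$ via edge-deletion closure, reducing the task to vertex identification; when both $x$ and $y$ are labelled vertices of $\boldsymbol{F}$, this is realised by a parallel composition with the atomic graph $\boldsymbol{I}^{ij}$ identifying the corresponding label indices. Isolated-vertex deletion of a labelled vertex proceeds analogously by merging it with another labelled vertex via $\boldsymbol{I}^{ij}$ (the absence of incident edges ensures no spurious edges are introduced).

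The main obstacle lies in handling contractions and isolated-vertex deletions involving vertices of $F$ that are \emph{unlabelled} in $\boldsymbol{F}$; such vertices can arise only via series composition, where the $t$ intermediate vertices become internal. The strategy is to descend into the sub-bilabelled graph in whose construction the vertex was still labelled, apply the identification at that level, and verify that the modified components recompose correctly with the surrounding structure. Throughout, the restriction in $\mathcal{L}_t$ that parallel composition is permitted only with atomic graphs must be respected, but since the $\boldsymbol{I}^{ij}$-graphs used are themselves atomic, this induces no essential new difficulty beyond careful bookkeeping of the labelling conventions across each composition.
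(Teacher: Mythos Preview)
Your union-closedness argument matches the paper's exactly.

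For minor-closedness, your overall strategy---structural induction on the construction tree, pushing minor operations into sub-components---is in the same spirit as the paper's, but the paper makes the inductive hypothesis precise in a way your sketch does not, and this precision is where the work lies. The paper introduces \emph{bilabelled minors} (edge contraction, edge deletion, deletion of \emph{unlabelled} vertices, all performed on the $(t,t)$-bilabelled graph while keeping the labelling fixed) and proves two factorisation lemmas: a bilabelled minor of a parallel or series product factors as a product of bilabelled minors of the factors, up to residual isolated labelled vertices in the series case. Bilabelled-minor closedness of $\mathcal{L}_t$ and $\mathcal{L}_t^+$ then follows by structural induction, with the residual isolated vertices absorbed via parallel composition with suitable $\boldsymbol{I}^{1,t+i}$. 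Underlying-graph minor closedness is finally deduced via a ``Minor Labelling Lemma'': every unlabelled minor $M$ of $\soe(\boldsymbol{F})$ lifts to a bilabelled minor $\boldsymbol{M} \leq \boldsymbol{F}$ whose underlying graph is $M$ plus possibly some isolated labelled vertices.

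Your sketch implicitly needs this bilabelled statement. When you ``descend into the sub-bilabelled graph'' $\boldsymbol{F}_1$ of $\boldsymbol{F} = \boldsymbol{F}_1 \cdot \boldsymbol{F}_2$ and perform an operation there, you need the result $\boldsymbol{F}_1'$ to lie in $\mathcal{L}_t$ \emph{with the same labels as $\boldsymbol{F}_1$}, so that $\boldsymbol{F}_1' \cdot \boldsymbol{F}_2$ has the intended underlying graph. Your stated inductive hypothesis is only that the \emph{underlying graph} after an operation is realised by \emph{some} element of the class; this does not control the labels and hence does not let you recompose with $\boldsymbol{F}_2$. Your ``delete then identify'' reduction for contraction also inserts an intermediate step---identifying two now non-adjacent vertices---that is not itself a minor operation and therefore needs its own closure argument; it is cleaner, as the paper does, to contract the edge directly at the bilabelled level and let the factorisation lemmas transport the contraction through products.
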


    In order to proof \cref{lem:minor-closed}, we introduce bilabelled analogues of graph minors. 
    The tools developed here will also be used in \cref{sec:t-equals-one}.
    
    \begin{definition}\label{def:bminor}
    Let $\boldsymbol{M}$ and $\boldsymbol{F}$ be $(\ell, k)$-bilabelled graphs for some $k, \ell \in \mathbb{N}$.
    Then $\boldsymbol{M}$ is a \emph{bilabelled minor} of $\boldsymbol{F}$, in symbols $\boldsymbol{M} \leq \boldsymbol{F}$, if it can be obtained from $\boldsymbol{F}$ by applying a sequence of the following \emph{bilabelled minor operations}:
    \begin{enumerate}
    \item edge contraction,\label{bm1}
    \item edge deletion,\label{bm2}
    \item deletion of unlabelled vertices,\label{bm3}
    \end{enumerate}
    A family of bilabelled graphs $\mathcal{F}$ is \emph{minor-closed} if it is closed under taking bilabelled minors.
    \end{definition}

    Note that for $(0,0)$-bilabelled graphs, i.e.\  unlabelled graphs, \cref{def:bminor} and the standard definition of graph minors coincide.

    \begin{example} \label{ex:atomic-minor}
        Let $t\geq 1$. The class of atomic graphs $\mathcal{A}_t$ as defined in \cref{def:atomic} is minor-closed.
    \end{example}

    We proceed to prove various lemmas characterising how bilabelled minors behave under the operations applied to bilabelled graphs, namely labelling and unlabelling and series and parallel composition.
    
    \begin{lemma}[Minor Unlabelling Lemma]\label{lem:mul}
    Let $\boldsymbol{M} \leq \boldsymbol{F}$ be bilabelled.
    Then $\down{\boldsymbol{M}} \leq \down{\boldsymbol{F}}$.
    \end{lemma}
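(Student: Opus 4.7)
The plan is to proceed by induction on the length of a sequence of bilabelled minor operations witnessing $\boldsymbol{M} \leq \boldsymbol{F}$, as given by \cref{def:bminor}. The base case of zero operations is immediate, since then $\boldsymbol{M} = \boldsymbol{F}$ and therefore $\soe(\boldsymbol{M}) = \soe(\boldsymbol{F})$. For the inductive step, by transitivity of the minor relation on unlabelled graphs it suffices to verify the following local claim: whenever $\boldsymbol{F}'$ is obtained from $\boldsymbol{F}$ by a single bilabelled minor operation, $\soe(\boldsymbol{F}')$ is a minor of $\soe(\boldsymbol{F})$.

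I would verify this local claim by inspecting each of the three operations from \cref{def:bminor} in turn. For edge contraction \eqref{bm1} and edge deletion \eqref{bm2}, the underlying unlabelled graph of $\boldsymbol{F}'$ is obtained from $\soe(\boldsymbol{F})$ by the same operation on the same edge, so it is manifestly a minor. For deletion of an unlabelled vertex \eqref{bm3}, $\soe(\boldsymbol{F}')$ equals $\soe(\boldsymbol{F}) - v$, which is again a minor of $\soe(\boldsymbol{F})$, realised if need be by first deleting every edge incident to $v$ and then the resulting isolated vertex.

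I do not foresee a serious obstacle. The content of the lemma is simply that the additional restrictions in \cref{def:bminor} (in particular the prohibition on deleting labelled vertices) make the bilabelled minor relation strictly more restrictive than the unlabelled one, so the passage $\soe$ sends a bilabelled minor sequence to a valid unlabelled minor sequence operation-by-operation. The only mild point requiring attention is matching conventions so that vertex deletion is available in the unlabelled minor closure, which is standard.
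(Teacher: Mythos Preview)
Your proposal is correct and follows essentially the same approach as the paper: induction on the length of a witnessing sequence of bilabelled minor operations, with the inductive step reducing to the observation that each bilabelled minor operation from \cref{def:bminor} is also a valid unlabelled minor operation on $\soe(\boldsymbol{F})$. The paper condenses your case-by-case check into the single remark that ``bilabelled minor operations are more restrictive than minor operations,'' but the content is the same.
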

    \begin{proof}
    It is argued by induction on the number of bilabelled minor operations necessary to transform $\boldsymbol{F}$ into $\boldsymbol{M}$. If $\boldsymbol{M} = \boldsymbol{F}$ then $\soe(\boldsymbol{M}) = \soe(\boldsymbol{F})$, and the claim follows. Suppose that $\boldsymbol{M} \leq \boldsymbol{M}' \leq \boldsymbol{F}$ where $\boldsymbol{M}'$ can be transformed into $\boldsymbol{M}$ by applying a single minor operation and $\boldsymbol{M}'$ is minimal among all such graphs with respect to the number of minor operations necessary to derive it from $\boldsymbol{F}$.
    By the inductive hypothesis, $\down{\boldsymbol{M}'} \leq \down{\boldsymbol{F}}$.
    Since bilabelled minor operations are more restrictive than minor operations, any operation of \cref{def:bminor} carried out on $\boldsymbol{M}'$ can be applied to $\down{\boldsymbol{M}'}$. It follows that $\down{\boldsymbol{M}} \leq \down{\boldsymbol{F}}$.
    \end{proof}
    
    \begin{lemma}[Minor Labelling Lemma]\label{lem:mll}
    Let $\boldsymbol{F}$ be bilabelled and $M$ be unlabelled. 
    If $M \leq \down{\boldsymbol{F}}$ then there exists $\boldsymbol{M} \leq \boldsymbol{F}$ such that $\down{\boldsymbol{M}}$ is the disjoint union of $M$ and potential isolated vertices which are labelled in $\boldsymbol{M}$.
    \end{lemma}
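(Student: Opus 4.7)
The plan is to prove the Minor Labelling Lemma by induction on the length of a sequence of (unlabelled) minor operations witnessing $M \leq \down{\boldsymbol{F}}$, lifting each operation to a bilabelled minor operation on $\boldsymbol{F}$ in a natural way. More precisely, fix a sequence $op_1, \dots, op_k$ of edge deletions, edge contractions, and vertex deletions transforming $\down{\boldsymbol{F}}$ into $M$, and let $F_0 = \down{\boldsymbol{F}}, F_1, \dots, F_k = M$ denote the intermediate unlabelled graphs. I will construct a parallel sequence $\boldsymbol{F} = \boldsymbol{F}_0, \boldsymbol{F}_1, \dots, \boldsymbol{F}_k = \boldsymbol{M}$ of bilabelled graphs with $\boldsymbol{F}_i \leq \boldsymbol{F}_{i-1}$ such that the following invariant holds: $\down{\boldsymbol{F}_i} = F_i \sqcup L_i$, where $L_i$ is a set of isolated vertices all of which are labelled in $\boldsymbol{F}_i$, and the vertices of $F_i$ are identified with the corresponding (possibly labelled) non-isolated vertices of $\boldsymbol{F}_i$.

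For the inductive step, consider the operation $op_i$. If $op_i$ is edge deletion or edge contraction, apply the same operation in $\boldsymbol{F}_{i-1}$; these are permitted bilabelled minor operations by \cref{def:bminor}, and contraction simply transfers all labels carried by the two contracted endpoints to the newly formed vertex, so the invariant is preserved with $L_i = L_{i-1}$. If $op_i$ is the deletion of a vertex $v \in V(F_{i-1})$, distinguish two subcases: if the corresponding vertex of $\boldsymbol{F}_{i-1}$ is unlabelled, simply delete it (operation \cref{bm3}); otherwise, delete every edge incident to $v$ in $\boldsymbol{F}_{i-1}$ using operation \cref{bm2} repeatedly, turning $v$ into an isolated labelled vertex and setting $L_i = L_{i-1} \cup \{v\}$. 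In each case $\boldsymbol{F}_i \leq \boldsymbol{F}_{i-1}$, and since labelled vertices in $L_{i-1}$ remain isolated and are never referenced by any $op_j$ with $j > i$ (they are not in $F_{i-1}$), the invariant propagates.

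Setting $\boldsymbol{M} = \boldsymbol{F}_k$ yields $\boldsymbol{M} \leq \boldsymbol{F}$ by transitivity of $\leq$ and $\down{\boldsymbol{M}} = M \sqcup L_k$ with $L_k$ consisting of isolated vertices that are labelled in $\boldsymbol{M}$, as required. The only subtlety to verify carefully is that the labels of $\boldsymbol{F}$ remain well-defined throughout: under edge contraction the label tuples $\boldsymbol{u}, \boldsymbol{v}$ are updated by replacing each occurrence of either contracted endpoint by the new vertex, and under edge deletion the labels are untouched; I expect this bookkeeping to be the only routine but slightly technical point of the argument, with no genuine obstacle beyond it.
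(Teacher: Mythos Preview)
Your proposal is correct and follows essentially the same approach as the paper: induction on the length of the sequence of minor operations, lifting edge deletions and contractions directly and handling deletion of a labelled vertex by instead deleting all its incident edges so that it survives as an isolated labelled vertex. The paper's proof is slightly terser but makes exactly the same case distinction; your explicit invariant with the set $L_i$ is just a more detailed bookkeeping of the same idea.
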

    \begin{proof}
    It is argued by induction on the number of minor operations needed to transform $\down{\boldsymbol{F}}$ into $M$.
    If $M = \down{\boldsymbol{F}}$, let $\boldsymbol{M} \coloneqq \boldsymbol{F}$.
    Now suppose there are $M \leq M' \leq \down{\boldsymbol{F}}$ such that $M'$ can be transformed into $M$ by applying a single minor operation. Then there exists $\boldsymbol{M}' \leq \boldsymbol{F}$ such that $\down{\boldsymbol{M}'}$ is the disjoint union of $M'$ and potential isolated vertices. Distinguish cases:
    \begin{itemize}
    \item $M$ is obtained from $M'$ by deleting or contracting an edge $e$. Then $e$ has a counterpart in $\boldsymbol{M}'$ since $\down{\boldsymbol{M}'}$ contains $M'$. Contracting/deleting the edge there yields the desired~$\boldsymbol{M}$. 
    \item $M$ is obtained from $M'$ by deleting a vertex $v$. If $v$ is unlabelled in $\boldsymbol{M}'$ then it can be deleted from $\boldsymbol{M}'$ yielding $\boldsymbol{M}$. If $v$ is labelled in $\boldsymbol{M}'$, remove all edges incident to $v$ and let $\boldsymbol{M}$ be the resulting graph. In this case, $\down{\boldsymbol{M}}$ is the disjoint union of $M$ and an isolated vertex. \qedhere
    \end{itemize}
    \end{proof}

    Intuitively, the following \cref{lem:mpl,lem:mppl} assert that minor operations commute with bilabelled graph multiplication.

    \begin{lemma}[Minor Parallel Composition Lemma]\label{lem:mppl}
        Let $\boldsymbol{P}_1$ and $\boldsymbol{P}_2$ be $(k,\ell)$-bilabelled graphs.
        \begin{enumerate}
            \item If $\boldsymbol{M}_1$ is a minor of $\boldsymbol{P}_1$ and $\boldsymbol{M}_2$ is a minor of $\boldsymbol{P}_2$ then $\boldsymbol{M}_1\odot \boldsymbol{M}_2$ is a minor of $\boldsymbol{P}_1 \odot \boldsymbol{P}_2$.
            \item If $\boldsymbol{K}$ is a minor of $\boldsymbol{P}_1 \odot \boldsymbol{P}_2$ then there exist $(k, \ell)$-bilabelled $\boldsymbol{M}_1$ and $\boldsymbol{M}_2$ such that
                $\boldsymbol{K} = \boldsymbol{M}_1 \odot \boldsymbol{M}_2$,
                $\boldsymbol{M}_1$ is a minor of $\boldsymbol{P}_1$, and
                $\boldsymbol{M}_2$ is a minor of $\boldsymbol{P}_2$.
        \end{enumerate}
    \end{lemma}
    \begin{proof}
        For the first claim, it is argued by induction on the sum of the number of minor operations applied to transform $\boldsymbol{P}_1$ into $\boldsymbol{M}_1$ and $\boldsymbol{P}_2$ into $\boldsymbol{M}_2$. For the base case, $\boldsymbol{M}_1 = \boldsymbol{P}_1$ and $\boldsymbol{M}_2 = \boldsymbol{P}_2$, and the claim follows trivially.
        
        Now suppose that $\boldsymbol{M}_1$ is obtained from $\boldsymbol{M}'_1$, a minor of $\boldsymbol{P}_1$, by applying a single minor operation. Suppose inductively that $\boldsymbol{M}'_1 \odot \boldsymbol{M}_2$ is a minor of $\boldsymbol{P}_1 \odot \boldsymbol{P}_2$. Distinguish cases:
        \begin{itemize}
        \item $\boldsymbol{M}_1$ is obtained from $\boldsymbol{M}'_1$ by contracting an edge $e$. In $\boldsymbol{M}'_1 \odot \boldsymbol{M}_2$, this edge is either a loop or a proper edge. In the former case, it can be deleted, in the latter case, it can be contracted, yielding in both cases $\boldsymbol{M}_1 \odot \boldsymbol{M}_2$.
        \item $\boldsymbol{M}_1$ is obtained from $\boldsymbol{M}'_1$ by deleting an edge $e$. In $\boldsymbol{M}'_1 \odot \boldsymbol{M}_2$, this edge is either a loop or a proper edge. In both cases, it can be deleted yielding $\boldsymbol{M}_1 \odot \boldsymbol{M}_2$.
        \item $\boldsymbol{M}_1$ is obtained from $\boldsymbol{M}'_1$ by deleting an unlabelled vertex~$v$. Then $v$ is unlabelled in $\boldsymbol{M}'_1 \odot \boldsymbol{M}_2$ and can be deleted. The resulting graph is $\boldsymbol{M}_1 \odot \boldsymbol{M}_2$.
        \end{itemize}

        For the second claim, it is argued by induction on the number of minor operations necessary to transform $\boldsymbol{P}_1 \odot \boldsymbol{P}_2$ into $\boldsymbol{K}$. 
        For the base case, if $\boldsymbol{K} = \boldsymbol{P}_1 \odot \boldsymbol{P}_2$, let $\boldsymbol{M} \coloneqq \boldsymbol{K}$, $\boldsymbol{M}_1 \coloneqq \boldsymbol{P}_1$, and $\boldsymbol{M}_2 \coloneqq \boldsymbol{P}_2$.
    
        Now suppose that $\boldsymbol{K}$ is a minor of $\boldsymbol{P}_1 \odot \boldsymbol{P}_2$. 
        Then there exists a $(k,\ell)$-bilabelled graph  $\boldsymbol{K}'$ such that $\boldsymbol{K}'$ is a minor of $\boldsymbol{P}_1 \odot \boldsymbol{P}_2$ and $\boldsymbol{K}$ is obtained from $\boldsymbol{K}'$ by applying a single minor operation. 
        By the induction hypothesis, there exist $\boldsymbol{M}'_1$ and $\boldsymbol{M}'_2$ such that the assertions of this lemma are satisfied.
        Distinguish cases:
        \begin{itemize}
        \item $\boldsymbol{K}$ is obtained from $\boldsymbol{K}'$ by deleting or contracting an edge~$e$.
        
        The edge $e$ may lie in both $\boldsymbol{M}'_1$ and $\boldsymbol{M}'_2$ or  in only one of the two graphs.
In either case, 
        construct $\boldsymbol{M}_1$ and $\boldsymbol{M}_2$ by respectively deleting or contracting the edge in $\boldsymbol{M}'_1$ and $\boldsymbol{M}'_2$ or leaving the graph unchanged if it does not contain the edge.
        
        \item $\boldsymbol{K}$ is obtained from $\boldsymbol{K}'$ by deleting an unlabelled vertex~$v$.

        Since no vertex is unlabelled under parallel composition, the vertex $v$ is also unlabelled in the graph $\boldsymbol{M}'_1$ or $\boldsymbol{M}'_2$ which it contains.
        It follows that $v$ can be deleted from $\boldsymbol{M}'_i$ leaving the other graph untouched. This yields $\boldsymbol{M}_1$ and $\boldsymbol{M}_2$.
        \qedhere
        \end{itemize}
    \end{proof}

    \begin{lemma}[Minor Series Composition Lemma]\label{lem:mpl}
    Let $\boldsymbol{P}_1$ be $(k,\ell)$-bilabelled and $\boldsymbol{P}_2$ be $(\ell, j)$-bilabelled.
    \begin{enumerate}
    \item If $\boldsymbol{M}_1$ is a minor of $\boldsymbol{P}_1$ and $\boldsymbol{M}_2$ is a minor of $\boldsymbol{P}_2$ then $\boldsymbol{M}_1\cdot \boldsymbol{M}_2$ is a minor of $\boldsymbol{P}_1 \cdot \boldsymbol{P}_2$.
    \item If $\boldsymbol{K}$ is a minor of $\boldsymbol{P}_1 \cdot \boldsymbol{P}_2$ then 
    there exists a $(k, j)$-bilabelled $\boldsymbol{M}$, a $(k,\ell)$-bilabelled $\boldsymbol{M}_1$, and a $(\ell, j)$-bilabelled $\boldsymbol{M}_2$ such that 
    	\begin{enumerate}
    		\item $\boldsymbol{M}$ is the disjoint union of $\boldsymbol{K}$ and potential isolated unlabelled vertices, which are labelled in $\boldsymbol{M}_1$ and $\boldsymbol{M}_2$,\label{mplA}
    		\item $\boldsymbol{M} = \boldsymbol{M}_1 \cdot \boldsymbol{M}_2$, and\label{mplB}
    		\item $\boldsymbol{M}_1$ is a minor of $\boldsymbol{P}_1$
                and $\boldsymbol{M}_2$ is a minor of $\boldsymbol{P}_2$.\label{mplC}
    	\end{enumerate}
    \end{enumerate}
    \end{lemma}

    \begin{proof}
    The proof of the first claim is analogous to the proof of the first claim of \cref{lem:mppl}.
    
    For the second claim, it is argued by induction on the number of minor operations necessary to transform $\boldsymbol{P}_1 \cdot \boldsymbol{P}_2$ into $\boldsymbol{K}$. For the base case, if $\boldsymbol{K} = \boldsymbol{P}_1 \cdot \boldsymbol{P}_2$, let $\boldsymbol{M} \coloneqq \boldsymbol{K}$, $\boldsymbol{M}_1 \coloneqq \boldsymbol{P}_1$, and $\boldsymbol{M}_2 \coloneqq \boldsymbol{P}_2$.
    
    Now suppose that $\boldsymbol{K}$ is a minor of $\boldsymbol{P}_1 \cdot \boldsymbol{P}_2$. Then there exists a $(k,j)$-bilabelled graph  $\boldsymbol{K}'$ such that $\boldsymbol{K}'$ is a minor of $\boldsymbol{P}_1 \cdot \boldsymbol{P}_2$ and $\boldsymbol{K}$ is obtained from $\boldsymbol{K}'$ by applying a single minor operation. 
    By the induction hypothesis, there exist $\boldsymbol{M}'$, $\boldsymbol{M}'_1$, and $\boldsymbol{M}'_2$ such that \cref{mplA,mplB,mplC} are satisfied.
    Distinguish cases:
    \begin{itemize}
    \item $\boldsymbol{K}$ is obtained from $\boldsymbol{K}'$ by deleting or contracting an edge~$e$.
    
    Define $\boldsymbol{M}$ by deleting/contracting the same edge in $\boldsymbol{M}'$.
    The edge $e$ may lie in both $\boldsymbol{M}'_1$ and $\boldsymbol{M}'_2$ or only in one of the two graphs.
    In the first case, both endpoints of $e$ are labelled in both graphs. In either case, 
    construct $\boldsymbol{M}_1$ and $\boldsymbol{M}_2$ by respectively deleting or contracting the edge in $\boldsymbol{M}'_1$ and $\boldsymbol{M}'_2$ or leaving the graph unchanged if it does not contain the edge.
    
    \item $\boldsymbol{K}$ is obtained from $\boldsymbol{K}'$ by deleting an unlabelled vertex~$v$.
    
    If $v$ is among the unlabelled vertices of $\boldsymbol{M}'_i$ for $i\in \{1,2\}$ then define $\boldsymbol{M}$ by deleting $v$ from $\boldsymbol{M}'$.
    It follows that $v$ can be deleted from $\boldsymbol{M}'_i$ leaving the other graph untouched. This yields $\boldsymbol{M}_1$ and $\boldsymbol{M}_2$.
    
    If otherwise $v$ is among the vertices at which $\boldsymbol{M}'_1$  and $\boldsymbol{M}'_2$ are glued together then 
    define $\boldsymbol{M}$ as the graph obtained from $\boldsymbol{M}'$ by deleting all edges incident with $v$ but keeping the vertex.
    By the inductive hypothesis, $\boldsymbol{M}'$ is the disjoint union of $\boldsymbol{K}'$ and isolated unlabelled vertices and via the aforementioned construction the same holds for $\boldsymbol{M}$ and $\boldsymbol{K}$.
    Note that $v$ is neither in-labelled in $\boldsymbol{M}'_1$ nor out-labelled in $\boldsymbol{M}'_2$ as it would otherwise be labelled in~$\boldsymbol{M}$.
    Delete all edges incident to $v$ in both $\boldsymbol{M}'_1$ and $\boldsymbol{M}'_2$. The resulting $\boldsymbol{M}_1$ and $\boldsymbol{M}_2$ satisfy $\boldsymbol{M} = \boldsymbol{M}_1 \cdot \boldsymbol{M}_2$, as desired.
    \qedhere
    \end{itemize}
    \end{proof}

    With these general facts at hand, we proceed to show the following about our graph classes $\mathcal{L}_t$ and $\mathcal{L}_t^+$:
    \begin{lemma} \label{cl:bilabelled-minor-closed}
            Let $t \geq 1$.
            The classes $\mathcal{L}_t$ and $\mathcal{L}_t^+$ are closed under taking bilabelled minors.
        \end{lemma}
        \begin{proof}
            By induction on the structure of elements $\boldsymbol{F} \in \mathcal{L}_t$, it is proven that if $\boldsymbol{K} \leq \boldsymbol{F}$ then also $\boldsymbol{K} \in \mathcal{L}_t$.
            For $\mathcal{L}_t^+$, the proof is very similar, requiring fewer case distinctions. It is therefore omitted.
            If $\boldsymbol{F}$ is atomic then all its minors are atomic by \cref{ex:atomic-minor}. This constitutes the base case of the induction.
            
            If $\boldsymbol{F} = \boldsymbol{F}_1 \odot \boldsymbol{F}_2$ for $\boldsymbol{F}_1 \in \mathcal{A}_t$, $\boldsymbol{F}_2 \in \mathcal{L}_t$ to which the inductive hypothesis applies, and $\boldsymbol{K} \leq \boldsymbol{F}$ then, by \cref{lem:mppl}, there exist $\boldsymbol{K}_1 \leq \boldsymbol{F}_1$ and $\boldsymbol{K}_2 \leq \boldsymbol{F}_2$ such that $\boldsymbol{K} = \boldsymbol{K}_1 \odot \boldsymbol{K}_2$. 
            By \cref{ex:atomic-minor}, $\boldsymbol{K}_1$ is atomic and, by the inductive hypothesis, $\boldsymbol{K}_2 \in \mathcal{L}_t$. Hence, $\boldsymbol{K} \in \mathcal{L}_t$.
    
            If $\boldsymbol{F} = \boldsymbol{F}_1 \cdot \boldsymbol{F}_2$ for two $\boldsymbol{F}_1, \boldsymbol{F}_2 \in \mathcal{L}_t$ to which the inductive hypothesis applies and $\boldsymbol{K} \leq \boldsymbol{F}$ then, by \cref{lem:mpl}, there exist $\boldsymbol{M}$, $\boldsymbol{M}_1$, $\boldsymbol{M}_2$ such that $\boldsymbol{M}_1 \leq \boldsymbol{F}_1$, $\boldsymbol{M}_2 \leq \boldsymbol{F}_2$, and $\boldsymbol{M} = \boldsymbol{M}_1 \cdot \boldsymbol{M}_2$ is the disjoint union of $\boldsymbol{K}$ and potential isolated unlabelled vertices which are labelled both in $\boldsymbol{M}_1$ and $\boldsymbol{M}_2$.
            By the inductive hypothesis, $\boldsymbol{M}_1, \boldsymbol{M}_2 \in \mathcal{L}_t$.
            It remains to remove these isolated vertices. Suppose that the $i$-th out-label of $\boldsymbol{M}_1$ and the $i$-th in-label of $\boldsymbol{M}_2$ are carried by an isolated vertex. Then graph $(\boldsymbol{I}^{1,t+1+i} \odot \boldsymbol{M}_1) \cdot \boldsymbol{M}_2$ does not contain this isolated vertex since taking the parallel composition with $\boldsymbol{I}^{1,t+1+i}$ as defined in \cref{def:atomic} amounts to gluing it to the first in-labelled vertex of $\boldsymbol{M}_1$. Observe that $\boldsymbol{I}^{1,t+1+i} \odot \boldsymbol{M}_1 \in \mathcal{L}_t$.
            Proceeding in this fashion, one can construct $\boldsymbol{K}_1, \boldsymbol{K}_2 \in \mathcal{L}_t$ such that $\boldsymbol{K} = \boldsymbol{K}_1 \cdot \boldsymbol{K}_2 \in \mathcal{L}_t$, as desired.

            If $\boldsymbol{F} = \boldsymbol{F}_1^\sigma$ for $\sigma \in \mathfrak{S}_{2t}$, $\boldsymbol{F}_1 \in \mathcal{L}_t$ to which the inductive hypothesis applies, and $\boldsymbol{K} \leq \boldsymbol{F}$ then $\boldsymbol{K}^{\sigma^{-1}} \leq \boldsymbol{F}_1$ and $\boldsymbol{K}^{\sigma^{-1}} \in \mathcal{L}_t$ by the inductive hypothesis. Hence, $\boldsymbol{K} \in \mathcal{L}_t$, as desired.
        \end{proof}

    This concludes the preparations for the proof of \cref{lem:minor-closed}.

    \begin{proof}[Proof of Lemma \ref{lem:minor-closed}]
        For $(t,t)$-bilabelled graphs  $\boldsymbol{F}$ and $\boldsymbol{F}'$ and $\boldsymbol{J} \in \mathcal{A}_t$ as defined in \cref{def:atomic}, the graph underlying $\boldsymbol{F} \cdot \boldsymbol{J} \cdot \boldsymbol{F}'$  is isomorphic to the disjoint union of the graphs underlying $\boldsymbol{F}$ and $\boldsymbol{F}'$. Hence, the classes of graphs underlying elements of $\mathcal{L}_t$ and $\mathcal{L}_t^+$ are union-closed.

        Given \cref{cl:bilabelled-minor-closed}, it remains to observe that the classes of unlabelled graphs underlying the elements of $\mathcal{L}_t$ and $\mathcal{L}_t^+$ are minor closed.
        By \cref{lem:mll}, if an unlabelled graph $M$ is a minor of $\soe(\boldsymbol{F})$ for some $\boldsymbol{F} \in \mathcal{L}_t$ then there exists $\boldsymbol{M} \leq \boldsymbol{F}$ such that $\soe(\boldsymbol{M})$ is the disjoint union of $M$ and potential isolated vertices which are labelled in $\boldsymbol{M}$. By \cref{cl:bilabelled-minor-closed}, $\boldsymbol{M} \in \mathcal{L}_t$. As in the proof of \cref{cl:bilabelled-minor-closed}, the potential isolated vertices can be identified with other labelled in $\boldsymbol{M}$ by taking the parallel composition of this graph with atomic graphs. Hence, it may be assumed that $M = \soe(\boldsymbol{M})$. This yields the claim.
    \end{proof}

    \subsection{Further Relations between \texorpdfstring{$\mathcal{TW}_t$, $\mathcal{PW}_t$, $\mathcal{L}_t$, and $\mathcal{L}_t^+$}{Graph Classes}}
    \label{sec:further-relations}
    
    This subsection is dedicated to some further relations between the classes of graphs of bounded treewidth or pathwidth, $\mathcal{L}_t$, and $\mathcal{L}_t^+$. 
    These facts give independent proofs for the correspondence between the feasibility of the level-$t$ Sherali--Adams relaxation (without non-negativity constraints), which corresponds to homomorphism indistinguishability over graphs of treewidth (pathwidth) at most $t-1$, as proven by \cite{dell_lovasz_2018,grohe_homomorphism_2022}, and the feasibility of the level-$t$ Lasserre relaxation with and without non-negativity constraints.

	First of all,
    dropping the semidefiniteness constraint \cref{lassere1} of the level-$t$ Lasserre system of equations turns this system essentially into the level-$2t$ Sherali--Adams system of equations without non-negativity constraints, e.g.\  as defined in \cite[Section~2.7]{grohe_homomorphism_2021_arxiv}. This is paralleled by \cref{lem:pw}.
    
	\begin{lemma} \label{lem:pw}
		Let $t \geq 1$.
		For every graph $F$ with $\pw F \leq 2t - 1$,
		there is a graph $\boldsymbol{F} \in \mathcal{L}_t$ whose underlying unlabelled graph is isomorphic to $F$.
	\end{lemma}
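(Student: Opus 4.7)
The plan is to induct on a nice path decomposition of $F$ of width at most $2t-1$, refined so that consecutive bags differ by the introduction or the removal of a single vertex. Such a refinement exists without increasing the width. Each edge of $F$ is assigned to a specific bag containing both of its endpoints, and after fixing a vertex ordering within each $B_j$, I pad this ordering to a tuple $\widetilde{B}_j \in V(F)^{2t}$ by repeating the last vertex whenever $|B_j| < 2t$, so the $2t$ entries of $\widetilde{B}_j$ are precisely the vertices of $B_j$ with controlled multiplicity.

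The goal is to construct inductively a bilabelled graph $\boldsymbol{F}_j \in \mathcal{L}_t$ whose underlying graph is $F$ restricted to $\bigcup_{i \leq j} B_i$, containing exactly the edges assigned to $B_1, \dots, B_j$, and whose $2t$ label positions list $\widetilde{B}_j$. The base case $j=1$ realises $\boldsymbol{F}_1$ as an atomic graph in $\mathcal{A}_t$ by starting from $\boldsymbol{J}$, parallel-composing with the appropriate $\boldsymbol{I}^{ij}$ to enforce the repetitions prescribed by $\widetilde{B}_1$, and parallel-composing with $\boldsymbol{A}^{ij}$ once for each edge of $F[B_1]$ assigned to $B_1$. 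For the inductive step, the transition $\boldsymbol{F}_j \to \boldsymbol{F}_{j+1}$ is performed in two stages. First, a series composition with a carefully constructed transition atomic graph implements either an introduction (when $|B_j|<2t$, there is a duplicated position in $\widetilde{B}_j$; a permutation of $\mathfrak{S}_{2t}$ moves this duplicate to an out-label slot, and series composition replaces it by a fresh isolated vertex standing for $v$) or a forget (symmetrically, a permutation brings $v$ to an out-label slot, and series composition overwrites it with a duplicate of another label position, without adding new vertices or edges to the underlying graph). Second, the edges of $F$ newly assigned to $B_{j+1}$ are inserted by parallel composition with the appropriate $\boldsymbol{A}^{ij}$.

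The main obstacle is the design of the transition atomic graph. Series composition always identifies all $t$ out-labels of $\boldsymbol{F}_j$ with all $t$ in-labels of the next graph, so the transition graph has to itself carry the $t-1$ untouched positions through unchanged from its own in-labels to its own out-labels, while simultaneously effecting a single-position change. I will realise this using parallel compositions of $\boldsymbol{J}$ with copies of $\boldsymbol{I}^{i,t+i}$ for every position $i$ that is meant to pass through unaltered, together with the identifications needed to place either a fresh vertex (intro) or a duplicate of an existing label (forget) at the update position. Once this is verified, the inductive step is routine, and $\boldsymbol{F}_k \in \mathcal{L}_t$ has underlying unlabelled graph isomorphic to $F$, as required.
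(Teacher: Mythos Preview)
Your proposal is essentially the paper's argument: induct along a path decomposition, realise each bag by the $2t$ label positions, move between consecutive bags by series-composing with an atomic ``connector'' built from the $\boldsymbol{I}^{i,t+i}$, and insert edges via parallel composition with the atomic $\boldsymbol{A}^{ij}$. Your intro gadget is exactly the paper's $\boldsymbol{K}_j=\bigodot_{i\neq j}\boldsymbol{I}^{i,t+i}$.

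There is, however, a small but genuine gap in your forget step. Series composition $\boldsymbol{F}_j\cdot\boldsymbol{T}$ only lets you rewrite the $t$ out-label positions, and the new value at the rewritten slot must be either a fresh vertex or one of the \emph{other out-labels} of $\boldsymbol{F}_j$ (carried through $\boldsymbol{T}$ by some $\boldsymbol{I}^{k,t+j}$); it cannot be an in-label of $\boldsymbol{F}_j$, since those vertices are invisible to $\boldsymbol{T}$. For $t\ge 2$ you can first permute so that both the forgotten vertex $v$ and the vertex you want to duplicate sit among the out-labels, and then your scheme works. For $t=1$ there is only one out-label slot, so a pure forget---turning labels $(a,v)$ into $(a,a)$---cannot be realised by any series composition with an atomic graph.

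The fix is immediate and is precisely what the paper does: normalise the path decomposition so that every bag has size exactly $2t$ and adjacent bags overlap in $2t-1$ vertices (this is the pathwidth analogue of the standard smoothing lemma). Then each step is a simultaneous forget-and-introduce, handled uniformly by $\boldsymbol{K}_j$, and the problematic pure forget never arises. Equivalently, in your framework you may simply skip forget nodes and keep the stale label until the next introduce overwrites it; your invariant then weakens to ``the $2t$ labels contain $B_j$'', which suffices.
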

     \begin{proof}If $|V(F)| \leq 2t$ then there exists an atomic graph $\boldsymbol{F} \in \mathcal{A}_t$ whose underlying unlabelled graph is isomorphic to $F$. Otherwise, by \cref{lem:bodlaender8pw}, there exists a path decomposition $\beta \colon V(P) \to 2^{V(F)}$ such that  $|\beta(v)| = 2t$ for all $v \in V(P)$ and $|\beta(s) \cap \beta(t)| = 2t-1$ for all $st \in E(P)$.
		
		It is shown by induction on $|V(P)|$ that for every vertex $r \in V(P)$ of degree at most one there exist $\boldsymbol{u} = u_1\dots u_t \in V(F)^t$, $\boldsymbol{v} = v_1\dots v_t \in V(F)^t$ with $\beta(r) = \{u_1,\dots, u_t, v_1, \dots, v_t\}$ such that $\boldsymbol{F} = (F, \boldsymbol{u},\boldsymbol{v}) \in \mathcal{L}_t$. 
		
		The inductive argument is very similar to the one in the proof of \cref{lem:tw2t}.
		Indeed, since the vertex~$r$ has at most one neighbour, $\ell \leq 1$ in the proof of \cref{lem:tw2t} and the construction does not require arbitrary parallel compositions.
	\end{proof}

    Furthermore, one may drop \cref{lassere1} from the level-$t$ Lasserre system of equations with non-negativity constraints to obtain the level-$2t$ Sherali--Adams system of equations in its original form, i.e.\  with non-negativity constraints. This is paralleled by \cref{lem:tw2t}.

    \begin{lemma} \label{lem:tw2t}
		Let $t \geq 1$.
		For every graph $F$ with $\tw F \leq 2t-1$,
		there is a graph $\boldsymbol{F} \in \mathcal{L}_t^+$ whose underlying unlabelled graph is isomorphic to $F$.
	\end{lemma}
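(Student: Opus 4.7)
The plan is to proceed by structural induction on a nice tree decomposition $(T, \beta)$ of $F$ with bags of size at most $2t$, which exists since $\tw F \leq 2t - 1$. Recall that in a nice tree decomposition every node is a leaf (singleton bag), an introduce node (parent bag equals child bag plus one new vertex), a forget node (parent bag equals child bag minus one vertex), or a join node (both children share the parent's bag). I strengthen the inductive claim as follows: for every subtree $T'$ rooted at $r'$ and every surjection $\lambda \colon [2t] \to \beta(r')$, there exists $\boldsymbol{F}_{T', \lambda} \in \mathcal{L}_t^+$ whose underlying unlabelled graph is the subgraph of $F$ induced by $\bigcup_{r'' \in V(T')} \beta(r'')$ and whose in- and out-label tuples satisfy $u_i = \lambda(i)$ and $v_i = \lambda(t+i)$ for all $i \in [t]$.

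For the base case of a leaf node with $\beta(r') = \{x\}$, all $2t$ positions are forced to be labelled by $x$ and the underlying graph is a single vertex; this is realised by the iterated parallel composition $\bigodot_{1 \leq i < j \leq 2t} \boldsymbol{I}^{ij}$, which collapses all label positions onto a single vertex. At a join node I invoke induction on both children with the same $\lambda$ and parallel compose the resulting bilabelled graphs; since both share exactly the labelled vertex set $\beta(r')$ with matching labels, the parallel composition glues them along $\beta(r')$ without introducing new vertices or edges. At an introduce node that adds $v$ together with its incident edges, I apply induction to the child with a labelling that coincides with $\lambda$ outside $\lambda^{-1}(v)$ and reassigns the positions in $\lambda^{-1}(v)$ to bag vertices of the child, and then parallel compose with an atomic graph that places $v$ at the positions $\lambda^{-1}(v)$ and realises its adjacencies (using $\mathfrak{S}_{2t}$ to align labels as needed).

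At a forget node removing $w$, I first use an element of $\mathfrak{S}_{2t}$ to reduce to parent labellings $\lambda$ for which the in-label positions carry $|\lambda([t])| \geq |\beta(r')| - t + 1$ distinct vertices; this is always possible when $|\beta(r')| \leq 2t - 1$, which holds at forget nodes. Then I invoke induction on the child using a labelling that agrees with $\lambda$ on $[t]$, places $w$ on at most $t$ out-label positions, and distributes the remaining parent bag vertices over the other out-label positions. Finally I series compose the child's bilabelled graph with an edgeless atomic graph whose in-labels match the child's out-labels and whose out-labels realise $\lambda$ on $[t+1, 2t]$. Since this relabelling atomic graph has no edges and all of its vertices are labelled, no new edge or unlabelled vertex appears in the underlying graph, while $w$ is internalised exactly as required.

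The main obstacle is coordinating the label bookkeeping across the inductive steps, most acutely at the forget step: one must show that for every parent labelling $\lambda$ there is a compatible child labelling that renders $w$ amenable to internalisation by series composition without corrupting the labels of the other bag vertices or altering the underlying graph. This is handled by exploiting the freedom to apply arbitrary $\mathfrak{S}_{2t}$ actions together with a simple counting argument showing that whenever $|\beta(r')| \leq 2t - 1$ the in-label positions can always be filled with enough distinct vertices to reduce the forget step to the tractable case. The same idea also handles the alignment needed in the introduce step.
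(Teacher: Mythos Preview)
Your introduce step does not work as written. Parallel composition with an atomic graph can never add a new vertex to the underlying graph: by definition every vertex of an atomic $(t,t)$-bilabelled graph is labelled, and parallel composition identifies labelled vertices position by position, so every vertex of the atomic factor is glued onto an already-labelled vertex of the other factor. Concretely, at an introduce node you have $|\beta(c)|\le 2t-1$, so any surjection $\lambda'\colon[2t]\to\beta(c)$ on the child repeats at least one vertex, say $\lambda'(j)=\lambda'(j')=w$ with $j\in\lambda^{-1}(v)$. If your atomic graph carries a fresh vertex $a$ at position $j$ and $w$ at position $j'$, then in the parallel composition both $a$ and $w$ are identified with the single vertex $w$ of $\boldsymbol{F}_c$, so $a$ is absorbed into $w$ rather than becoming the new vertex $v$. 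No amount of $\mathfrak{S}_{2t}$-alignment fixes this, because the obstruction is that $\lambda'$ is not injective.

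The paper's proof sidesteps exactly this difficulty. It does not use a nice tree decomposition; instead it first normalises (via a Bodlaender-type lemma) to a tree decomposition in which \emph{every} bag has size exactly $2t$ and adjacent bags intersect in exactly $2t-1$ vertices. The labellings can then always be taken to be bijections onto the bag. Passing from a child bag to the root bag amounts to swapping a single labelled vertex for a fresh one, and this is achieved by \emph{series} composition with the gadget $\boldsymbol{K}_{j}\coloneqq\bigodot_{i\neq j}\boldsymbol{I}^{i,t+i}$, which has $t+1$ vertices and leaves position $j$ unglued; series-composing with $\boldsymbol{K}_j$ genuinely introduces a new labelled vertex at position $j$. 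Only after this are the children combined by (arbitrary) parallel composition, which now merely glues along the common $2t$ labelled vertices and adds the missing edges inside the root bag via atomic $\boldsymbol{A}^{ij}$. If you want to salvage the nice-tree-decomposition approach, the fix is the same: at an introduce node use series composition with a $\boldsymbol{K}_j$-type gadget to create the new vertex, and reserve parallel composition for join nodes and for adding edges.
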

     \begin{proof}If $|V(F)| \leq 2t$ then there exists an atomic graph $\boldsymbol{F} \in \mathcal{A}_t$ whose underlying unlabelled graph is isomorphic to $F$. Otherwise, by \cref{lem:bodlaender8}, there exists a tree decomposition $\beta \colon V(T) \to 2^{V(F)}$ of $F$ such that $|\beta(v)| = 2t$ for all $v \in V(T)$ and $|\beta(s) \cap \beta(t)| = 2t-1$ for all $st \in E(T)$. It is shown by induction on $|V(T)|$ that for every $r \in V(T)$ there exist $\boldsymbol{u} = u_1\dots u_t \in V(F)^t$, $\boldsymbol{v} = v_1\dots v_t \in V(F)^t$ with $\beta(r) = \{u_1,\dots, u_t, v_1, \dots, v_t\}$ such that $\boldsymbol{F} = (F, \boldsymbol{u},\boldsymbol{v}) \in \mathcal{L}_t^+$. 
    		Observe that this implies that the labels of $\boldsymbol{F}$ lie on distinct vertices of $F$.
    		
    		In the base case, when $|V(T)| = 1$, the tuples  $\boldsymbol{u}$ and $\boldsymbol{v}$ can be chosen arbitrarily subject to the desired condition and $\boldsymbol{F}$ is an atomic graph.
    		
    		Let $|V(T)| \geq 2$ and $r \in V(T)$ be arbitrary. 
            Write $s_1, \dots, s_\ell$ for the neighbours of $r$ in $T$.
    		First a bilabelled graph $\boldsymbol{F}_i \in \mathcal{L}_t^+$ is constructed for each $i \in [\ell]$.
    		Let $T_i$ be the connected component of $T \setminus \{r\}$ containing $s_i$.
    		Let $F_i$ be the induced subgraph of $F$ on $\bigcup_{t \in V(T_i)} \beta(t)$. 
    		The restriction of $\beta$ to $V(T_i)$ is a tree decomposition of $F_i$ with the properties stated in the inductive hypothesis. 
    		Hence, there exist $\boldsymbol{u}^i = u^i_1\dots u^i_t \in V(F_i)^t$, $\boldsymbol{v}^i = v^i_1\dots v^i_t \in V(F_i)^t$ with $\beta(s_i) = \{u^i_1,\dots, u^i_t, v^i_1, \dots, v^i_t\}$ such that $\boldsymbol{F}_i \coloneqq (F_i, \boldsymbol{u}^i, \boldsymbol{v}^i) \in \mathcal{L}_t^+$.
            
           	Let $x_1, \dots, x_{2t}$ denote the vertices in $\beta(r)$. By permuting labels, it can be guaranteed that for every $i \in [\ell]$, the tuples $u^i_1\dots u^i_tv^i_1\dots v^i_t$ and $x_1 \dots x_{2t}$ differ at precisely one index $j_i \in [2t]$.
    		Recall the bilabelled graphs defined in \cref{obs:atomic} and $\boldsymbol{K}^j$ from \cref{eq:kj,fig:kj}.
        	Let $\boldsymbol{F}'_i \coloneqq \boldsymbol{K}^{j_i} \cdot \boldsymbol{F}_i$ if $j_i \leq t$ and  $\boldsymbol{F}'_i \coloneqq \boldsymbol{F}_i \cdot \boldsymbol{K}^{j_i - t}$ otherwise.
        	Intuitively, the bilabelled graph $\boldsymbol{F}'_i$ is obtained from $\boldsymbol{F}_i$ by adding a fresh vertex and moving the $j_i$-th label to this vertex.
            Since $\boldsymbol{F}_i \in \mathcal{L}_t^+$ and $\boldsymbol{K}^{j_i} \in \mathcal{A}_t$,
            it holds that $\boldsymbol{F}_i' \in \mathcal{L}_t^+$.
        	Finally, let $\boldsymbol{F} = \boldsymbol{F}'_1 \odot \dots \odot \boldsymbol{F}'_\ell \odot\bigodot_{x_ix_j \in E(F)} \boldsymbol{A}^{ij}$.
    	\end{proof}

    Since the diagonal entries of a positive semidefinite matrix are necessarily non-negative, \cref{lassere1} implies that any solution $(y_I)$ to the level-$t$ Lasserre system of equations is such that $y_I \geq 0$ for all $I \in \binom{V(G) \times V(H)}{\leq t}$. Hence, such a solution is a solution to the level-$t$ Sherali--Adams system of equations as well. 
    This is paralleled by \cref{lem:twt}.

	\begin{lemma} \label{lem:twt}
		Let $t \geq 1$.
		For every graph $F$ with $\tw F \leq t-1$,
		there is a graph $\boldsymbol{F} \in \mathcal{L}_t$ whose underlying unlabelled graph is isomorphic to $F$.
	\end{lemma}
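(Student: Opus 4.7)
The plan is to build $\boldsymbol{F}$ inductively over a rooted tree decomposition of $F$, using series composition to attach each child subtree to its parent bag and a subsequent parallel composition with atomic graphs to merge the spurious duplicate copies of interface vertices that series composition introduces.

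Fix a tree decomposition $(\mathcal{T}, \beta)$ of $F$ with $|\beta(s)| \leq t$ for all $s$, rooted at some $r$. For each node $s$, choose a surjection $\pi_s \colon [t] \to B_s := \beta(s)$, which exists since $|B_s| \leq t$. Two atomic $(t,t)$-bilabelled graphs will drive the construction. The \emph{bag graph} $\boldsymbol{B}_s := (F[B_s], \pi_s, \pi_s)$ has underlying graph $F[B_s]$ and uses $\pi_s$ for both in- and out-labels. For each parent--child pair $(s, s')$, the edgeless \emph{transition graph} $\boldsymbol{T}_{s, s'}$ has vertex set $B_s \cup B_{s'}$, in-labels $\pi_s$, and out-labels $\pi_{s'}$, with each $v \in B_s \cap B_{s'}$ kept as a single vertex carrying its $\pi_s$-positions as in-labels and its $\pi_{s'}$-positions as out-labels. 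The swap $\boldsymbol{T}_{s, s'}^*$ lies in $\mathcal{L}_t$ by closure under the $\mathfrak{S}_{2t}$-action. Both graphs are atomic because every vertex carries at least one label.

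I would then define, by induction on subtree size, a graph $\boldsymbol{F}_s \in \mathcal{L}_t$ with both in- and out-labels $\pi_s$ whose underlying unlabelled graph is the subgraph of $F$ on the vertices appearing in the subtree rooted at $s$, with exactly the edges of $F$ lying in some bag of that subtree. For a leaf, set $\boldsymbol{F}_s := \boldsymbol{B}_s$. For an internal node $s$ with children $s_1, \dots, s_k$, set $\boldsymbol{F}_s^{(0)} := \boldsymbol{B}_s$ and for $i = 1, \dots, k$
\[
\boldsymbol{F}_s^{(i)} := \bigl( \boldsymbol{F}_s^{(i-1)} \cdot \boldsymbol{T}_{s, s_i} \cdot \boldsymbol{F}_{s_i} \cdot \boldsymbol{T}_{s, s_i}^* \bigr) \odot \boldsymbol{M}_i, \qquad \boldsymbol{M}_i := \bigodot_{v \in B_s \setminus B_{s_i}} \boldsymbol{I}^{p_v,\, p_v + t},
\]
where $p_v \in [t]$ is any position satisfying $\pi_s(p_v) = v$. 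Finally, take $\boldsymbol{F}_s := \boldsymbol{F}_s^{(k)}$ and $\boldsymbol{F} := \boldsymbol{F}_r$. Each constituent operation --- series composition, parallel composition with the atomic graph $\boldsymbol{M}_i$, and the label swap inside $\boldsymbol{T}_{s, s_i}^*$ --- is permitted by the definition of $\mathcal{L}_t$.

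The main obstacle is the bookkeeping around $\boldsymbol{T}_{s, s_i}^*$. Series composition correctly identifies vertices in $B_s \cap B_{s_i}$ across the boundary, but each $v \in B_s \setminus B_{s_i}$ appears only out-labelled inside $\boldsymbol{T}_{s, s_i}^*$ and is therefore not identified by series composition with the original $v$ already present at in-label position $p_v$ of $\boldsymbol{F}_s^{(i-1)}$; this creates a spurious duplicate vertex. Parallel composition with $\boldsymbol{I}^{p_v,\, p_v + t}$ fuses in-label position $p_v$ to out-label position $p_v + t$ and glues the two copies into a single vertex, and a single such factor per $v$ suffices because the other label positions assigned to $v$ sit on the same vertex inside each operand. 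A routine induction on subtree size then yields that the underlying unlabelled graph of $\boldsymbol{F}_r$ has vertex set $V(F)$ and edge set $E(F)$, with duplicate edges produced when an edge of $F$ lies in several bags automatically removed by series composition.
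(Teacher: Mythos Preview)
Your construction is correct and follows the same inductive scheme as the paper: build a $(t,t)$-bilabelled graph over a rooted tree decomposition while maintaining the invariant that in- and out-labels coincide on the current bag, attach each child subtree by sandwiching it between two atomic transition gadgets via series composition, and repair the resulting duplicate interface vertices with a parallel composition by atomic $\boldsymbol{I}$-graphs.

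The execution differs in one noteworthy respect. The paper first normalises the decomposition via the Bodlaender-type \cref{lem:bodlaender8} so that every bag has size exactly $t$ and adjacent bags differ in a single vertex; this lets it use the fixed one-slot gadget $\boldsymbol{K}_j$ and combine all children in a single long series product followed by one global parallel product. Your version works directly with an arbitrary width-$(t-1)$ decomposition, using the general edgeless transition graphs $\boldsymbol{T}_{s,s'}$ (which are atomic since every vertex of $B_s\cup B_{s'}$ receives a label from $\pi_s$ or $\pi_{s'}$) and processing children one at a time. This avoids invoking the extra normalisation lemma at the price of slightly heavier per-step bookkeeping; otherwise the two arguments are interchangeable.
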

 \begin{proof}If $|V(F)| \leq t$ then there exists an atomic graph $\boldsymbol{F} \in \mathcal{A}_t$ whose underlying unlabelled graph is isomorphic to $F$. Otherwise, by \cref{lem:bodlaender8}, there exists a tree decomposition $\beta \colon V(T) \to 2^{V(F)}$ of $F$ such that $|\beta(v)| = t$ for all $v \in V(T)$ and $|\beta(s) \cap \beta(t)| = t-1$ for all $st \in E(T)$. It is shown by induction on $|V(T)|$ that for every $r \in V(T)$ there exist $\boldsymbol{u} = u_1\dots u_t \in V(F)^t$ with $\beta(r) = \{u_1,\dots, u_t\}$ such that $\boldsymbol{F} = (F, \boldsymbol{u},\boldsymbol{u}) \in \mathcal{L}_t$.
		
		In the base case, when $|V(T)| = 1$, the tuple $\boldsymbol{u}$ can be chosen arbitrarily  and $\boldsymbol{F}$ is an atomic graph.
		
		Let $|V(T)| \geq 2$ and $r \in V(T)$ be arbitrary. 
		Write $s_1, \dots, s_\ell$ for the neighbours of $r$ in $T$.
		First a graph $\boldsymbol{F}_i \in \mathcal{L}_t$ is constructed for each $i \in [\ell]$.
		Let $T_i$ be the connected component of $T \setminus \{r\}$ containing $s_i$.
		Let $F_i$ be the induced subgraph of $F$ on $\bigcup_{t \in V(T_i)} \beta(t)$. 
		The restriction of $\beta$ to $V(T_i)$ is a tree decomposition of $F_i$ with the properties listed in the inductive hypothesis. 
		Hence, there exist $\boldsymbol{u}^i = u^i_1\dots u^i_t \in V(F_i)^t$ with $\beta(s_i) = \{u^i_1,\dots, u^i_t\}$ such that $\boldsymbol{F}_i \coloneqq (F_i, \boldsymbol{u}^i, \boldsymbol{u}^i) \in \mathcal{L}_t$.
		
		Let $x_1, \dots, x_{t}$ denote the vertices in $\beta(r)$. By permuting labels, it can be guaranteed that for every $i \in [\ell]$, the tuples $u^i_1\dots u^i_t$ and $x_1 \dots x_{t}$ differ at precisely one index $j_i \in [t]$.
		Recall the bilabelled graphs defined in \cref{obs:atomic} and $\boldsymbol{K}_j$ from \cref{eq:kj,fig:kj}. 
        Let $\boldsymbol{F}_i' \coloneqq \boldsymbol{I}^{j_i, t+j_i} \odot (\boldsymbol{K}_{j_i} \cdot \boldsymbol{F} \cdot \boldsymbol{K}_{j_i})$.
		By construction, $\boldsymbol{F}_i' \in \mathcal{L}_t$. The labelled vertices of $\boldsymbol{F}_i$ differ from those of $\boldsymbol{F}_i$ in $x_{j_i}$. 
		Finally, let
		\[
			\boldsymbol{F} \coloneqq (\boldsymbol{I}^{1, t+1} \odot \dots \odot \boldsymbol{I}^{t, 2t}) \odot (\boldsymbol{F}_1' \cdot \dots \cdot \boldsymbol{F}_\ell')  \odot \bigodot_{x_ix_j \in E(F)} \boldsymbol{A}^{ij}.
		\]
		This graph is as desired.
	\end{proof}

	\subsection{The Classes \texorpdfstring{$\mathcal{L}_1$}{L1} and \texorpdfstring{$\mathcal{L}_1^+$}{L1+}}
	\label{sec:t-equals-one}

    The classes $\mathcal{L}_1$ and $\mathcal{L}_1^+$ can be identified as the class of outerplanar graphs and as the class of graphs of treewidth at most two, respectively.
    This yields \cref{thm:main4}.

	\begin{theorem} \label{prop:l1plus}
		The class of unlabelled graphs underlying an element of $\mathcal{L}_1^+$ coincides with the class of graphs of treewidth at most two.
	\end{theorem}
    \begin{proof}Given \cref{lem:tw3t,lem:tw2t}, it suffices to show that if a graph $F$ is such that $\tw F = 2$ then there is a graph $\boldsymbol{F} \in \mathcal{L}_1^+$ whose underlying unlabelled graph is isomorphic to $F$. 
		
		By \cref{lem:bodlaender8}, there exists a tree decomposition $\beta \colon V(T) \to 2^{V(F)}$ of $F$ such that $|\beta(v)| = 3$ for all $v \in V(T)$ and $|\beta(s) \cap \beta(t)| = 2$ for all $st \in E(T)$. It is shown by induction on $|V(T)|$ that for every $r \in V(T)$ and $x \neq y \in \beta(r)$ the graph $\boldsymbol{F} = (F, x, y)$ is in $\mathcal{L}_1^+$.
		
		If $|V(T)| = 1$, write $\{x, y, z\}$ for the unique bag. Since $F$ has treewidth $2$, it is isomorphic to the $3$-clique which is the underlying unlabelled graph of $\boldsymbol{A}^{12} \odot (\boldsymbol{A}^{12} \cdot \boldsymbol{A}^{12})$, cf.\  \cref{obs:lt-clique}, which is contained in~$\mathcal{L}_1^+$ by construction.
		
		Assuming $|V(T)| \geq 2$, let $r \in V(T)$ be arbitrary.
		Write $\beta(r) = \{x_1, x_2, x_3\}$.
		Partition the neighbours of $r$ in $T$ in three sets $X_1, X_2, X_3$ such that $s \in X_i$ iff $x_i \in \beta(r) \setminus \beta(s)$ for $i \in [3]$.
		
		For every neighbour $s$ of $r$, let $T_s$ be the connected component of $T \setminus \{r\}$ containing $s$.
		Let $F_s$ be the induced subgraph of $F$ on $\bigcup_{t \in V(T_s)} \beta(t)$. 
		The restriction of $\beta$ to $V(T_s)$ is a tree decomposition of $F_s$ with the properties listed in the inductive hypothesis. 
		Hence, for every $s$, there exists $\boldsymbol{F}_s \in \mathcal{L}_1^+$ as stipulated. By permuting labels, it may be supposed that for every $s \in X_1$ the labels of $\boldsymbol{F}_s$ lie on $x_2x_3$, for $\boldsymbol{F}_s$ with $s \in X_2$ on $x_1x_3$, and for $\boldsymbol{F}_s$ with $s \in X_3$ on $x_1x_2$.
		For $i \in [3]$, let 
        \[
            \boldsymbol{F}_i \coloneqq \begin{cases}
                \bigodot_{s \in X_i} \boldsymbol{F}_s, & \text{if } X_i \neq \emptyset, \\
                \boldsymbol{A}^{12}, & \text{if } X_i = \emptyset \text{ and the two vertices in  } \beta(r) \setminus \{x_i\} \text{ are adjacent},\\
                \boldsymbol{J}, & \text{otherwise.}
            \end{cases}
        \]
		Finally, let $\boldsymbol{F} \coloneqq \boldsymbol{F}_2 \odot (\boldsymbol{F}_3 \cdot \boldsymbol{F}_1)$. This graph is as desired if $x_1,x_3$ are required to be labelled. For other choices of labels, $\boldsymbol{F}_1, \boldsymbol{F}_2, \boldsymbol{F}_3$ can be permuted and if necessary transposed yielding any desired labelling. 
	\end{proof}

	A graph $F$ is \emph{outerplanar} if it does not have $K_4$ or $K_{2,3}$ as a minor. Equivalent, it is outerplanar if it has a planar drawing such that all its vertices lie on the same face~\cite{syslo_characterisations_1979}.
	
	\begin{theorem} \label{lem:op}
		The class of unlabelled graphs underlying an element of $\mathcal{L}_1$ coincides with the class of outerplanar graphs.
	\end{theorem}

    Before we prove \cref{lem:op}, we derive the following corollary:
     
	\begin{corollary}
        \label{lem:connected}
		If $G \equiv_{\mathcal{L}_1} H$ then $G$ is connected iff $H$ is connected.
	\end{corollary}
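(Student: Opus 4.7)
The plan is to establish the stronger statement $c(G) = c(H)$, where $c(\cdot)$ denotes the number of connected components; this implies the corollary. By \Cref{lem:op} and \Cref{thm:summary}, the hypothesis $G \equiv_{\mathcal{L}_1} H$ yields a partial $1$-equivalence $\phi : \widehat{\mathcal{A}}^1_G \to \widehat{\mathcal{A}}^1_H$, i.e.\@ a $*$-algebra isomorphism preserving matrix products and transposes, sending $I \mapsto I$, $J \mapsto J$, $A_G \mapsto A_H$, and commuting with the Schur product by the atomic tensor $I$.

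First I would observe that the degree matrix satisfies $D_G = I \odot (A_G J)$ and hence lies in $\widehat{\mathcal{A}}^1_G$; consequently the Laplacian $L_G = D_G - A_G$ is in $\widehat{\mathcal{A}}^1_G$ and $\phi(L_G) = L_H$. The orthogonal projector $P_G$ onto $\ker L_G$ is a polynomial in $L_G$ by the spectral theorem, so it lies in $\widehat{\mathcal{A}}^1_G$; since $\phi$ preserves minimal polynomials, $L_G$ and $L_H$ share the same spectrum as a set, and $\phi(P_G) = P_H$ via the same interpolating polynomial. Recalling that $c(G) = \dim \ker L_G = \tr P_G$, the problem reduces to showing that $\phi$ preserves the matrix trace on $\widehat{\mathcal{A}}^1_G$.

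For this, I would use the identity $\tr M = \soe(I \odot M)$ together with the fact that $\phi$ commutes with $I \odot \cdot$, which reduces trace preservation to sum-of-entries preservation. The latter follows from the identification of $\widehat{\mathcal{A}}^1_G$ with the linear span of $\{\boldsymbol{S}_G : \boldsymbol{S} \in \mathcal{L}_1\}$ underlying the proof of \Cref{thm:hi-l}, together with the fact that $\phi(\boldsymbol{S}_G) = \boldsymbol{S}_H$ for every $\boldsymbol{S} \in \mathcal{L}_1$ (implied by $\phi$ mapping atomic tensors correctly and preserving the operations that generate $\mathcal{L}_1$ per \Cref{def:l-lplus}). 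Indeed, for each such $\boldsymbol{S}$,
\[
    \soe \boldsymbol{S}_G = \hom(\soe \boldsymbol{S}, G) = \hom(\soe \boldsymbol{S}, H) = \soe \boldsymbol{S}_H = \soe \phi(\boldsymbol{S}_G),
\]
using the hypothesis and that $\soe \boldsymbol{S}$ is an outerplanar graph by \Cref{lem:op}. By linearity, $\soe$ is preserved on $\widehat{\mathcal{A}}^1_G$, so $c(G) = \tr P_G = \tr \phi(P_G) = \tr P_H = c(H)$.

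The main obstacle will be the careful invocation of the identification $\widehat{\mathcal{A}}^1_G = \operatorname{span}\{\boldsymbol{S}_G : \boldsymbol{S} \in \mathcal{L}_1\}$, which underpins but is not stated as such in \Cref{thm:hi-l}, together with verifying that the spectral projector $P_G$ is indeed a polynomial in $L_G$ correctly transported to $P_H$ by the partial equivalence $\phi$.
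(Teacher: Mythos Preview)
Your argument is correct. Both proofs deduce $c(G)=c(H)$ via the Laplacian, but the paper's route is shorter and more directly combinatorial: writing the Laplacian as $\boldsymbol{D}_G - \boldsymbol{A}_G$ for the bilabelled graph $\boldsymbol{D}=(D,v,v)$ with a single edge and both labels on $v$, it observes that every summand in the expansion of $\tr((\boldsymbol{D}-\boldsymbol{A})^i)$ is a cactus graph (hence outerplanar), so Newton's identities yield Laplacian cospectrality directly from the hypothesis $G\equiv_{\mathcal{L}_1}H$, and cospectral Laplacians have the same number of components. Your approach instead works inside $\widehat{\mathcal{A}}^1_G$ via the partial $1$-equivalence $\phi$ supplied by \Cref{thm:summary}, transporting $L_G$ and its kernel projector to the $H$-side by functional calculus and then verifying trace preservation separately. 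This invokes heavier machinery (the span identification underlying \Cref{thm:hi-l} and the structure of $\phi$) but isolates the algebraic mechanism cleanly and would adapt immediately to other spectral invariants expressible in $\widehat{\mathcal{A}}^1_G$. As a minor shortcut, note that the proof of \Cref{thm:hi-l} already records that any partial $t$-equivalence is trace- and hence sum-preserving, so you could cite that rather than re-deriving sum preservation from the homomorphism counts.
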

    \begin{proof}Let $\boldsymbol{D} = (D, v, v)$ be the $(1,1)$-bilabelled graph with $V(D) = \{u, v\}$ and $E(D) = \{uv\}$ and write $\boldsymbol{A}$ as before for the $(1,1)$-bilabelled graph corresponding to the adjacency matrix. For every graph $G$, the homomorphism matrix $\boldsymbol{D}_G - \boldsymbol{A}_G$ equals its Laplacian matrix. By~\cite[Lemma~4]{van_dam_which_2003}, if two graphs $G$ and $H$ have cospectral Laplacians then they have the same number of connected components.
		The former condition holds iff $\tr\left( (\boldsymbol{D}_G - \boldsymbol{A}_G)^i \right) = \tr\left((\boldsymbol{D}_H - \boldsymbol{A}_H)^i\right)$ for all $i \in \mathbb{N}$ by Newton's identities \cite{dawar_descriptive_2019}. The bilabelled graphs appearing as summands in the expression $\tr\left((\boldsymbol{D} - \boldsymbol{A})^i\right)$ are cactus graphs and hence outerplanar. By \cref{lem:op}, if $G \equiv_{\mathcal{L}_1} H$ then $G$ and $H$ have cospectral Laplacians and hence the same number of connected components.
	\end{proof}

    Towards proving \cref{lem:op}, we define a class of $(1,1)$-bilabelled graphs whose underlying unlabelled graphs are outerplanar.
    In general, carefully imposing conditions on where the labels are placed is essential for ensuring that the class of bilabelled graphs is closed under the desired operations and also generated by atomic graphs under them, cf.\  \cite[p.\ ~2271]{rattan_weisfeiler_2023}.
    
	\begin{definition} \label{def:op}
        The \emph{expansion} of a $(1,1)$-bilabelled graphs $\boldsymbol{F} = (F, u, v)$ is the graph $F'$ obtained from $F$ by adding a path of length two between $u$ and $v$, i.e.\  $V(F') \coloneqq V(F) \sqcup \{x\}$ and $E(F') \coloneqq E(F) \sqcup \{ux, xv\}$.
		Write $\mathcal{OP}$ for the class of $(1,1)$-bilabelled graphs $\boldsymbol{F}$ whose expansion is outerplanar.
	\end{definition}

    Note that the above definition implies that for all $\boldsymbol{F} = (F, u, v) \in \mathcal{OP}$ the underlying unlabelled graph $F$ is outerplanar as it is a minor of the expansion of $\boldsymbol{F}$.
    If the two labels of $\boldsymbol{F}$ coincide then its expansion is obtained by adding a dangling edge and outerplanar iff the underlying unlabelled graph of $\boldsymbol{F}$ is outerplanar.

	Write $\boldsymbol{A}$ and $\boldsymbol{I}$ for the $(1,1)$-bilabelled graphs corresponding to the adjacency matrix and the identity matrix respectively.
	In the notation of \cref{obs:atomic}, $\boldsymbol{A} = \boldsymbol{A}^{12}$ and $\boldsymbol{I} = \boldsymbol{I}^{12}$. These graphs are depicted in \cref{fig:atomic-t1}.

	\begin{lemma} \label{lem:closure}
		The class $\mathcal{OP}$ possesses the following closure properties:
		\begin{enumerate}
			\item If $\boldsymbol{F} \in \mathcal{OP}$ then $\boldsymbol{F}^* \in \mathcal{OP}$.
			\item If $\boldsymbol{F} \in \mathcal{OP}$ then $\boldsymbol{A} \odot \boldsymbol{F} \in \mathcal{OP}$ and $\boldsymbol{I} \odot \boldsymbol{F} \in \mathcal{OP}$.
			\item If $\boldsymbol{F}_1,\boldsymbol{F}_2 \in \mathcal{OP}$ then $\boldsymbol{F}_1 \cdot \boldsymbol{F}_2 \in \mathcal{OP}$.
		\end{enumerate}
	\end{lemma}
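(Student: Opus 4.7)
The plan is to verify each of the three closure properties by analysing the expansion of the resulting bilabelled graph and invoking standard closure properties of outerplanar graphs, namely minor-closure and closure under $1$-sums.

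Property (1) is immediate: the expansion of $\boldsymbol{F}^{*} = (F, v, u)$ adds a length-two path between $v$ and $u$ and is therefore literally the same graph as the expansion of $\boldsymbol{F}$. For the $\boldsymbol{I} \odot \boldsymbol{F}$ part of Property (2), the underlying graph of $\boldsymbol{I} \odot \boldsymbol{F}$ is $F/(u = v)$ with both labels at the merged vertex, so its expansion is $F/(u = v)$ with a pendant vertex attached. The quotient $F/(u = v)$ is obtained from the outerplanar expansion of $\boldsymbol{F}$ by contracting both edges $ux$ and $xv$, a minor operation, and attaching a pendant preserves outerplanarity. For the $\boldsymbol{A} \odot \boldsymbol{F}$ part, the expansion either coincides with the expansion of $\boldsymbol{F}$ (when $uv \in E(F)$) or equals it plus the extra edge $uv$. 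In the latter case I would start from an outerplanar drawing of the expansion of $\boldsymbol{F}$ and draw $uv$ parallel to the length-two path $u$-$x$-$v$ on the opposite side of $x$; because $x$ has degree two with neighbours $u, v$ and therefore lies on the outer face, this creates a new triangular bounded face on $\{u,x,v\}$ without disturbing any other incidences and so preserves outerplanarity.

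Property (3) is the main obstacle. Setting $w = v_1 = u_2$ and $F = F_1 \sqcup F_2 / (v_1 = u_2)$, the expansion of $\boldsymbol{F}_1 \cdot \boldsymbol{F}_2$ equals $F + \{u_1 x, x v_2\}$ for a fresh vertex $x$. I plan to construct an outerplanar drawing of this expansion directly, starting from outerplanar drawings of the expansions of $\boldsymbol{F}_1$ and $\boldsymbol{F}_2$. The degree-two vertices $x_1$ and $x_2$ guarantee, after their removal, outerplanar embeddings of $F_1$ and $F_2$ in which $u_1, w$ (respectively $w, v_2$) are consecutive on the outer-face boundary with no other vertex between them on one arc. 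Gluing these drawings at $w$ so that the two empty arcs lie on the same side produces an outerplanar drawing of $F$ in which $u_1$ and $v_2$ are accessible from a common region of the outer face; the new vertex $x$ can then be inserted there and joined to $u_1$ and $v_2$ without crossings, yielding the desired outerplanar embedding of the expansion of $\boldsymbol{F}_1 \cdot \boldsymbol{F}_2$. The subtlest step will be handling that $w$ is a cut vertex appearing twice on the outer-face boundary walk of the glued drawing, which precludes a naive empty-arc argument from going through uniformly; a purely minor-theoretic approach also seems obstructed, since contracting the natural length-four path in the $1$-sum of the two expansions collapses $x_1, w, x_2$ into a single vertex rather than producing the separate vertex $x$ required by the expansion of the product, so the drawing-based construction appears to be the cleanest route.
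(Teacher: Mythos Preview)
Your approach is correct and, for Property~3 especially, genuinely different from the paper's. For Property~1 and the $\boldsymbol{I}\odot\boldsymbol{F}$ half of Property~2 you argue essentially as the paper does. For the $\boldsymbol{A}\odot\boldsymbol{F}$ half, the paper instead uses the forbidden-minor characterisation: after observing that both the underlying graph of $\boldsymbol{A}\odot\boldsymbol{F}$ and the expansion $F'$ of $\boldsymbol{F}$ are outerplanar, it rules out $K_4$- and $K_{2,3}$-minors in the expansion of $\boldsymbol{A}\odot\boldsymbol{F}$ directly, exploiting that the added vertex has degree two. Your embedding argument is sound, but you should treat the degenerate case where both faces incident to $x$ are the unbounded one, and the case $u=v$, which the paper handles separately.

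For Property~3, the paper does not build an embedding at all. It writes the expansion of $\boldsymbol{F}_1\cdot\boldsymbol{F}_2$ as $\soe(\boldsymbol{P}\odot(\boldsymbol{F}_1\cdot\boldsymbol{F}_2))$ with $\boldsymbol{P}=\boldsymbol{A}\cdot\boldsymbol{A}$ and argues by contradiction: any $K_4$- or $K_{2,3}$-minor would, via the bilabelled-minor machinery developed in the appendix (the Minor Labelling, Parallel Composition, and Series Composition Lemmas), decompose as $\boldsymbol{K}_1\odot\boldsymbol{K}_2$ with $\boldsymbol{K}_1$ one of the six bilabelled minors of $\boldsymbol{P}$; a case analysis on these six possibilities then pushes a forbidden minor into the expansion of $\boldsymbol{F}_1$ or $\boldsymbol{F}_2$. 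Your constructive embedding route is more elementary --- it needs none of that apparatus --- but the price is the geometric case-work around the cut vertex $w$ that you rightly flag. Your remark that a minor-theoretic approach is ``obstructed'' is therefore not quite accurate: one exists, just contrapositively, tracing a hypothetical forbidden minor backwards rather than exhibiting the target expansion as a minor of something known to be outerplanar.
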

	\begin{proof}
		The first claim is purely syntactical. The underlying unlabelled graphs of $\boldsymbol{F}$ and $\boldsymbol{F}^*$ are isomorphic and so are their expansions. Thus, $\boldsymbol{F}^* \in \mathcal{OP}$ if $\boldsymbol{F} \in \mathcal{OP}$.

        For the second claim, first consider the case when the labels of $\boldsymbol{F}$ coincide. 
        Then $\boldsymbol{I} \odot \boldsymbol{F} = \boldsymbol{F}$ and $\boldsymbol{A} \odot \boldsymbol{F}$ differs from $\boldsymbol{F}$ only in the loop at the labelled vertex. 
        Hence, $\boldsymbol{A} \odot \boldsymbol{F}$ is in $\mathcal{OP}$. 
        Now consider the case when the labelled vertices of $\boldsymbol{F}$ are distinct.
        Write $F$ for the unlabelled graph underlying $\boldsymbol{F}$ and $F'$ for the expansion of $\boldsymbol{F}$.
		It can be easily seen that the graphs underlying $\boldsymbol{A} \odot \boldsymbol{F}$ and $\boldsymbol{I} \odot \boldsymbol{F}$ are minors of $F'$ and thus outerplanar.
		The expansion of $\boldsymbol{I} \odot \boldsymbol{F}$ is a minor of the expansion of $\boldsymbol{A} \odot \boldsymbol{F}$. 
        Thus, it suffices to argue that the expansion of $\boldsymbol{A} \odot \boldsymbol{F}$ is outerplanar. 

        Write $K$ for the unlabelled graph underlying $\boldsymbol{A} \odot \boldsymbol{F}$ and $K'$ for the expansion of $\boldsymbol{A} \odot \boldsymbol{F}$.
        Since $K$ and $F'$ are outerplanar, any $K_4$-minor of $K'$ can be obtained from $K'$ without contracting the triangle induced by the labelled vertices of $\boldsymbol{A} \odot \boldsymbol{F}$ and the vertex added by expansion. This cannot be since the latter vertex is of degree two. 
        By the same argument, since $K_{2,3}$ is triangle-free, the graph $K'$ does not contain any $K_{2,3}$-minor either.
		
		For the third claim, let $F$ denote the graph underlying $\boldsymbol{F}_1 \cdot \boldsymbol{F}_2$. Let $y$ denote the vertex at which $\boldsymbol{F}_1$ and $\boldsymbol{F}_2$ are glued together and write $x, z$ for the vertices labelled in $\boldsymbol{F}_1 \cdot \boldsymbol{F}_2$. The graph $F - y$ is disconnected. Hence, if $K_4$ or $K_{2,3}$ is a minor of $F$ then $\boldsymbol{F}_1$ or $\boldsymbol{F}_2$ are not outerplanar. Hence, $F$ is outerplanar.

        \begin{figure}
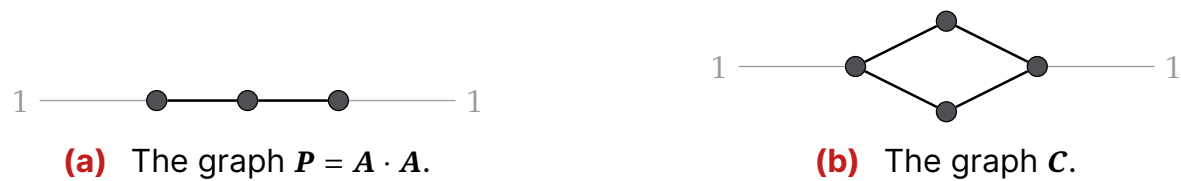

            \centering
            \begin{subfigure}{.45\linewidth}
                \centering
                \includegraphics[page=8,scale=1.2]{figure-basal.pdf}
                \caption{The graph $\boldsymbol{P} = \boldsymbol{A} \cdot \boldsymbol{A}$.}
                \label{fig:proof:closure1}
            \end{subfigure}
            \begin{subfigure}{.45\linewidth}
                \centering
                \includegraphics[page=9,scale=1.2]{figure-basal.pdf}
                \caption{The graph $\boldsymbol{C}$.}
                \label{fig:proof:closure2}
            \end{subfigure}
            \caption{Bilabelled graphs from the proof of \cref{lem:closure}.}
            \label{fig:proof:closure}
        \end{figure}

        Write $F'$ for the expansion of $\boldsymbol{F} \coloneqq \boldsymbol{F}_1 \cdot \boldsymbol{F}_2$. In symbols, $F' = \soe(\boldsymbol{P} \odot \boldsymbol{F})$ where $\boldsymbol{P}$ is the bilabelled graph in \cref{fig:proof:closure1}.
        For $K \in \{K_4, K_{2,3} \}$, observe the following:
        If $F'$ contains $K$ as a minor then, by \cref{lem:mll}, there exists a bilabelled minor $\boldsymbol{K} \leq \boldsymbol{P} \odot \boldsymbol{F}$ such that $\soe(\boldsymbol{K})$ is the disjoint union of $K$ and potential isolated vertices which are labelled in $\boldsymbol{K}$. By \cref{lem:mppl}, $\boldsymbol{K}$ can be written as $\boldsymbol{K} = \boldsymbol{K}_1 \odot \boldsymbol{K}_2$ such that $\boldsymbol{K}_1 \leq \boldsymbol{P}$ and $\boldsymbol{K}_2 \leq \boldsymbol{F}$. The graph $\boldsymbol{P}$ has six bilabelled minors. Distinguish cases:
        \begin{enumerate}
            \item If $\boldsymbol{K}_1 = \boldsymbol{P}$ then $K = K_{2,3}$. The labels of $\boldsymbol{K}_2$ must lie on distinct vertices because $\boldsymbol{K}$ does not contain any vertices of degree one. Furthermore, the labelled vertices in $\boldsymbol{K}$ must be connected via a path of length two with an intermediate vertex of degree two. Hence, $\boldsymbol{K}_2 = \boldsymbol{C}$ where $\boldsymbol{C}$ is the graph in \cref{fig:proof:closure2}.\label{case:c1}
            \item If $\boldsymbol{K}_1 = \boldsymbol{A}$ then the labels of $\boldsymbol{K}_2$ must lie on distinct vertices because $\boldsymbol{K}$ does not contain any loops. Furthermore, the labelled vertices in $\boldsymbol{K}$ must be adjacent. Hence, $\boldsymbol{K}_2$ is a graph obtained from $K_4$ or $K_{2,3}$ by labelling two adjacent vertices and potentially removing the edge between them. In any case, $\boldsymbol{C} \leq \boldsymbol{K}_2$.\label{case:c2}
            \item If $\boldsymbol{K}_1 = \boldsymbol{I}$ then $\boldsymbol{K}_2$ is obtained from $K_4$ or $K_{2,3}$ by either picking one vertex and placing both labels on it or by adding a fresh vertex, placing a label on it, and connecting it to a subset of the neighbours of a chosen original vertex, which receives the other label.\label{case:identity}
            \item If $\boldsymbol{K}_1 = \boldsymbol{J}$ then $\boldsymbol{K}_2 = \boldsymbol{K}$. In particular, $\boldsymbol{K}_2$ is obtained from $K_4$ or $K_{2,3}$ by the procedure described in \cref{case:identity}.\label{case:disjoint}
            \item $\boldsymbol{K}_1$ cannot be any of the two remaining bilabelled minors of $\boldsymbol{P}$ since these contain an unlabelled vertex of degree at most one which is not the case for $\boldsymbol{K}$.
        \end{enumerate}
        For \cref{case:c1,case:c2} when $\boldsymbol{C} \leq \boldsymbol{F} = \boldsymbol{F}_1 \cdot \boldsymbol{F}_2$, then, by \cref{lem:mpl}, $\boldsymbol{C} \leq \boldsymbol{F}_1$ or $\boldsymbol{C} \leq \boldsymbol{F}_2$ because the graph $\boldsymbol{C}$ cannot be written as the series composition of two graphs different from $\boldsymbol{I}$. The bilabelled minor $\boldsymbol{C}$ of $\boldsymbol{F}_1$ or $\boldsymbol{F}_2$ gives rise to a $K_{2,3}$-minor in their expansion, contradicting that $\boldsymbol{F}_1, \boldsymbol{F}_2 \in \mathcal{OP}$.

        For \cref{case:identity,case:disjoint}, let $\boldsymbol{K}_2$ be the graph described there. 
        This graph can only be written as the series composition of two graphs different from $\boldsymbol{I}$ if the two labels do not coincide. In this case, one of the labelled vertices is adjacent to a subset of neighbours of the other labelled vertex. The graph $\boldsymbol{K}_2$ may be written as series composition of $\boldsymbol{A}$ or $\boldsymbol{J}$ with another graph $\boldsymbol{K}'_2$. The graph $\soe(\boldsymbol{K}'_2)$ contains $K_4$ or $K_{2,3}$ as a minor. By \cref{lem:mul}, one of the factors $\boldsymbol{F}_1$ or $\boldsymbol{F}_2$ is not outerplanar, a contradiction.
	\end{proof}

    We proceed to prove the following auxiliary lemma.
    For a vertex $u$ of a graph $F$, 
    write $N_F(u) \coloneqq \{v \in V(F) \mid uv \in E(F) \}$ for the set of neighbours of~$u$.
    
	\begin{lemma} \label{lem:op-helper}
		Let $F$ be an outerplanar graph with vertex $u \in V(F)$. If $u$ is not isolated then there exists a neighbour $v \in N_F(u)$ such that the graph obtained from $F$ by subdividing the edge $uv$ is outerplanar.
	\end{lemma}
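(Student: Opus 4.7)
The plan is to exploit the embedding characterisation of outerplanarity rather than the forbidden-minor one. Fix an outerplanar embedding of $F$, i.e., a planar drawing in which every vertex lies on the boundary of the outer face. The strategy is to choose $v$ as any neighbour of $u$ such that the edge $uv$ borders the outer face in this embedding, and then subdivide $uv$ by inserting the new vertex along the arc representing $uv$, keeping it on the outer face.

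To justify the choice of $v$, I would use that $u$ lies on the outer face and is not isolated, so the cyclic ordering of edges incident to $u$ in the embedding is nonempty and the outer face contributes at least one angular sector at $u$. The two edges bounding this sector (which coincide into a single edge traversed from both sides if $u$ has degree one) are incident to the outer face at $u$; letting $v$ be the endpoint other than $u$ of such an edge yields an outer neighbour.

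To conclude, I would verify that subdividing $uv$ preserves outerplanarity. In the given embedding, the edge $uv$ is drawn as an arc whose interior borders the outer face. Placing the new vertex $w$ at an interior point of this arc yields a planar embedding of the subdivided graph in which $w$ lies on the outer face boundary, while all original vertices remain on it by hypothesis. Hence the subdivision admits an outerplanar embedding, as required.

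The main delicate point — and the reason the lemma is not entirely trivial — is that a \emph{naive} subdivision argument fails: subdividing a chord of an outerplanar graph can create a $K_{2,3}$-minor (e.g.\@ subdividing the chord $ab$ in $K_4 - cd$ yields $K_{2,3}$). Thus the nontrivial content of the lemma is precisely that one must pick $v$ so that $uv$ is an outer edge, and the obstacle is making sure such an outer edge at $u$ always exists. This is what the angular-sector observation in the second paragraph provides; degenerate cases ($u$ of degree one, or $F$ disconnected, in which one applies the argument to the component of $u$) all fit the same placement argument without modification.
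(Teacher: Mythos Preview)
Your argument is correct and is essentially the paper's primary proof: fix an outerplanar embedding, pick an edge at $u$ incident to the outer face, and subdivide it there. The paper states this in one line and then offers a second, forbidden-minor argument (via the structure of $F[N_F(u)]$) as an alternative; your write-up fleshes out the embedding argument with the angular-sector justification for why such an outer edge at $u$ exists, which the paper leaves implicit.
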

	\begin{proof}
        Take an outerplanar embedding of $F$ which has some face incident to all the vertices, consider some edge incident to $u$ that is incident to this face, and subdivide that edge. 
        Since the vertex created by subdivision is incident to the outer face,
	       the embedding remains outerplanar when the edge is subdivided.
        Alternatively, one may consider the following argument:
 
		If $u$ is of degree one or two, then any of its neighbours is as desired.
		If $u$ has degree at least three, observe that $F[N_F(u)]$ cannot contain $K_3$ as a minor. Indeed, any such minor would give rise to a $K_4$-minor in $F$. Hence, $F[N_F(u)]$ is a forest and contains a vertex $v$ of degree at most one in $F[N_F(u)]$. Write $F'$ for the graph obtained from $F$ by subdividing the edge $uv$. Write $w$ for the vertex added this way.

        If $F'$ is not outerplanar then it contains a minor $K_4$ or $K_{2,3}$ which can be obtained from $F'$ without undoing the subdivision. This minor cannot be $K_4$ because $w$ is of degree two. Hence, $F'$ contains a $K_{2,3}$-minor which can be obtained from $F$ such that the path $uwv$ is not contracted. This implies that $v$ is adjacent to at least two neighbours of $u$ which cannot be since it was chosen to be of degree one in $F[N_F(u)]$, a contraction. The graph $F'$ is outerplanar.
	\end{proof}

    \Cref{lem:op-helper} facilitates decomposing bilabelled outerplanar graphs into simpler ones.

	\begin{lemma} \label{lem:generation}
		Let $\boldsymbol{F} = (F, u, v) \in \mathcal{OP}$ have $n \geq 3$ vertices.
		\begin{enumerate}
\item If $u = v$ then $\boldsymbol{F} = \boldsymbol{I} \odot (\boldsymbol{K} \cdot \boldsymbol{J})$ 
			or $\boldsymbol{F} = \boldsymbol{I} \odot ((\boldsymbol{A} \odot \boldsymbol{K}) \cdot \boldsymbol{J})$
			where $\boldsymbol{K} = (K, x, y) \in \mathcal{OP}$ has at most $n$ vertices and $x \neq y$.\label{gen2}
			\item If $uv \in E(F)$ then $\boldsymbol{F} = \boldsymbol{A} \odot \boldsymbol{K}$ where $\boldsymbol{K} = (K, x, y)\in \mathcal{OP}$ has at most $n$ vertices and satisfies $x \neq y$ and $xy \not\in E(K)$,\label{gen3}
			\item If $u \neq v$ and $uv \not\in E(F)$ then $\boldsymbol{F} = \boldsymbol{K} \cdot \boldsymbol{L}$ where $\boldsymbol{K},\boldsymbol{L}\in \mathcal{OP}$ have at least $2$ and at most $n-1$ vertices.\label{gen4}
		\end{enumerate}
	\end{lemma}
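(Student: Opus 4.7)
My plan is to decipher each of the three decompositions combinatorially in terms of $F$ and then verify the resulting graph-theoretic statement. Direct calculation with the bilabelled operations shows that $\boldsymbol{I} \odot (\boldsymbol{K} \cdot \boldsymbol{J})$ has the same underlying graph as $\boldsymbol{K}$, with both labels placed on the in-labelled vertex and the out-label forgotten, so the first form in case~1 requires a vertex $y \neq u$ with $(F, u, y) \in \mathcal{OP}$; since $\boldsymbol{A} \odot \boldsymbol{K}$ adjoins the edge $xy$ to $\boldsymbol{K}$, the second form instead demands $(F - uy, u, y) \in \mathcal{OP}$. Analogously, case~2 reduces to showing $(F - uv, u, v) \in \mathcal{OP}$, which is immediate since its expansion is a spanning subgraph of the expansion of $\boldsymbol{F}$, hence outerplanar. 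Case~3 reduces to finding a vertex $z \in V(F) \setminus \{u, v\}$ separating $u$ from $v$ in $F$, yielding $\boldsymbol{K} \coloneqq (F[V_1 \cup \{z\}], u, z)$ and $\boldsymbol{L} \coloneqq (F[V_2 \cup \{z\}], z, v)$, where $V_1$ is the component of $F - z$ containing $u$ and $V_2$ is its complement.

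For the existence of the separating vertex in case~3, I argue by contradiction: if $F$ is connected and $u, v$ share a $2$-connected block, Menger's theorem supplies two internally vertex-disjoint paths between $u$ and $v$ of length at least two (using $uv \notin E(F)$); together with the path $u - w - v$ added in the expansion, this exhibits three internally disjoint paths of length $\geq 2$, which contract to a $K_{2,3}$ minor of the expansion, contradicting $\boldsymbol{F} \in \mathcal{OP}$. The degenerate situation where $u$ and $v$ lie in different components of $F$ is handled by picking $z$ away from either component. By construction $\boldsymbol{K} \cdot \boldsymbol{L} = \boldsymbol{F}$ and each piece has between $2$ and $n - 1$ vertices.

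The main obstacle is establishing $\boldsymbol{K}, \boldsymbol{L} \in \mathcal{OP}$ in case~3. I plan to exhibit the expansion $F_1 + uw + wz$ of $\boldsymbol{K}$ as a minor of the expansion $F + uw + wv$ of $\boldsymbol{F}$: contract a path from $v$ to $z$ inside $F[V_2 \cup \{z\}]$ (which exists because $z$ meets every component of $F - z$ when $F$ is connected), thereby converting the edge $wv$ into $wz$, and then delete the remaining vertices of $V_2$; the separation property of $z$ prevents any spurious edges from $V_2$ to $V_1$ from appearing. The analogous minor argument yields $\boldsymbol{L} \in \mathcal{OP}$. Finally, for case~1, if $u$ is isolated in $F$ then any $y \neq u$ works in the first form because attaching a length-two pendant path preserves outerplanarity; otherwise \cref{lem:op-helper} furnishes a neighbour $v$ of $u$ such that subdividing $uv$ in $F$ yields an outerplanar graph, which coincides with the expansion of $(F - uv, u, v)$, so the second form presents $\boldsymbol{F}$ as $\boldsymbol{I} \odot ((\boldsymbol{A} \odot \boldsymbol{K}) \cdot \boldsymbol{J})$ by reinserting the edge $uv$.
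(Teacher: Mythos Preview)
Your argument tracks the paper's almost exactly for items~\ref{gen2} and~\ref{gen3}, and for the connected subcase of item~\ref{gen4} (where your minor argument for the expansions is in fact more explicit than the paper's one-line claim). The one genuine gap is the disconnected subcase of item~\ref{gen4}.

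When $u$ and $v$ lie in different components of $F$, your instruction to ``pick $z$ away from either component'' can fail outright: if $F$ has exactly two components, one containing $u$ and one containing $v$, there is no such $z$. Even when a third component exists and such a $z$ can be chosen, you have not verified that $\boldsymbol{K},\boldsymbol{L}\in\mathcal{OP}$ in this situation; your minor argument explicitly relies on a path from $v$ to $z$ (respectively from $u$ to $z$) inside $F$, and neither path exists when $z$ sits in a component disjoint from both. If instead you pick $z$ inside, say, the component $C_v$ of $v$, then $\boldsymbol{L}=(C_v,z,v)$ need not lie in $\mathcal{OP}$ for an arbitrary choice of $z$: take $C_v$ a long cycle with $z$ antipodal to $v$, so that adding the expansion path $z\text{--}w\text{--}v$ creates a $K_{2,3}$ subdivision.

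The paper sidesteps this by abandoning the cut-vertex picture in the disconnected case. Writing $A,B,C$ for the component of $u$, the component of $v$, and the remaining components, it sets $\boldsymbol{K}=(F[A\cup C],u,u)$ and $\boldsymbol{L}=\boldsymbol{J}\cdot(F[B],v,v)$ (or the symmetric variant, chosen so that both factors have at least two vertices). Multiplying by $\boldsymbol{J}$ manufactures a fresh isolated vertex to serve as the gluing point, and because each factor then has coincident labels, membership in $\mathcal{OP}$ reduces to outerplanarity of the underlying induced subgraph, which is immediate. You could alternatively repair your approach by choosing $z\in C_v$ via \cref{lem:op-helper} and invoking \cref{lem:closure} to obtain $(C_v,z,v)\in\mathcal{OP}$, but that requires a separate argument you have not supplied.
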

	\begin{proof}
		For \cref{gen2}, distinguishing two cases.
        \begin{itemize}
            \item If $u = v$ is isolated in $F$ then let $x \in V(F) \setminus \{u\}$ be arbitrary. Define $K \coloneqq F$. Then $\boldsymbol{K} \coloneqq (K, u, x)$ is such that $\boldsymbol{F} = \boldsymbol{I} \odot (\boldsymbol{K} \cdot \boldsymbol{J})$.
		      By definition, $K$ is outerplanar. 
            Since $u$ is isolated, the expansion of $\boldsymbol{K}$ differs from $\boldsymbol{K}$ only in the loop at $u$. Hence, $\boldsymbol{K}$ is outerplanar as well.
            
            \item If $u = v$ is not isolated, pick a neighbour $x$ in virtue of \cref{lem:op-helper}, and let $K$ be the graph obtained from $F$ by deleting the edge $ux$, i.e.\  $V(K) \coloneqq V(F)$ and $E(K) \coloneqq E(F) \setminus \{ux\}$. Let $\boldsymbol{K} \coloneqq (K, u, x)$. 
        As a subgraph of $F$, $K$ is outerplanar. The expansion of $\boldsymbol{K}$ is the graph obtained from $F$ by subdividing the edge $ux$ and outerplanar by \cref{lem:op-helper}. Hence, $\boldsymbol{K} \in \mathcal{OP}$. Furthermore, $\boldsymbol{F} = \boldsymbol{I} \odot ((\boldsymbol{A} \odot \boldsymbol{K}) \cdot \boldsymbol{J})$.
        \end{itemize}

		For \cref{gen3}, define $K$ by removing the edge $uv$ from $F$, i.e.\  $V(K) \coloneqq V(F)$ and $E(K) \coloneqq E(F) \setminus \{uv\}$. The graph $\boldsymbol{K} \coloneqq (K, u, v)$ satisfies $\boldsymbol{F} = \boldsymbol{A} \odot \boldsymbol{K}$ and all other stipulated properties.
		
		For \cref{gen4}, first suppose that $u$ and $v$ lie in the same connected component of $F$. 
Observe that there no two internally vertex-disjoint paths from $u$ to $v$ since a pair of two such paths would give rise to a $K_{2,3}$-minor in the expansion of $\boldsymbol{F}$.
        By Menger's Theorem, there exists a vertex $x \neq u, v$ meeting all paths from $u$ to $v$.
		Thus, removing $x$ from $F$ causes $u$ and $v$ to lie in separate connected components. 
        Let $A$ denote the connected component of $F - x$ containing $u$, $B$ the connected component of $F-x$ containing $v$, and $C$ the union of all connected components of $F-x$ containing neither $u$ nor $v$. 
        By definition, $V(F) = A \sqcup B \sqcup C \sqcup \{x\}$. 
        Define $K \coloneqq F[A \cup \{x\}]$ as the subgraph of $F$ induced by $A \cup \{x\}$ and similarly $L \coloneqq F[B \cup C \cup \{x\}]$. 
        Let $\boldsymbol{K} \coloneqq (K, u, x)$ and $\boldsymbol{L}  \coloneqq (L, x, v)$. Then $\boldsymbol{F} = \boldsymbol{K} \cdot \boldsymbol{L}$, as desired.
		As they are induced subgraphs of $F$, the graphs $K$ and $L$ are outerplanar. The expansions of $\boldsymbol{K}$ and $\boldsymbol{L}$ are minors of the expansion of $\boldsymbol{F}$ and thus outerplanar.
		Observe that $|V(K)| + |V(L)| = n+1$ and $|V(K)|, |V(L)| \geq 2$, as desired.

		Now suppose that $u$ and $v$ lie in separate connected components of $F$. 
        Let $A$ denote the connected component of $F$ containing $u$, $B$ the connected component of $F$ containing~$v$, and $C$ the union of all connected components of $F$ containing neither $u$ nor $v$. 
        Observe that $|A| + |B| + |C| = n \geq 3$.
        Distinguish cases:
        \begin{itemize}
            \item If $|A| + |C| \geq 2$, 
            let $K \coloneqq F[A \cup C]$ and $L' \coloneqq F[B]$.
            Define $\boldsymbol{K} \coloneqq (K, u, u)$, $\boldsymbol{L}' \coloneqq (L', v, v)$, and  $\boldsymbol{L} \coloneqq \boldsymbol{J} \cdot \boldsymbol{L}'$.
            \item Otherwise, it holds that $|B| \geq 2$.
            Let $K' \coloneqq F[A \cup B]$ and $L \coloneqq F[B]$.
            Define $\boldsymbol{K}' \coloneqq (K', u, u)$, $\boldsymbol{L} \coloneqq (L, v, v)$, and $\boldsymbol{K} \coloneqq \boldsymbol{K}' \cdot \boldsymbol{J}$.
        \end{itemize}
        In both cases, $\boldsymbol{F} = \boldsymbol{K} \cdot \boldsymbol{L}$ and $\boldsymbol{K}, \boldsymbol{L} \in \mathcal{OP}$.
        Furthermore, writing $K$ and $L$ for the graphs underlying $\boldsymbol{K}$ and $\boldsymbol{L}$ respectively, 
        it holds that $|V(K)| + |V(L)| = n+1$ and $|V(K)|, |V(L)| \geq 2$ since multiplication with $\boldsymbol{J}$ amounts to adding a fresh isolated vertex.
	\end{proof}
	
	The following \cref{cor:op} implies \cref{lem:op}.
	
	\begin{theorem} \label{cor:op}
		The classes $\mathcal{L}_1$ and $\mathcal{OP}$ coincide.
	\end{theorem}
	\begin{proof}
		For the inclusion $\mathcal{L}_1 \subseteq \mathcal{OP}$, observe that the atomic graphs in $\mathcal{A}_1$ are $\boldsymbol{A}, \boldsymbol{J}, \boldsymbol{I} \in \mathcal{OP}$, cf.\  \cref{fig:atomic-t1}.
		By \cref{lem:closure}, $\mathcal{OP}$ is closed under series composition, parallel composition with atomic graphs, and permutation of labels. It follows inductively that $\mathcal{L}_1 \subseteq \mathcal{OP}$.
		
		For the inclusion $\mathcal{L}_1 \supseteq \mathcal{OP}$, it is argued that $\boldsymbol{F} \in \mathcal{L}_1$ if $\boldsymbol{F} \in \mathcal{OP}$ by induction on the number of vertices in $\boldsymbol{F}$.
		If $\boldsymbol{F}$ has at most two vertices, this is clear. Suppose $\boldsymbol{F} = (F, u, v)$ has $n \geq 3$ vertices. By \cref{gen2,gen3} of \cref{lem:generation} and the closure properties of $\mathcal{L}_1$ from \cref{def:l-lplus}, it may be supposed that $u \neq v$ and $uv \not\in E(F)$. In this case, again by \cref{lem:generation}, $\boldsymbol{F} = \boldsymbol{K} \cdot \boldsymbol{L}$ for graphs $\boldsymbol{K}$ and $\boldsymbol{L}$, to which the inductive hypothesis applies. 
        It follows that $\boldsymbol{F} \in \mathcal{L}_1$.
	\end{proof}

	\section{Deciding Exact Feasibility of the Lasserre Relaxation with Non-Negativity Constraints in Polynomial Time}
    \label{sec:polytime}

    This section is dedicated to proving \cref{thm:main5}.
    To that end, it is argued that $\simeq_t^{\textup{L}^+}$ has equivalent characterisations in terms of a counting logic and a colouring algorithm akin to the Weisfeiler--Leman algorithm~\cite{weisfeiler_construction_1976}.
    This algorithm has polynomial running time. It is defined as follows:
	
	\begin{definition} \label{def:alg-mwl}
		Let $t \geq 1$. For a graph $G$, an integer $i \geq 1$, and $\boldsymbol{r}, \boldsymbol{s} \in V(G)^t$, define
		\begin{align*}
			\mathsf{mwl}_G^0(\boldsymbol{r}\boldsymbol{s}) &\coloneqq \rel_G(\boldsymbol{r}\boldsymbol{s}), \\
			\mathsf{mwl}_G^{i - 1/2}(\boldsymbol{r}\boldsymbol{s}) &\coloneqq \left( \mathsf{mwl}_G^{i -1}(\sigma(\boldsymbol{r}\boldsymbol{s})) \ \middle| \ \sigma \in \mathfrak{S}_{2t} \right),\\
			\mathsf{mwl}_G^{i}(\boldsymbol{r}\boldsymbol{s}) &\coloneqq \left( \mathsf{mwl}_G^{i - 1 / 2}(\boldsymbol{r}\boldsymbol{s}), \multiset{ 
				\left( \mathsf{mwl}_G^{i - 1 / 2}(\boldsymbol{r}\boldsymbol{t}), \mathsf{mwl}_G^{i - 1 / 2}(\boldsymbol{t}\boldsymbol{s}) \right) \ \middle|\ \boldsymbol{t} \in V(G)^t 
			} \right).
		\end{align*}
		The $\mathsf{mwl}_G^{i}$ for $i \in \mathbb{N}$ define increasingly fine colourings of $V(G)^{2t}$.
		Let $\mathsf{mwl}_G^{\infty}$ denote the finest such colouring.
		Two graphs $G$ and $H$ are not \emph{distinguished by the $t$-dimensional $\mathsf{mwl}$ algorithm} if the multisets 
		\[
			\multiset{\mathsf{mwl}_G^{\infty}(\boldsymbol{r}\boldsymbol{s}) \ \middle|\ \boldsymbol{r}, \boldsymbol{s} \in V(G)^t} \quad \text{ and } \quad
			\multiset{\mathsf{mwl}_H^{\infty}(\boldsymbol{u}\boldsymbol{v}) \ \middle|\ \boldsymbol{u}, \boldsymbol{v} \in V(H)^t}
		\]
		are the same.
	\end{definition}

	Since the finest colouring $\mathsf{mwl}_G^{\infty}$ is reached in $\leq n^{2t} - 1$ iterations for graphs on $n$~vertices, for fixed $t$, it can be tested in polynomial time whether two graphs are not distinguished by the $t$-dimensional $\mathsf{mwl}$ algorithm. We are about to show that the latter happens if and only if the level-$t$ Lasserre relaxation with non-negative constraints is feasible. As a by-product, we obtain a logical characterisation for this equivalence relation akin to \cite{cai_optimal_1992}.
    	
	\begin{definition} \label{def:logicmt}
		For $t \geq 1$, 
        an \emph{$\mathsf{M}^t$-formula} has $2t$ free variables and is of the following form:
        \begin{itemize}
            \item every quantifier-free $\mathsf{FO}$-formula with equality over the signature $\{E\}$ with $2t$ variables is an $\mathsf{M}^t$-formula,
            \item if $\phi,\psi$ are $\mathsf{M}^t$-formulae with the same free variables then $\neg \phi$, $\phi \land \psi$, and $\phi \lor \psi$ are $\mathsf{M}^t$-formulae,
            \item if $\phi, \psi$ are $\mathsf{M}^t$-formulae and $n \in \mathbb{N}$ then $\exists^{\geq n} \boldsymbol{y}.\left( \phi(\boldsymbol{x},\boldsymbol{y}) \land \psi(\boldsymbol{y},\boldsymbol{z}) \right)$ is an $\mathsf{M}^t$-formula. Here, the boldface letters $\boldsymbol{x}$, $\boldsymbol{y}$, $\boldsymbol{z}$ denote pairwise disjoint $t$-tuples of distinct variables.
        \end{itemize}
        An \emph{$\mathsf{M}^t$-sentence} is an expression $\exists^{\geq n} \boldsymbol{x}.\ \phi(\boldsymbol{x})$ where $\phi$ is an $\mathsf{M}^t$-formula, $\boldsymbol{x}$ is a tuple of $2t$ distinct variables, and $n \in \mathbb{N}$.
	\end{definition}

    The semantics of the quantifier $\exists^{\geq n} \boldsymbol{y}.\ \phi(\boldsymbol{y})$ is that there exist at least $n$ many $\lvert \boldsymbol{y} \rvert$-tuples of vertices from the graph over which the formula is evaluated which satisfy $\phi$.
    The following \cref{thm:l+logic-global} may be thought of as a analogue of \cref{thm:sa} for $\mathcal{L}_t^+$.

	\begin{theorem} \label{thm:l+logic-global}
		Let $t \geq 1$.
		For graphs $G$ and $H$, the following are equivalent:
		\begin{enumerate}
			\item $G$ and $H$ are not distinguished by the $t$-dimensional $\mathsf{mwl}$ algorithm,\label{cor:l+logic1}
			\item $G$ and $H$ are homomorphism indistinguishable over $\mathcal{L}_t^+$,\label{cor:l+logic2}
			\item $G$ and $H$ satisfy the same $\mathsf{M}^t$-sentences.\label{cor:l+logic3}
		\end{enumerate}
	\end{theorem}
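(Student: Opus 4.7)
My plan is to prove the cycle of implications $\ref{cor:l+logic1} \Rightarrow \ref{cor:l+logic2} \Rightarrow \ref{cor:l+logic3} \Rightarrow \ref{cor:l+logic1}$, using the $t$-adjacency algebra $\mathcal{A}_G^t$ from \Cref{def:ce} as the pivot connecting the combinatorial, algebraic, and logical characterisations.

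The core technical step is to identify, for every graph $G$, the stable colouring $\mathsf{mwl}_G^{\infty}$ with the orbit partition of $V(G)^{2t}$ induced by $\mathcal{A}_G^t$. Indeed, the matrix whose $(\boldsymbol{r},\boldsymbol{s})$-entry is the atomic type $\rel_G(\boldsymbol{r}\boldsymbol{s})$ determines the homomorphism tensors of all atomic $\boldsymbol{A} \in \mathcal{A}_t$ via Schur products; the refinement step $\mathsf{mwl}_G^{i-1/2}$ encodes the $\mathfrak{S}_{2t}$-action; and the aggregation over $\boldsymbol{t} \in V(G)^t$ in the definition of $\mathsf{mwl}_G^i$ is exactly matrix multiplication. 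Hence the partition induced by $\mathsf{mwl}_G^\infty$ is the coarsest partition whose indicator matrices are closed under $*$-algebra operations, Schur products with atomic tensors, and the $\mathfrak{S}_{2t}$-action; by \Cref{def:ce} this is precisely the orbit partition of $\mathcal{A}_G^t$. Once this is established, \ref{cor:l+logic1} $\Leftrightarrow$ \ref{cor:l+logic2} follows: a matching between the colour-class multisets of $G$ and $H$ extends canonically to a vector space isomorphism $\phi \colon \mathcal{A}_G^t \to \mathcal{A}_H^t$ satisfying all conditions of \Cref{def:ce}, yielding $t$-equivalence and hence, by \Cref{thm:hi-l-pos}, homomorphism indistinguishability over $\mathcal{L}_t^+$; the converse is obtained by observing that any $t$-equivalence preserves the sizes of colour classes, since $\phi(J)=J$ and $\phi$ carries indicator matrices to indicator matrices.

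For \ref{cor:l+logic3} $\Rightarrow$ \ref{cor:l+logic1}, I will show by induction on $i$ that every colour class of $\mathsf{mwl}_G^i$ on $V(G)^{2t}$ is definable by a formula $\phi_i(\boldsymbol{x},\boldsymbol{y}) \in \mathsf{M}^t$ with $2t$ free variables. The base case uses the atomic formulas $x_i = x_j$ and $Ex_i x_j$ to define $\rel_G$. The inductive step combines previously defined formulas via Boolean connectives to take $\mathfrak{S}_{2t}$-tuples, and implements the aggregation step of $\mathsf{mwl}$ via the counting quantifier $\exists^{\geq n} \boldsymbol{y}.\ \phi(\boldsymbol{x},\boldsymbol{y}) \land \psi(\boldsymbol{y},\boldsymbol{z})$, which is precisely matrix multiplication of $0/1$-matrices read entrywise; the resulting formulas use only $3t$ variables since $\boldsymbol{x}, \boldsymbol{y}, \boldsymbol{z}$ are disjoint $t$-tuples. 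Counting the tuples of each colour by sentences of $\mathsf{M}^t$ then transports $\mathsf{M}^t$-equivalence to $\mathsf{mwl}^t$-indistinguishability. The reverse direction \ref{cor:l+logic2} $\Rightarrow$ \ref{cor:l+logic3} proceeds by dual induction on the construction of $\mathsf{M}^t$-formulas: each formula defines an element of $\mathcal{A}_G^t$ (atomic formulas via atomic tensors, conjunction/negation via Schur products and complements that stay in $\mathcal{A}_G^t$ because the stable colour classes are already refined, quantification via matrix multiplication and thresholding), and the $t$-equivalence supplied by \Cref{thm:hi-l-pos} preserves all these operations, as well as sum-of-entries by \Cref{rem:num-vertices}.

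The main obstacle I anticipate is the identification of $\mathsf{mwl}_G^\infty$ with the orbit partition of $\mathcal{A}_G^t$, specifically verifying that the somewhat asymmetric ``half-step'' definition of $\mathsf{mwl}^{i-1/2}$ followed by the aggregation in $\mathsf{mwl}^i$ really produces closure under the full coherent-algebra operations of \Cref{def:ce} rather than some weaker closure. A related subtlety is that $\mathcal{A}_G^t$ is closed under Schur products with arbitrary elements (not merely atomic ones), whereas $\mathsf{mwl}$ only directly captures Schur products with atomic tensors via the initial colouring; the resolution is that once the colour partition is stable, the indicator matrices of the colour classes form a Schur-closed family, so Schur-closure with any element of $\mathcal{A}_G^t$ comes for free.
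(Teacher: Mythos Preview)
Your approach is sound and will work, but it takes a genuinely different route from the paper. The paper does not pivot through $\mathcal{A}_G^t$ or \Cref{thm:hi-l-pos} at all; instead it first proves a \emph{local}, tuple-level version (\Cref{thm:l+logic}): for fixed $\boldsymbol{r}\boldsymbol{s}\in V(G)^{2t}$ and $\boldsymbol{u}\boldsymbol{v}\in V(H)^{2t}$, the three conditions ``same $\mathsf{mwl}^\infty$-colour'', ``same $\boldsymbol{F}$-entry for all $\boldsymbol{F}\in\mathcal{L}_t^+$'', and ``same $\mathsf{M}^t$-type'' are equivalent. The implication from colours to tensor entries is a direct structural induction on $\boldsymbol{F}$, and the passage from tensor entries to logic goes through a Dvo\v{r}\'ak-style lemma (\Cref{lem:dvo6}) that realises every $\mathsf{M}^t$-formula as an $\mathcal{L}_t^+$-quantum graph via polynomial interpolation. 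The global statement then follows in a couple of lines by summing over tuples. Your route instead establishes (1)$\Leftrightarrow$(2) by identifying the $\mathsf{mwl}^\infty$-partition with the cells of $\mathcal{A}_G^t$ and building a $t$-equivalence, thereby reusing \Cref{thm:hi-l-pos}; your (2)$\Rightarrow$(3) is essentially the paper's \Cref{lem:dvo6} rephrased inside $\mathcal{A}_G^t$; and your (3)$\Rightarrow$(1) is the same inductive definability argument as the paper's. What the paper's approach buys is the local theorem as a useful intermediate result and a shorter path that does not rely on the algebraic machinery of \Cref{sec:matrix-algebras}; what your approach buys is an explicit link between $\mathsf{mwl}$ and the cell structure of $\mathcal{A}_G^t$, which is conceptually pleasing. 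One small gap to watch: your argument that a $t$-equivalence preserves colour-class sizes needs more than $\phi(J)=J$ and ``indicator matrices go to indicator matrices''---you also need that $\phi$ is sum-preserving, which in turn requires trace-preservation. The paper handles this (in the proof of \Cref{thm:hi-l-pos}) by invoking \cite[Lemma~A.3]{mancinska_graph_2020}; you should do the same, and your reference to \Cref{rem:num-vertices} is misplaced since that remark concerns isomorphism maps $\Phi$, not $t$-equivalences $\phi$.
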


    The proof of \cref{thm:l+logic-global} is conceptually similar to arguments of \cite{cai_optimal_1992,dvorak_recognizing_2010}.
    It is implied by the following \cref{thm:l+logic}:

    \begin{theorem} \label{thm:l+logic}
		Let $t \geq 1$.
		For graphs $G$ and $H$ with $\boldsymbol{r},\boldsymbol{s} \in V(G)^t$ and $\boldsymbol{u},\boldsymbol{v} \in V(H)^t$, the following are equivalent:
		\begin{enumerate}
			\item $\mathsf{mwl}_G^\infty(\boldsymbol{r}\boldsymbol{s}) = \mathsf{mwl}_H^\infty(\boldsymbol{u}\boldsymbol{v})$,\label{thm:l+logic1}
			\item $\boldsymbol{F}_G(\boldsymbol{r}\boldsymbol{s}) = \boldsymbol{F}_H(\boldsymbol{u}\boldsymbol{v})$ for all $\boldsymbol{F} \in \mathcal{L}_t^+$, and\label{thm:l+logic2}
			\item $G \models \phi(\boldsymbol{r}\boldsymbol{s})$ if and only if $H \models \phi(\boldsymbol{u}\boldsymbol{v})$ for all $\mathsf{M}^t$-formulae $\phi$.\label{thm:l+logic3}
		\end{enumerate}
	\end{theorem}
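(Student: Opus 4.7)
The plan is to prove the equivalences by establishing (1) $\Leftrightarrow$ (2) and (1) $\Leftrightarrow$ (3). The three characterisations share a common recursive structure: $\mathsf{mwl}^i$ refines via a half-step that tuples up colours under $\mathfrak{S}_{2t}$ and then a multiset over intermediate $t$-tuples, $\mathcal{L}_t^+$ is generated by parallel composition, series composition, and the $\mathfrak{S}_{2t}$-action on labels, and $\mathsf{M}^t$ is built from atoms, Boolean connectives, and the two-tuple quantifier $\exists^{\geq n}\boldsymbol{y}.\,\phi(\boldsymbol{u},\boldsymbol{y})\wedge\psi(\boldsymbol{y},\boldsymbol{w})$. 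Each implication translates between these matching layers.

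For (1) $\Rightarrow$ (2) I would proceed by structural induction on $\boldsymbol{F}\in\mathcal{L}_t^+$ as generated by the operations in \cref{def:l-lplus}. For atomic $\boldsymbol{F}\in\mathcal{A}_t$, the entry $\boldsymbol{F}_G(\boldsymbol{r},\boldsymbol{s})\in\{0,1\}$ depends only on $\rel_G(\boldsymbol{r}\boldsymbol{s})=\mathsf{mwl}^0_G(\boldsymbol{r},\boldsymbol{s})$, which is preserved by equality of the stabilised colours. Parallel composition corresponds to Schur product of entries, so the inductive hypothesis passes through directly. For series composition $\boldsymbol{F}_1\cdot\boldsymbol{F}_2$ the entry is $\sum_{\boldsymbol{t}}(\boldsymbol{F}_1)_G(\boldsymbol{r},\boldsymbol{t})(\boldsymbol{F}_2)_G(\boldsymbol{t},\boldsymbol{s})$; unfolding the recursive definition of $\mathsf{mwl}^i$ turns equality of stabilised colours into a colour-preserving bijection $\pi\colon V(G)^t\to V(H)^t$ matching the half-step colours of $(\boldsymbol{r},\boldsymbol{t})\leftrightarrow(\boldsymbol{u},\pi(\boldsymbol{t}))$ and $(\boldsymbol{t},\boldsymbol{s})\leftrightarrow(\pi(\boldsymbol{t}),\boldsymbol{v})$, and the inductive hypothesis equates summands pairwise. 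Label permutations are handled by the half-step $\mathsf{mwl}^{i-1/2}$.

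For (2) $\Rightarrow$ (1) I would build, by induction on the round $i$, for every attainable colour $c$, a $\mathbb{Q}$-linear combination of $\mathcal{L}_t^+$-entries at $(\boldsymbol{u},\boldsymbol{w})$ that evaluates to the indicator $[\mathsf{mwl}^i_G(\boldsymbol{u},\boldsymbol{w})=c]$. The base case ($i=0$) uses Boolean products of atomic entries $\boldsymbol{I}^{ij}_G,\boldsymbol{A}^{ij}_G$, with $\boldsymbol{J}_G$ accommodating negations; such Boolean products are themselves entries of parallel compositions. The half-step is captured via closure of $\mathcal{L}_t^+$ under the $\mathfrak{S}_{2t}$-action. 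For the multiset refinement, the count $\#\{\boldsymbol{t}:\mathsf{mwl}^{i-1/2}_G(\boldsymbol{r},\boldsymbol{t})=c_1,\ \mathsf{mwl}^{i-1/2}_G(\boldsymbol{t},\boldsymbol{s})=c_2\}$ expands, via the inductive hypothesis, into $\sum_{k,l}\alpha_k\beta_l(\boldsymbol{F}^1_k\cdot\boldsymbol{F}^2_l)_G(\boldsymbol{r},\boldsymbol{s})$, a $\mathbb{Q}$-linear combination of $\mathcal{L}_t^+$-entries at $(\boldsymbol{r},\boldsymbol{s})$ by closure under series composition. The indicator that this integer-valued count hits any specific value is a $\mathbb{Q}$-polynomial in it via Lagrange interpolation over the finite range $\{0,1,\ldots,n^t\}$, and collapses again into a $\mathbb{Q}$-linear combination of $\mathcal{L}_t^+$-entries using that products of entries are entries of parallel compositions. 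Hence if $\mathcal{L}_t^+$-entries agree at $(\boldsymbol{r},\boldsymbol{s})$ and $(\boldsymbol{u},\boldsymbol{v})$, all colour indicators agree and so do the stabilised colours.

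For (1) $\Leftrightarrow$ (3) I would construct, by induction on $i$, an $\mathsf{M}^t$-formula $\chi^i_c(\boldsymbol{u},\boldsymbol{w})$ such that $G\models\chi^i_c(\boldsymbol{r},\boldsymbol{s})$ iff $\mathsf{mwl}^i_G(\boldsymbol{r},\boldsymbol{s})=c$. The base case is a Boolean combination of atomic formulas encoding $\rel$. The half-step is captured by a Boolean combination of the formulas $\chi^{i-1}_{c'}(\sigma(\boldsymbol{u},\boldsymbol{w}))$ for $\sigma\in\mathfrak{S}_{2t}$, which permute only the $2t$ free variables and so remain within the $3t$-variable budget. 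The multiset refinement is captured precisely by the two-tuple quantifier $\exists^{\geq n}\boldsymbol{y}.\,\chi^{i-1/2}_{c_1}(\boldsymbol{u},\boldsymbol{y})\wedge\chi^{i-1/2}_{c_2}(\boldsymbol{y},\boldsymbol{w})$, conjoined over all thresholds and colour pairs; here $\boldsymbol{y}$ safely reuses the variable slots that $\boldsymbol{w}$ vacates inside $\chi^{i-1/2}_{c_1}(\boldsymbol{u},\boldsymbol{y})$. Since $\mathsf{mwl}$ stabilises in finitely many rounds, a single $\mathsf{M}^t$-formula captures the stable colour, and both directions (1) $\Rightarrow$ (3) and (3) $\Rightarrow$ (1) follow at once. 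I expect the main obstacle to be the polynomial-in-entries bookkeeping in (2) $\Rightarrow$ (1), especially the verification that thresholding of integer counts and subsequent Schur multiplications stay inside $\mathcal{L}_t^+$ — this is precisely where the unrestricted parallel composition allowed in $\mathcal{L}_t^+$ (and not in $\mathcal{L}_t$) becomes indispensable, mirroring the algebraic role of the non-negativity constraints in the Lasserre hierarchy.
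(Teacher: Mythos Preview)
Your approach uses the same core techniques as the paper---structural induction on $\boldsymbol{F}$ for $(1)\Rightarrow(2)$, Lagrange interpolation for thresholding integer counts, and CFI-style colour-characterising formulas---but organised differently. The paper proves the cycle $(1)\Rightarrow(2)\Rightarrow(3)\Rightarrow(1)$: it packages the interpolation into a single lemma (\cref{lem:dvo6}) that builds, for each $\mathsf{M}^t$-formula $\phi$, an $\mathcal{L}_t^+$-quantum graph whose entry is the indicator of $\phi$, which yields $(2)\Rightarrow(3)$ directly; then $(3)\Rightarrow(1)$ proceeds essentially as your $\chi^i_c$ construction. Your direct route $(2)\Rightarrow(1)$ applies the same interpolation idea to colour indicators instead of formula indicators, which is equally valid and buys you a self-contained equivalence between $\mathsf{mwl}$ and $\mathcal{L}_t^+$ without going through the logic.

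There is, however, a genuine gap in your claim that the $\chi^i_c$ construction yields both directions of $(1)\Leftrightarrow(3)$ ``at once''. Colour-definability gives $(3)\Rightarrow(1)$ immediately, but not $(1)\Rightarrow(3)$: knowing that each $\mathsf{mwl}$-colour class is $\mathsf{M}^t$-definable does not by itself show that the truth value of an \emph{arbitrary} $\mathsf{M}^t$-formula is determined by the colour. You still need a separate induction on the structure of $\phi$ (atomic case via $\rel$, Boolean connectives trivially, and the quantifier $\exists^{\geq n}\boldsymbol{y}.\,\psi\wedge\chi$ via the same multiset bijection you already used for series composition in $(1)\Rightarrow(2)$). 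The paper sidesteps this extra induction by routing $(1)\Rightarrow(3)$ through $(2)$ and \cref{lem:dvo6}, which is arguably cleaner since the interpolation lemma has already done the work.
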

	
	The proof of \cref{thm:l+logic} is based on the following lemma, which is adopted from \cite[Lemma~6]{dvorak_recognizing_2010}.
	An \emph{$\mathcal{L}_t^+$-quantum graph} is a finite linear combination $q = \sum \alpha_i \boldsymbol{F}^i$ of graphs $\boldsymbol{F}^i \in \mathcal{L}_t^+$ with real coefficients $\alpha_i \in \mathbb{R}$.
	Operations like series or parallel composition can be extended linearly to quantum graphs. Write $q_G$ for the linear combination $\sum \alpha_i \boldsymbol{F}^i_G$ of homomorphism tensors.

	\begin{lemma} \label{lem:dvo6}
		Let $t \geq 1$ and $n \in \mathbb{N}$.
		For every $\mathsf{M}^t$-formula $\phi$,
		there exists an $\mathcal{L}_t^+$-quantum graph $q$ such that for all graphs $G$ on at most $n$ vertices and $\boldsymbol{r},\boldsymbol{s} \in V(G)^t$,
		\begin{itemize}
			\item if $ G \models \phi(\boldsymbol{rs})$ then $q_G(\boldsymbol{r}\boldsymbol{s}) = 1$, and
			\item if $ G \not\models \phi(\boldsymbol{rs})$ then $q_G(\boldsymbol{r}\boldsymbol{s}) = 0$.
		\end{itemize}
	\end{lemma}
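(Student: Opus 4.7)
The plan is to prove the lemma by structural induction on $\phi \in \mathsf{M}^t$. Throughout, I maintain the invariant that for a formula with its free variables distributed over $2t$ positions (the first $t$ corresponding to in-labels, the remaining $t$ to out-labels), the constructed indicator is a $(t,t)$-bilabelled $\mathcal{L}_t^+$-quantum graph $q$ whose $(\boldsymbol{r},\boldsymbol{s})$-entry is $\mathbf{1}[G \models \phi(\boldsymbol{r}\boldsymbol{s})]$. Free variables appearing in fewer than $2t$ positions cause no issue since the atomic graphs introduced in \cref{def:atomic} carry all $2t$ labels and simply leave the non-constrained positions unrestricted.

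For the base case of atomic formulas, $\phi = (x_i = x_j)$ corresponds to the atomic graph $\boldsymbol{I}^{ij}$ and $\phi = E x_i x_j$ corresponds to $\boldsymbol{A}^{ij}$ (after relabelling positions so that the intended free variables occupy the intended in- and out-slots). Their homomorphism tensors directly realise the indicators $\mathbf{1}[(\boldsymbol{rs})_i = (\boldsymbol{rs})_j]$ and $\mathbf{1}[(\boldsymbol{rs})_i (\boldsymbol{rs})_j \in E(G)]$ respectively, and both lie in $\mathcal{A}_t \subseteq \mathcal{L}_t^+$.

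For the Boolean connectives, the indicator for $\neg \phi$ is $\boldsymbol{J} - q_\phi$ (using $\boldsymbol{J}_G = J$); the indicator for $\phi \wedge \psi$ is the Schur product $q_\phi \odot q_\psi$, which is the algebraic counterpart of parallel composition and coincides with pointwise multiplication of $\{0,1\}$-valued tensors; and $\phi \vee \psi$ is handled via $\neg(\neg\phi \wedge \neg\psi)$. The key point is that $\mathcal{L}_t^+$ is closed under parallel composition of arbitrary elements (\cref{def:l-lplus}), not merely composition with atomic graphs as in $\mathcal{L}_t$, so $q_\phi \odot q_\psi$ remains an $\mathcal{L}_t^+$-quantum graph.

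The critical step is the counting quantifier $\phi = \exists^{\geq n} \boldsymbol{y}.\ \alpha(\boldsymbol{x},\boldsymbol{y}) \wedge \beta(\boldsymbol{y},\boldsymbol{z})$. By induction, I obtain indicators $q_\alpha,q_\beta \in \mathcal{L}_t^+$, aligned so that the out-labels of $q_\alpha$ and the in-labels of $q_\beta$ both correspond to $\boldsymbol{y}$. Their series composition satisfies
\[
(q_\alpha \cdot q_\beta)_G(\boldsymbol{r},\boldsymbol{z}) \;=\; \#\bigl\{\boldsymbol{s} \in V(G)^t : G \models \alpha(\boldsymbol{r}\boldsymbol{s}) \wedge \beta(\boldsymbol{s}\boldsymbol{z})\bigr\},
\]
an integer in $\{0,1,\ldots, n^t\}$. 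A Lagrange interpolation yields a polynomial $p(X) \in \mathbb{Q}[X]$ with $p(c) = \mathbf{1}[c \geq n]$ for every $c \in \{0,\ldots,n^t\}$; writing $p(X) = \sum_k a_k X^k$, the $\mathcal{L}_t^+$-quantum graph $\sum_k a_k (q_\alpha \cdot q_\beta)^{\odot k}$, with $q^{\odot 0} := \boldsymbol{J}$, realises the indicator of $\phi$. The main obstacle is precisely this Schur-power step: iterating $\odot$ on the non-atomic quantum graph $q_\alpha \cdot q_\beta$ requires parallel composition of arbitrary elements of $\mathcal{L}_t^+$, which is exactly what distinguishes $\mathcal{L}_t^+$ from $\mathcal{L}_t$ and aligns the present lemma with the Vandermonde interpolation remark following \cref{def:l-lplus}.
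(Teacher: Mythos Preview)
Your proposal is correct and follows essentially the same structural induction as the paper's proof: atomic formulas via atomic graphs in $\mathcal{A}_t$, negation via $\boldsymbol{J}-q$, conjunction via Schur product, disjunction via De Morgan, and the counting quantifier via series composition followed by polynomial interpolation applied through Schur powers. Your added emphasis on why closure of $\mathcal{L}_t^+$ under arbitrary parallel composition (as opposed to $\mathcal{L}_t$) is what makes the interpolation step go through is a nice clarification; the only cosmetic point is that you overload the symbol $n$ for both the vertex count and the quantifier threshold, which the paper avoids by writing $\exists^{\geq \ell}$.
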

	\begin{proof}
		If $\phi$ is a quantifier-free formula then there exists an atomic graph $\boldsymbol{F} \in \mathcal{A}_t$ such that $G \models \phi(\boldsymbol{r}\boldsymbol{s})$ if and only if $\boldsymbol{F}_G(\boldsymbol{r}\boldsymbol{s}) = 1$. Furthermore,  the homomorphism tensor of any atomic graph has entries from $\{0,1\}$.
		
		If $\phi$ is of the form $\neg \psi$, let $q$ denote $\mathcal{L}_t^+$-quantum graph constructed inductively for $\psi$.
		Then $r \coloneqq \boldsymbol{J} - q$ for $\boldsymbol{J}$ as defined in \cref{def:atomic} is an $\mathcal{L}_t^+$-quantum graph as desired.

		If $\phi$ is of the form $\psi \land \chi$ where $\psi$ and $\chi$ have the same free variables as $\phi$, let $q$ and $r$ denote $\mathcal{L}_t^+$-quantum graphs constructed inductively for $\psi$ and $\chi$, respectively.
		Then $s \coloneqq q \odot r$ is an $\mathcal{L}_t^+$-quantum graph as desired.
		
		If $\phi$ is of the form $\psi \lor \chi$ where $\psi$ and $\chi$ have the same free variables as $\phi$, it is equivalent to $\neg(\neg \psi \land \neg \chi)$ and the two previous cases can be applied jointly.
		
		It remains to consider the case in which $\phi$ is of the form $\exists^{\geq \ell} \boldsymbol{y}.\ \psi(\boldsymbol{x},\boldsymbol{y}) \land \chi(\boldsymbol{y},\boldsymbol{z})$.
		Let $q$ and $r$ denote the $\mathcal{L}_t^+$-quantum graphs constructed inductively for $\psi$ and $\chi$, respectively. Then
		$
		(q \cdot r)_G(\boldsymbol{r},\boldsymbol{s}) = \sum_{\boldsymbol{t} \in V(G)^t} q_G(\boldsymbol{r},\boldsymbol{t}) r_G(\boldsymbol{t},\boldsymbol{s})
		$
		is equal to the number of elements $\boldsymbol{t} \in V(G)^t$ such that $G \models \psi(\boldsymbol{r},\boldsymbol{t}) \land \chi(\boldsymbol{t},\boldsymbol{s})$.
		Let $P = \sum c_i x^i \in \mathbb{R}[x]$ be a polynomial which evaluates to $0$ on $\{0,1, \dots, \ell-1\}$ and to $1$ on $\{\ell,\ell+1, \dots, n^t\}$.
		Then the quantum graph $P(q \cdot r) = \sum c_i (q \cdot r)^{\odot i}$ where $(q \cdot r)^{\odot i}$ denotes the parallel composition of $i$~copies of $q \cdot r$ is as desired.
	\end{proof}

	\begin{proof}[Proof of Theorem \ref{thm:l+logic}]
		Supposing \cref{thm:l+logic1}, \cref{thm:l+logic2} is proven by induction on the structure of $\boldsymbol{F}$.
		If $\boldsymbol{F}$ is atomic then the statement follows from $\rel_G(\boldsymbol{r},\boldsymbol{s}) = \rel_H(\boldsymbol{u},\boldsymbol{v})$.
		For $\boldsymbol{F}  = \boldsymbol{K} \odot \boldsymbol{L}$ and $\boldsymbol{F} = \boldsymbol{K}^\sigma$ with $\boldsymbol{K},\boldsymbol{L} \in \mathcal{L}_t^+$, the statement is easily verified.
		It remains to consider the case $\boldsymbol{F}  = \boldsymbol{K} \cdot \boldsymbol{L}$. By definition of $\mathsf{mwl}$, there exists a bijection $\pi \colon V(G)^t \to V(H)^t$ such that 
		\[
			\mathsf{mwl}_G^\infty(\boldsymbol{r}, \boldsymbol{t}) = \mathsf{mwl}_H^\infty(\boldsymbol{u}, \pi(\boldsymbol{t})) \quad \text{and} \quad
			\mathsf{mwl}_G^\infty(\boldsymbol{t}, \boldsymbol{s}) = \mathsf{mwl}_H^\infty(\pi(\boldsymbol{t}), \boldsymbol{v})
		\]
		for all $\boldsymbol{t} \in V(G)^t$. Hence,
		\[
			\boldsymbol{F}_G(\boldsymbol{r}, \boldsymbol{s})
			= \sum_{\boldsymbol{t} \in V(G)^t} \boldsymbol{K}_G(\boldsymbol{r}, \boldsymbol{t}) \boldsymbol{L}_G(\boldsymbol{t}, \boldsymbol{s})
			= \sum_{\boldsymbol{t} \in V(G)^t} \boldsymbol{K}_H(\boldsymbol{u}, \pi(\boldsymbol{t})) \boldsymbol{L}_H(\pi(\boldsymbol{t}), \boldsymbol{v})
			= \boldsymbol{F}_H(\boldsymbol{u}, \boldsymbol{v}).
		\]
		Thus, \cref{thm:l+logic2} holds.
		
		Now suppose that  \cref{thm:l+logic2} holds. 
		If $\phi$ is a $\mathsf{M}^t$-formula such that $G \models \phi(\boldsymbol{r},\boldsymbol{s})$ and $H \not\models \phi(\boldsymbol{u},\boldsymbol{v})$
		then, by \cref{lem:dvo6}, there exists a graph $\boldsymbol{F} \in \mathcal{L}_t^+$ such that $\boldsymbol{F}_G(\boldsymbol{r},\boldsymbol{s}) \neq \boldsymbol{F}_H(\boldsymbol{u},\boldsymbol{v})$. This yields \cref{thm:l+logic3}.
		
		That \cref{thm:l+logic3} implies \cref{thm:l+logic1} is proven similarly as \cite[Theorem~5.2]{cai_optimal_1992} by induction on the number of iterations.
		Since $\rel_G$ and $\rel_H$ can be defined using quantifier-free $\mathsf{M}^t$-formulae, $\mathsf{mwl}_G^0(\boldsymbol{r}, \boldsymbol{s}) = \mathsf{mwl}_H^0(\boldsymbol{u}, \boldsymbol{v})$.
		
		Since $\mathsf{M}^t$ is closed under permuting the names of the variables, it holds for all $\sigma \in \mathfrak{S}_{2t}$ that
		$G \models \phi(\boldsymbol{r}\boldsymbol{s}) \iff   H \models \phi(\boldsymbol{u}\boldsymbol{v})$ for all $\phi \in \mathsf{M}^t$ with $2t$ free variables
		if and only if
		$G \models \phi(\sigma(\boldsymbol{r}\boldsymbol{s})) \iff H \models \phi(\sigma(\boldsymbol{u}\boldsymbol{v}))$ for all $\phi \in \mathsf{M}^t$ with $2t$ free variables.
		Hence, if $\mathsf{mwl}_G^{i}(\boldsymbol{r}\boldsymbol{s}) = \mathsf{mwl}_H^{i}(\boldsymbol{u}\boldsymbol{v})$ for some $i \in \mathbb{N}$ then also
		$\mathsf{mwl}_G^{i+1/2}(\boldsymbol{r}\boldsymbol{s}) = \mathsf{mwl}_H^{i+1/2}(\boldsymbol{u}\boldsymbol{v})$.

        For the step from $i+1/2$ to $i+1$, 
		suppose contrapositively that $\mathsf{mwl}_G^{i+1}(\boldsymbol{r}\boldsymbol{s}) \neq \mathsf{mwl}_H^{i+1}(\boldsymbol{u}\boldsymbol{v})$.
		By the previous argument, it can be supposed that $\mathsf{mwl}_G^{i+1/2}(\boldsymbol{r}\boldsymbol{s}) = \mathsf{mwl}_H^{i+1/2}(\boldsymbol{u}\boldsymbol{v})$.
		Hence, there exists a pair of colours $\left(\mathsf{mwl}_G^{i+1/2}(\boldsymbol{r}\boldsymbol{t}), \mathsf{mwl}_G^{i+1/2}(\boldsymbol{t}\boldsymbol{s})\right)$ which appears in the multisets for $G$ and $H$ differently often, wlog more often in $G$ than in $H$. 
        By the inductive hypothesis, 
        for each pair of distinct $\mathsf{mwl}^{i+1/2}$-colours there exists an $\mathsf{M}^t$-formula $\phi$ which is satisfied by all vertex tuples of the first colour and by none of the second colour. 
        By taking the conjunction of several such formulae, a formula can be constructed which holds for a $2t$-tuple of vertices of $G$ or $H$ if and only if they have a specified colour in $\mathsf{mwl}^{i+1/2}$.
		Let $\phi$ and $\psi$ be formulae which hold exactly for the $2t$-tuples of vertices of $G$ or $H$ of colours $\mathsf{mwl}_G^{i+1/2}(\boldsymbol{r}\boldsymbol{t})$ and $\mathsf{mwl}_G^{i+1/2}(\boldsymbol{t}\boldsymbol{s})$, respectively. 
        By assumption, there is an $N \in \mathbb{N}$ such that the formula $\chi \coloneqq \exists^{\geq N} \boldsymbol{y}.\ \phi(\boldsymbol{x},\boldsymbol{y}) \land \psi(\boldsymbol{y},\boldsymbol{z}) \in \mathsf{M}^t$ is such that $G \models \chi(\boldsymbol{r}\boldsymbol{s})$ and $H \not\models \chi(\boldsymbol{u}\boldsymbol{v})$. This yields \cref{thm:l+logic1}.
	\end{proof}

    Finally, we derive \cref{thm:l+logic-global} from \cref{thm:l+logic}.
	
	\begin{proof}[Proof of Theorem \ref{thm:l+logic-global}]
		Supposing \cref{cor:l+logic1}, let $\pi \colon V(G)^{2t} \to V(H)^{2t}$ be a bijection such that
		$
			\mathsf{mwl}_G^\infty(\boldsymbol{rs}) = \mathsf{mwl}_H^\infty(\pi(\boldsymbol{rs}))
		$
		for all $\boldsymbol{r},\boldsymbol{s} \in V(G)^t$.
		By \cref{thm:l+logic}, for $\boldsymbol{F} = (F, \boldsymbol{u}, \boldsymbol{v}) \in \mathcal{L}_t^+$,
		\[
			\hom(F, G) = 
            \soe(\boldsymbol{F}_G )
            = \sum_{\boldsymbol{r},\boldsymbol{s} \in V(G)^t} \boldsymbol{F}_G(\boldsymbol{r},\boldsymbol{s})
			= \sum_{\boldsymbol{r},\boldsymbol{s} \in V(G)^t} \boldsymbol{F}_H(\pi(\boldsymbol{rs}))
		    = \soe(\boldsymbol{F}_H)
            = \hom(F, H),
		\]
		so \cref{cor:l+logic2} holds.

        Assuming \cref{cor:l+logic2} holds,
		let $\Phi = \exists^{\geq \ell} \boldsymbol{x}.\ \phi(\boldsymbol{x})$ be an $\mathsf{M}^t$-sentence where $\phi$ is an $\mathsf{M}^t$-formula, $\ell \in \mathbb{N}$ and $\boldsymbol{x}$ is a tuple of $2t$ distinct variables.
		Let $q$ denote the $\mathcal{L}_t^+$-quantum graph constructed for $\phi$ and $n \coloneqq \max\{|V(G)|, |V(H)|\}$ via \cref{lem:dvo6}.
		Then \cref{cor:l+logic2} implies that 
        \[
            \left\lvert \{ \boldsymbol{rs} \in V(G)^{2t} \mid G \models \phi(\boldsymbol{rs}) \} \right\rvert 
            = \sum_{\boldsymbol{rs} \in V(G)^{2t}} q_G(\boldsymbol{rs}) 
            = \soe(q_G)
            = \left\lvert \{ \boldsymbol{uv} \in V(H)^{2t} \mid H \models \phi(\boldsymbol{uv}) \} \right\rvert.
        \]
        Hence, $G \models \Phi$ if and only if $H \models \Phi$.
        This yields \cref{cor:l+logic3}.

        Assuming \cref{cor:l+logic3} holds,
		suppose that $G$ and $H$ are distinguished by the $\mathsf{mwl}$ algorithm and let $C \subseteq V(G)^{2t}$ denote an $\mathsf{mwl}$-colour class in $G$ whose counterpart $D \subseteq V(H)^{2t}$ has different size. 
		By \cref{thm:l+logic}, there exists an $\mathsf{M}^t$-formula $\phi$ which is satisfied by tuples in $C$ and $D$ and by no other tuples.
		The $\mathsf{M}^t$-sentence $\exists^{\geq \ell} \boldsymbol{x}.\ \phi(\boldsymbol{x})$ is not satisfied by both $G$ and $H$ for a suitable $\ell \in \mathbb{N}$.
        This yields \cref{cor:l+logic1}.
	\end{proof}

    It would be desirable to extend \cref{thm:main5} to $\simeq_t^{\textup{L}}$. 
    The key property of the graph class $\mathcal{L}_t^+$ which was exploited in the proof of \cref{thm:l+logic-global} is that $\mathcal{L}_t^+$ is closed under arbitrary parallel compositions.
    Therefore, an interpolation argument in the proof of \cref{lem:dvo6} succeeds to reduce testing homomorphism indistinguishability over $\mathcal{L}_t^+$ to the execution of the $\mathsf{mwl}$-colouring  algorithm.
    However, for $\mathcal{L}_t$, arbitrary parallel compositions are not available.
    Thus, designing a suitable colouring algorithm for this graph classes does not seem feasible.

    \section{Conclusion}

    We have established a characterisation of the feasibility of the level-$t$ Lasserre relaxation with and without non-negativity constraints of the integer program $\ISO(G, H)$ for graph isomorphism in terms of homomorphism indistinguishability over the graph classes $\mathcal{L}_t$ and $\mathcal{L}_t^+$. By analysing the treewidth of the graphs $\mathcal{L}_t$ and $\mathcal{L}_t^+$ and invoking results from the theory of homomorphism indistinguishability, we have determined the precise number of Sherali--Adams levels necessary such that their feasibility guarantees the feasibility of the level-$t$ Lasserre relaxation. This concludes a line of research brought forward in \cite{atserias_definable_2023}. For feasibility of the level-$t$ Lasserre relaxation with non-negativity constraints, we have given, besides linear algebraic reformulations generalising the adjacency algebra of a graph, a polynomial time algorithm deciding this property. 

    Missing in \Cref{thm:main} is a tight lower bound on the number of Lasserre levels necessary to ensure feasibility of a given Sherali--Adams level:
    \begin{question} \label{question:conclusion}
        Do there exist for every $t \geq 3$ graphs $G$ and $H$ such that $G \simeq_{t-1}^{\textup{L}} H$ and $G \not\simeq_{t}^{\textup{SA}} H$?
    \end{question}
    Following the path taken in this paper, this question could potentially be resolved in two steps: Firstly, one would need to prove the graph theoretic assertion that the class $\mathcal{L}_t$ does not contain $\mathcal{TW}_t$ for all $t \geq 2$. Secondly, one would need to show that $\mathcal{L}_t$ is homomorphism distinguishing closed or at least that the homomorphism distinguishing closure \cite{roberson_oddomorphisms_2022} of $\mathcal{L}_t$ does not contain $\mathcal{TW}_t$ for all $t \geq 2$. 
    Given the means currently available for proving such a statement \cite{roberson_oddomorphisms_2022,neuen_homomorphism-distinguishing_2023}, 
    this would involve giving game characterisations for $\mathcal{L}_t$ (mimicking the robber-cops game for $\mathcal{TW}_t$) and for $\equiv_{\mathcal{L}_t}$ (similar to the bijective $(t+1)$-pebble game for $\mathcal{TW}_t$). For the former, finding analogies to the notions of brambles or heavens seems necessary~\cite{seymour_graph_1993}.

    Another interesting extension of our work might be an efficient algorithm for computing an explicit partial $t$-equivalence between two graphs, cf.\  \cref{def:pce,def:ce}, or deciding that no such map exists.
    This would yield an efficient algorithm for deciding the exact feasibility of the Lasserre semidefinite program without non-negativity constraints, cf.\  \cite{atserias_definable_2023}.

\newpage

\printbibliography

	\appendix
	\section{Versions of the Lasserre Hierarchy}
	\label{app:lasserre}
	
	The level-$t$ Lasserre relaxation for graph isomorphism studied in \cite{atserias_definable_2023} 
	has a slightly different form.
	For every $\alpha \in \mathbb{N}^{V(G) \times V(H)}$, it comprises a real-valued variable $z_\alpha$.
	For an integer $t \in \mathbb{N}$, write $M_t(z)$ for the matrix whose rows and columns are indexed by $\alpha \in \mathbb{N}^{V(G) \times V(H)}$ such that $|\alpha| \coloneqq \sum_{gh \in V(G) \times V(H)} \alpha_{gh} \leq t$ with $(\alpha, \beta)$-th entry $z_{\alpha + \beta}$. 
	Abusing notation by writing $gh$ for the $gh$-th standard basis vector in $\mathbb{N}^{V(G) \times V(H)}$, the equations can be written as
	\begin{align}
		\label{ao1} M_t(z) &\succeq 0 && \\
		\label{ao2} \sum_{g \in V(G)} z_{\alpha + gh} &= z_\alpha && \text{for all } \alpha \text{ such that } |\alpha| \leq 2t - 2, \\
		\label{ao3} \sum_{h \in V(H)} z_{\alpha + gh} &= z_\alpha && \text{for all } \alpha \text{ such that } |\alpha| \leq 2t - 2, \\
		\label{ao4} z_\alpha &= 0 && \text{for all } \alpha \text{ such that } \alpha \text{ is not a partial isomorphism},\\
		\label{ao5} z_{\alpha + 2gh} &= z_{\alpha +gh}   && \text{for all } \alpha \text{ such that } |\alpha| \leq 2t - 2,\\
		\label{ao6} (z_{\alpha + \beta + gh})_{|\alpha|, |\beta| \leq 2t - 2} & \succeq 0 &&  \text{for all } gh \in V(G) \times V(H), \\
		\label{ao7} (z_{\alpha + \beta} - z_{\alpha + \beta + gh})_{|\alpha|, |\beta| \leq 2t - 2} & \succeq 0 &&\text{for all } gh \in V(G) \times V(H),  \\
		\label{ao8} z_\emptyset &= 1.  &&
	\end{align}

The vector $\alpha$ is a \emph{partial isomorphism} if and only if 
$\rel_G(gg') = \rel_H(hh')$ for all $gh$ and $g'h'$ such that $\alpha_{gh}, \alpha_{g'h'} > 0$.
	
	\begin{lemma} \label{lem:systems}
		The system \cref{ao1,ao1,ao2,ao3,ao4,ao5,ao6,ao7,ao8} has a solution iff 
		the system \cref{lassere1,lassere2,lassere3,lassere4,lassere5} has a solution.
	\end{lemma}
	\begin{proof}
		Let $(y_I)_{I \in \binom{V(G)\times V(H)}{\leq 2t}}$ be a solution to \cref{lassere1,lassere2,lassere3,lassere4,lassere5}.
		For $\alpha \in \mathbb{N}^{V(G) \times V(H)}$ with $|\alpha| \leq 2t$,
		define $z_\alpha$ to be $y_{[\alpha]}$ where $[\alpha] \in \binom{V(G)\times V(H)}{\leq 2t}$ is the set $\{gh \mid \alpha_{gh} \geq 1\}$. Observe that $[\alpha + \beta] = [\alpha] \cup [\beta]$ for all $\alpha, \beta \in \mathbb{N}^{V(G) \times V(H)}$.
		
		By \cref{lassere1}, let $v_I$ for $I \in \binom{V(G)\times V(H)}{\leq t}$ be real vectors such that $y_{I \cup J} = \left< v_I, v_J \right>$ for all $I, J \in \binom{V(G)\times V(H)}{\leq 2t}$. Then
		$
			z_{\alpha + \beta} = y_{[\alpha + \beta]} = y_{[\alpha] \cup [\beta]} = \langle v_{[\alpha]}, v_{[\beta]} \rangle.
		$
		Hence, the vectors $v_{[\alpha]}$ for $\alpha \in \mathbb{N}^{V(G) \times V(H)}$ with $|\alpha| \leq 2t$ witness \cref{ao1}.
		Furthermore,
		\[
			z_{\alpha + \beta + gh} = y_{[\alpha + \beta + gh]} = y_{[\alpha] \cup \{gh \} \cup [\beta] \cup \{gh\}} = \langle v_{[\alpha + gh]}, v_{[\beta +gh]} \rangle.
		\]
		Hence, the matrix in \cref{ao6} is positive semidefinite. For \cref{ao7}, similarly,
		\[
		z_{\alpha + \beta} - z_{\alpha + \beta + gh} 
		= z_{\alpha + \beta} + z_{\alpha + \beta + gh} - z_{\alpha + \beta + gh} - z_{\alpha + \beta + gh}
		= \langle v_{[\alpha]} - v_{[\alpha +gh]}, v_{[\beta]} - v_{[\beta +gh]} \rangle.
		\]
		\Cref{ao2,ao3,ao4,ao8} follow from \cref{lassere2,lassere3,lassere4,lassere5}, respectively.
		\Cref{ao5} holds by definition since $[\alpha + 2gh] = [\alpha +gh]$.

		Conversely, let $(z_\alpha)$ be a solution to \cref{ao1,ao1,ao2,ao3,ao4,ao5,ao6,ao7,ao8}.
		Define $y_I$ for $I \in \binom{V(G)\times V(H)}{\leq 2t}$ as $z_{\delta_I}$ where $\delta_I \in \mathbb{N}^{V(G) \times V(H)}$ is the indicator vector of $I$, i.e.\  $(\delta_I)_{gh} = 1$ if $gh \in I$ and $(\delta_I)_{gh} = 0$ otherwise.
		By \cref{ao5}, $z_{\delta_{I \cup J}} = z_{\delta_I + \delta_J}$ for all $I, J \in \binom{V(G)\times V(H)}{\leq 2t}$.
		Hence, \cref{lassere2,lassere3,lassere4,lassere5} follow from \Cref{ao2,ao3,ao4,ao8}, respectively.
		
		For \cref{lassere1}, let $v_\alpha$ for $\alpha \in \mathbb{N}^{V(G) \times V(H)}$ with $|\alpha| \leq t$ be vectors such that $z_{\alpha+\beta} = \left< v_\alpha, v_\beta \right>$ for all $\alpha, \beta$, by \cref{ao1}. Then
		$
			y_{I \cup J} = z_{\delta_I + \delta_J} = \langle v_{\delta_I}, v_{\delta_J} \rangle.
		$ for all $I, J \in \binom{V(G)\times V(H)}{\leq 2t}$. Hence, \cref{lassere1} holds.
	\end{proof}

\end{document}